\documentclass{article}  
\pdfoutput=1
\usepackage{amsmath,amssymb,amsthm,amsfonts,graphicx,color,tikz,float,tikz-3dplot,subcaption}

\newtheorem{theorem}{Theorem}[section]
\newtheorem{definition}[theorem]{Definition}

\newtheorem{problem}[theorem]{Problem}
\newtheorem{proposition}[theorem]{Proposition}
\newtheorem{lemma}[theorem]{Lemma}
\newtheorem{corollary}[theorem]{Corollary}
\newtheorem{remark}[theorem]{Remark}
\newtheorem{example}[theorem]{Example}

\newtheorem*{theorem*}{Theorem}
\newtheorem*{corollary*}{Corollary}
\newtheorem*{proposition*}{Proposition}

\begin{document}
\author{Alexander Engstr\"om \and Florian Kohl}
\title{Transfer-Matrix Methods meet Ehrhart Theory}

\maketitle
{
  \bigskip
  \footnotesize

  A.~Engstr\"om, \textsc{Aalto University
Department of Mathematics, P.O. Box 11100, FI-00076 Aalto, Finland}\par\nopagebreak
  \textit{E-mail address}, A.~Engstr\"om: \texttt{alexander.engstrom@aalto.fi}
  \medskip
  
  F.~Kohl, \textsc{Freie Universit\"at Berlin, FB Mathematik and Informatik,
Institut f\"ur Mathematik,Arnimallee 3, 14195 Berlin, Germany}\par\nopagebreak
  \textit{E-mail address}, F.~Kohl: \texttt{fkohl@math.fu-berlin.de}

}
\begin{abstract}
Transfer-matrix methods originated in physics where they were used to count the number of allowed particle states on a structure whose width $n$ is a parameter. Typically, the number of states is exponential in $n.$ One mathematical instance of this methodology is to enumerate the proper vertex colorings of a graph of growing size by a fixed number of colors.

In Ehrhart theory, lattice points in the dilation of a fixed polytope by a factor $k$ are enumerated. By inclusion-exclusion, relevant conditions on how the lattice points interact with hyperplanes are enforced. Typically, the number of points are (quasi-) polynomial in $k.$ The text-book example is that for a fixed graph, the number of proper vertex colorings with $k$ colors is polynomial in $k.$

This paper investigates the joint enumeration problem with both parameters $n$ and $k$ free. We start off with the classical graph colorings and then explore common situations in combinatorics related to Ehrhart theory. We show how symmetries can be explored to reduce calculations and explain the interactions with Discrete Geometry.
\end{abstract}

\section{Introduction}
\label{sec:Introduction}
Graph colorings have been intriguing mathematicians and computer scientists for decades. Historically, graph colorings first appeared in the context of the $4$-color conjecture. Birkhoff --- trying to prove said conjecture --- introduced what is now called the chromatic polynomial. Whitney later generalized this notion from planar graphs to arbitrary graphs, see \cite{Whitneychromatic}. Chromatic polynomials are one of the fundamental objects in algebraic graph theory with many questions about them still unanswered. For instance, in 1968 Read asked which polynomials arise as chromatic polynomials of some graph. This question remains wide open to this day. However, some progress has been made. In 2012, June Huh showed that the absolute values of the coefficients form a log-concave sequence, see \cite[Thm. 3]{Huh}, thus proving a conjecture by Rota, Heron, and Welsh. Not only classifying chromatic polynomials is extremely challenging, but also explicitly computing the coefficients turns out be $\#P$-hard, see \cite{JaegerVertigan}.

In the first part of this article, we examine proper $k$-colorings of Cartesian graph products of the form $G \times P_n$ and $G \times C_n$, where $G$ is an arbitrary graph and $P_n$ ($C_n$) is the path (cycle) graph on $n$ nodes, respectively. The motivation for this problem is twofold. 

First, it lies at the intersection of transfer-matrix methods and Ehrhart theory, both areas being interesting in their own right. Classically, transfer-matrix methods have been used to count the number of (possibly closed) walks on weighted graphs. However, transfer-matrix methods also made an appearance in seemingly unrelated areas such as calculating DNA-protein-drug binding in gene regulation \cite{Teif}, the $3$-dimensional dimer problem \cite{Ciucu}, counting graph homomorphisms \cite{LundowMarkstrom}, computing the partition function for some statistical physical models \cite{Freedman}, and determining the entropy in physical systems \cite{Friedland}. One of the big problems in these applications is that the size of the transfer matrices increases extremely fast as the size of the system increases. Therefore, one needs to either limit the size of the system or find a way of ``compactifying" these transfer matrices. In \cite{Ciucu}, Ciucu uses symmetry to reduce the size of the matrix. We follow and expand these ideas. Similar techniques have also been used by \cite{LundowMarkstrom}. Ehrhart theory is the study of integer points in polytopes and as such it appears in various disguises anytime someone tries to examine/count/find Diophantine solutions to a system of linear inequalities with bounded solution set. Moreover, it is related to algebraic geometry and commutative algebra \cite{CLS, StanleyGreenBook,BrunsHerzog}, optimization  \cite{BeckPixton,sturmfels1996}, number theory \cite{BeckEtAl-GorensteinLHC, BeckKohl, Pommersheim93}, combinatorics \cite{StanleyGreenBook, CCD}, and --- for this article most importantly --- 	to proper graph colorings \cite{BeckZaslavsky06}.

Second, this problem also has direct applications to physics: If $G$ represents a molecular structure, then $G \times P_n$ corresponds to several connected layers of that molecular structure. The $k$ colors correspond to $k$ different states of the atoms. Counting the number of possible combinations is the same as counting the number of colorings. If we furthermore assume that two adjacent atoms are not allowed to be in the same state, we arrive at a classical proper coloring problem. Since $n$ is very large in physical systems and also $k$ may vary, the (doubly or single) asymptotic behavior is of interest.

Our work combines transfer-matrix methods with Ehrhart theory. As an intermediate step, we examine proper colorings of a graph, where some nodes are already colored. We call these colorings \emph{restricted colorings}. The associated counting function is a polynomial and it satisfies a reciprocity statement.
\begin{theorem*}
[Theorem~\ref{thm:restrictedReciprocity}]
Let $\Gamma = (\{1,2,\dots,n \},E)$ be a graph and fix a proper $k'$-coloring $c' \colon V' \rightarrow \{1,2,\dots,k' \}$ on the induced subgraph $\left. G\right|_{V'}$ for a subset $V' \subset V(\Gamma)$. Then, for $k \geq k'$, the restricted chromatic polynomial 
\begin{equation}
\label{eq:RestrictedChromaticPolynomial}
\chi_{c', \Gamma}(k)= \# \text{proper $k$-colorings c of }\Gamma \text{ such that }\left.c \right|_{V'} = c'.
\end{equation} is a polynomial of degree $\#V - \#V'=:s$ with leading coefficient $1$, whose coefficients $a_i$ alternate in sign, and whose absolute values of the coefficient form a \emph{log-concave sequence}, i.e., $a_i^2 \geq a_{i-1}a_{i+1}$ holds for $0 < i<s$. The second highest coefficient $a_{n-1}$ is given by
\[
-a_{n-1} =\# \text{edges }\{ v_i, v_j\} \text{ such that } \{v_i, v_j \}\nsubseteq V'.
\]
Moreover, we have the reciprocity statement
\begin{align*}
\label{eq:RestrictedReciprocity}
\chi_{c', \Gamma}(-k) &= (-1)^{s} \# (\alpha, c)\text{ of }\Gamma \text{ with } \left.c \right|_{V'}=c'\\
					 &= (-1)^s \chi_{c',\Gamma}(k),\\
\end{align*}
where $(\alpha,c)$ is a pair of an acyclic orientation $\alpha$ and a compatible (\emph{not necessarily proper}) $k$-coloring, $c$, of $\Gamma$. 
\end{theorem*}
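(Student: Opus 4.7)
The plan is to realize $\chi_{c',\Gamma}$ as the characteristic polynomial of a single concrete matroid --- the matroid of the restricted graphic hyperplane arrangement --- and then invoke Whitney/M\"obius inversion, Adiprasito--Huh--Katz, and Stanley reciprocity on that object. Set $W := V \setminus V'$ and $V'_i := (c')^{-1}(i)$, and consider the graphic arrangement $\mathcal{A}_\Gamma$ in $\mathbb{F}_k^V$ with hyperplanes $H_e := \{x_u = x_v\}$ for each $e = \{u,v\} \in E$. The affine flat $F_{c'} := \{x : x_v = c'(v) \text{ for all } v \in V'\}$ is canonically identified with $\mathbb{F}_k^W$, and proper $k$-colorings of $\Gamma$ extending $c'$ correspond bijectively to points of $F_{c'} \setminus \bigcup_e H_e$. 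Hence $\chi_{c',\Gamma}(k)$ is the characteristic polynomial of the restricted arrangement $\mathcal{A}_\Gamma|_{F_{c'}}$, and in particular a polynomial of degree $s = \dim F_{c'}$ with leading coefficient $1$.

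From the Whitney/M\"obius expansion for this characteristic polynomial,
\[
\chi_{c',\Gamma}(k) = \sum_{S \subseteq E \text{ compatible}} (-1)^{|S|} k^{c_W(S)},
\]
where $S$ is \emph{compatible} with $c'$ if every connected component of $(V,S)$ meeting $V'$ uses a single $c'$-color and $c_W(S)$ is the number of components of $(V,S)$ contained in $W$, the leading coefficient comes from $S = \emptyset$, and the contributions at exponent $s-1$ come from compatible singletons $\{e\}$. Edges $e \subseteq V'$ are incompatible because $c'$ is proper, while each edge $e \not\subseteq V'$ contributes $-1$, yielding the stated value of the second coefficient.

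Alternating signs and log-concavity then follow from the Adiprasito--Huh--Katz theorem on log-concavity of the characteristic polynomial of an arbitrary matroid, applied to the matroid $M$ of $\mathcal{A}_\Gamma|_{F_{c'}}$. This is the conceptual core of the proof, and the main obstacle is precisely here: one must identify $M$ cleanly (with loops coming from edges $e \subseteq V'$, and parallel classes arising when several edges restrict to the same hyperplane of $F_{c'}$) and verify that its characteristic polynomial really is $\chi_{c',\Gamma}$. Without this matroidal identification a direct proof of log-concavity is not available, since $\chi_{c',\Gamma}$ is only a chromatic polynomial in the extended arrangement-theoretic sense, and log-concavity is not preserved by naive algebraic operations such as division of polynomials.

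For the reciprocity statement I would adapt Stanley's classical deletion-contraction argument. Edges $e \subseteq V'$ are automatically satisfied and may be ignored, so induction runs on the number of edges incident to $W$: both sides of $\chi_{c',\Gamma}(-k) = (-1)^s \#\{(\alpha,c) : \alpha \text{ acyclic on } \Gamma,\ c \text{ compatible},\ c|_{V'}=c'\}$ satisfy the same recurrence $\chi_{c',\Gamma} = \chi_{c',\Gamma \setminus e} - \chi_{c'', \Gamma/e}$, with $c''$ the natural extension of $c'$ to the contracted vertex. The base case, in which no edge of $\Gamma$ touches $W$, reduces to $\chi_{c',\Gamma}(k) = k^s$ matched against the $k^s$ free labelings on $W$ paired with the unique orientation of $\Gamma|_{V'}$ compatible with $c'$, which is forced by properness of $c'$; this anchors the induction and completes the proof.
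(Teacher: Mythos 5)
Your approach is genuinely different from the paper's. The paper realizes $\chi_{c',\Gamma}$ via inside-out polytopes: it intersects the cube $[0,1]^n$ and the graphic arrangement $\mathcal{H}(\Gamma)$ with the flats $x_v = c'(v)$, invokes the M\"obius-function expansion of Beck--Zaslavsky (Theorem~\ref{thm:Moebius}) to identify $\chi_{c',\Gamma}$ with the characteristic polynomial of the restricted arrangement, and gets reciprocity from Corollary~\ref{cor:LowerDimReci} together with the Beck--Zaslavsky multiplicity interpretation. You instead go through a finite-field point count, a Whitney-type sum over compatible edge subsets, Adiprasito--Huh--Katz for log-concavity, and Stanley-style deletion--contraction for reciprocity. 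Both roads lead through the characteristic polynomial of the restricted arrangement, so conceptually they are cousins; yours is more elementary in avoiding Ehrhart machinery, but as written it has two gaps worth naming.

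First, the second-coefficient argument. In your expansion $\chi_{c',\Gamma}(k) = \sum_{S \text{ compatible}} (-1)^{|S|}\,k^{c_W(S)}$, it is not true that only compatible singletons contribute at exponent $s-1$. If $w \in W$ is adjacent to two vertices $v_1, v_2 \in V'$ with $c'(v_1)=c'(v_2)$, then $S=\{\{v_1,w\},\{v_2,w\}\}$ is compatible and has $c_W(S)=s-1$, so it contributes $+k^{s-1}$. In that situation $-a_{s-1}$ is \emph{not} the raw count of edges $\not\subseteq V'$; take $\Gamma=P_3$ with endpoints in $V'$, both given color $1$, and the middle vertex in $W$: then $\chi_{c',\Gamma}(k)=k-1$, so $-a_{s-1}=1$, yet there are two edges not inside $V'$. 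The paper's own argument has the same blind spot (distinct edges can restrict to the same hyperplane of $\overline{\mathcal H}$), so you are in good company, but you should not present ``only singletons contribute'' as a proof; what is actually true is that $-a_{s-1}$ equals the number of \emph{distinct restricted hyperplanes}, which coincides with the number of edges $\not\subseteq V'$ only when no such collisions occur.

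Second, the reciprocity. Your deletion--contraction plan works at the level of the counting polynomial $\chi_{c',\Gamma}$, but the right-hand side of the reciprocity statement uses the paper's notion of compatibility for restricted colorings, which carries an extra clause: if $v\in V\setminus V'$ is adjacent to several vertices $u_1,\dots,u_s\in V'$ with $c'(u_1)=\dots=c'(u_s)$, then all edges $\{v,u_i\}$ must be oriented the same way. This clause is exactly what prevents overcounting when several edges restrict to the same hyperplane, and it is not part of Stanley's classical setup. Your induction must show that this constrained count satisfies the same recurrence $\bar\chi_{c',\Gamma} = \bar\chi_{c',\Gamma\setminus e} + \bar\chi_{c'',\Gamma/e}$; the paper sidesteps this by reading off both sides from the closed Ehrhart count $\sum_x m_{(k-1)\overline P,\overline{\mathcal H}}(x)$, where the multiplicity bookkeeping handles coherence of orientations automatically. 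You would also want $\chi_{c'',\Gamma/e}$ and $\bar\chi_{c'',\Gamma/e}$ to be declared zero when the induced $c''$ is no longer proper on the contracted $V'$. Finally, a small technical point: $\mathbb{F}_k$ only exists when $k$ is a prime power, so the identification of the point count with the characteristic polynomial should be routed through the Whitney expansion or the Crapo--Rota/Athanasiadis finite-field theorem at large primes, and then upgraded to a polynomial identity; the paper's Ehrhart route gives the polynomial identity directly.
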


In Definition~\ref{def:DefinitionL}, using a group action and restricted $k$-colorings, we define a compactified transfer matrix $L$. The rows and columns of $L$ are labeled by orbits $o_1$, $o_2$, $\dots$, $o_p$ of this group action, see Definition~\ref{def:orbits}. As it turns out, all entries of $L$ are polynomials in $k$:
\begin{theorem*}[Theorem~\ref{thm:BasicGeometricFacts}]
With the notation from above and with $k\geq N$, we have:
\begin{enumerate}
\item Every entry $L_{o_i, o_j}(k)$ equals the sum of Ehrhart polynomials of lattice inside-out polytopes of dimension $\# \text{colors of }o_j$ and hence is a polynomial of degree $\# \text{colors of }o_j$,
\item $L_{o_i, o_j}(k)$ is independent of the choice of the representative, i.e., it is well-defined,
\item $\# o_i \cdot L_{o_i, o_j}(k) = \chi_{G_{o_i,o_j}}(k)$, where
\begin{align*}
\chi_{G_{o_i, o_j}}(k) = &\# \text{proper } k\text{-colorings}\colon \text{coloring of }G \times \{1\} \text{ is in }o_i \text{ and } \\ &\text{ coloring of }G \times \{2\} \text{ is in }o_j.\\
\end{align*}
\end{enumerate}
\end{theorem*}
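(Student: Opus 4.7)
The plan is to unpack $L_{o_i,o_j}(k)$ (Definition~\ref{def:DefinitionL}) as a count of restricted colorings, translate that count into a lattice-point enumeration on a lattice inside-out polytope, and then dispatch well-definedness and part~(3) by equivariance and a double count.

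For part~(1), fix any representative $c_i$ of $o_i$; by construction $L_{o_i,o_j}(k)$ counts the proper $k$-colorings $c_j$ of $G\times\{2\}$ that lie in $o_j$ and are compatible with $c_i$, meaning $c_j(v)\neq c_i(v)$ for every $v\in V(G)$ so that the vertical edges of $G\times P_2$ are properly colored. Writing $m=\#\text{colors of }o_j$ with blocks $B_1,\ldots,B_m$ of the underlying set-partition, a coloring $c_j$ is specified by assigning distinct values $(a_1,\ldots,a_m)\in\{1,\ldots,k\}^m$ to the blocks; compatibility with $c_i$ becomes the condition $a_\ell\notin c_i(B_\ell)$ for each $\ell$. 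These are lattice hyperplane conditions inside the $k$-th dilate of the unit cube $[0,1]^m$, placing us in the Beck--Zaslavsky framework of a lattice inside-out polytope of dimension $m$. Decomposing the open complement into its open regions expresses the count as a sum of Ehrhart polynomials of lattice polytopes of dimension $m=\#\text{colors of }o_j$, which establishes part~(1).

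Part~(2) is a standard equivariance argument: if $c_i'=g\cdot c_i$ is another representative of $o_i$, then $c_j\mapsto g\cdot c_j$ is a bijection between the compatible extensions in $o_j$ starting from $c_i$ and those starting from $c_i'$, because the action preserves properness, preserves the orbit $o_j$ setwise, and respects the per-vertex compatibility condition. For part~(3), stratify the pairs counted by $\chi_{G_{o_i,o_j}}(k)$: it enumerates ordered pairs $(c,c')$ of proper $k$-colorings of $G$ with $c\in o_i$, $c'\in o_j$, and $c(v)\neq c'(v)$ for every $v$; partitioning by the first coordinate and invoking part~(2), each of the $\#o_i$ choices of $c$ contributes exactly $L_{o_i,o_j}(k)$, giving the claimed product.

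The step I expect to be hardest is part~(1): pinning down the polytope/arrangement pair so that its dimension is precisely $\#\text{colors of }o_j$ (not $\#V(G)$ and not a quantity depending on the palette of $c_i$), and checking that all defining hyperplanes are integral so that the count is a genuine polynomial rather than merely a quasi-polynomial. A minor bookkeeping point is that if the group action of Definition~\ref{def:orbits} is finer than the symmetric group on colors, the "sum of Ehrhart polynomials" may need to be read as a sum over substrata of $o_j$ rather than as the Beck--Zaslavsky region decomposition alone; once the correct polytope is identified, however, the Ehrhart theory of inside-out polytopes produces the polynomial decomposition automatically, and parts~(2) and~(3) follow by the bookkeeping above.
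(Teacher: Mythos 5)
Your proposal is correct and follows essentially the same strategy as the paper's proof. The only real difference is in part (1): the paper first builds a $(N+m)$-dimensional inside-out polytope inside $[0,1]^{2N}$ (intersecting with the hyperplanes $\mathcal{H}_I$ coming from the blocks of $o_j$), identifies this with the chromatic arrangement of a quotient graph of $G\times P_2$, and then invokes Theorem~\ref{thm:restrictedReciprocity} to fix the coloring of $G\times\{1\}$ and drop to dimension $m=\#\text{colors of }o_j$; you instead parameterize directly by the block colors $(a_1,\dots,a_m)$, landing immediately on the $m$-dimensional lattice inside-out polytope with braid hyperplanes $a_\ell=a_{\ell'}$ and integral affine hyperplanes $a_\ell=c$, $c\in c_i(B_\ell)$. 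This is a legitimate shortcut that bypasses the quotient-graph construction, though it relies on the same Beck--Zaslavsky/M\"obius machinery that the paper packages into Theorem~\ref{thm:restrictedReciprocity}; the caveat you flag (that the affine, non-central hyperplanes do not dilate with the cube, so one should argue via the characteristic polynomial rather than a naive region dilation) is exactly how the paper's restrictedReciprocity proof handles it. Your treatment of parts (2) and (3) -- equivariance under the group of Definition~\ref{def:orbits} for well-definedness, and the double count stratified by the coloring of $G\times\{1\}$ -- matches the paper's argument, including your correct note that when $\mathcal{G}$ is nontrivial the ``sum of Ehrhart polynomials'' ranges additionally over the $\tilde{o}$-orbits in the fiber over $o_j$.
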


The matrix $L$ in fact behaves like a transfer matrix:
\begin{theorem*}[Theorem~\ref{thm:L^n_entry}]
Let $V(P_{n+1}) = \{1,2,\dots, n+1\}$ and let $G$ be any graph. Let $o_1, o_2,\dots, o_p$ be the orbits as defined in Definition \ref{def:orbits}. Then, for $k \geq \# \text{colors used in }o_i$, the $(o_i,o_j)$-entry of $L^n$  counts the number of proper $k$-colorings of $G \times P_{n+1}$, where $G \times \{1\}$ is fixed by a coloring in orbit $o_i$, and where the coloring of $G \times \{n+1\}$ lies $o_j$. 
\end{theorem*}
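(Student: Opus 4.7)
The plan is to prove this by induction on $n$, following the standard transfer-matrix philosophy that a power $M^n$ counts length-$n$ walks in the weighted graph encoded by $M$. The base case $n=1$ is essentially content of the previous theorem: part~(3) of Theorem~\ref{thm:BasicGeometricFacts} says that $\#o_i\cdot L_{o_i,o_j}(k)$ is the total number of proper $k$-colorings of $G\times P_2$ whose first layer lies in $o_i$ and whose second layer lies in $o_j$, while part~(2) says each of the $\#o_i$ representatives of $o_i$ contributes the same count. Dividing, $L_{o_i,o_j}(k)$ is exactly the number of extensions of a fixed representative of $o_i$ on $G\times\{1\}$ to a proper $k$-coloring of $G\times P_2$ whose second layer lies in $o_j$.

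For the inductive step, I would decompose a proper $k$-coloring of $G\times P_{n+2}$ by the orbit $o_\ell$ containing the coloring of the middle layer $G\times\{n+1\}$. Since $P_{n+2}$ has no edges between $\{1,\dots,n+1\}$ and $\{n+2\}$ beyond the one incident to $n+1$, a proper $k$-coloring of $G\times P_{n+2}$ is precisely the gluing along $G\times\{n+1\}$ of a proper $k$-coloring of $G\times P_{n+1}$ (on layers $1,\dots,n+1$) and a proper $k$-coloring of $G\times P_2$ (on layers $n+1,n+2$). By the induction hypothesis, the number of first pieces with prescribed representative on layer $1$ in $o_i$ and middle layer in $o_\ell$ is $(L^n)_{o_i,o_\ell}$; for each such piece the coloring of layer $n+1$ is some specific representative of $o_\ell$, and by part~(2) of Theorem~\ref{thm:BasicGeometricFacts} the number of proper extensions to layer $n+2$ with image in $o_j$ equals $L_{o_\ell,o_j}(k)$, independent of which representative was realized. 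Summing over $\ell$ gives exactly
\[
(L^{n+1})_{o_i,o_j} \;=\; \sum_\ell (L^n)_{o_i,o_\ell}\, L_{o_\ell,o_j}(k),
\]
as required.

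The main obstacle, and the reason the argument is not merely bookkeeping, is the representative-dependence built into $L$: the entry $L_{o_\ell,o_j}$ was defined by fixing one particular coloring $c'_\ell$ of $o_\ell$ on the first layer, yet in the middle of an inductive step the coloring actually appearing on layer $n+1$ is whatever the first piece happens to produce, which is almost never $c'_\ell$. Without the well-definedness guaranteed by part~(2) of Theorem~\ref{thm:BasicGeometricFacts}, the factorization along the middle layer would not give a product of the form $(L^n)_{o_i,o_\ell}\cdot L_{o_\ell,o_j}$, and the transfer-matrix identity would fail term-by-term even if the total counts happened to agree. Everything else in the proof — the choice of induction variable, the gluing over the shared layer, the partitioning by $o_\ell$ — is then mechanical.
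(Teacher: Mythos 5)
Your proof is correct and follows essentially the same route as the paper: induction on $n$, decomposing a proper coloring of $G\times P_{n+2}$ by the orbit of the middle layer and gluing a $G\times P_{n+1}$-piece to a $G\times P_2$-piece, which reproduces the matrix product $(L^{n+1})_{o_i,o_j}=\sum_\ell (L^n)_{o_i,o_\ell}L_{o_\ell,o_j}$. You are in fact somewhat more careful than the paper in explicitly invoking part~(2) of Theorem~\ref{thm:BasicGeometricFacts} to justify that the count of extensions past the middle layer is independent of which representative of $o_\ell$ the first piece happens to realize; the paper leaves this step implicit.
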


Moreover, $L$ can be used to explicitly compute the chromatic polynomial of $G \times P_n$.
\begin{corollary*}[Corollary~\ref{cor:ChromPolyProduct}]
Let $G \times P_{n+1}$ and $L$ be as above. Then
\begin{equation}
\label{eq:chromatic_poly_P_n}
\chi_{G \times P_{n+1}}(k) = (w_1(k),\dots, w_p(k)) L^n \mathbf{1},
\end{equation}
where $w_i (k)$ is the size of $o_i$ and $\mathbf{1}: = (1,\dots, 1)^t$.
\end{corollary*}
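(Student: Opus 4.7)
The plan is to expand the matrix-vector product $(w_1(k),\dots,w_p(k))\,L^n\,\mathbf{1}$ and match it, orbit by orbit, with a partition of the proper $k$-colorings of $G \times P_{n+1}$ according to which orbit the restriction to the first layer $G \times \{1\}$ belongs to. The corollary will then drop out as an immediate consequence of Theorem~\ref{thm:L^n_entry} combined with the orbit symmetry built into the definition of $L$.

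First I would note that every proper $k$-coloring of $G \times P_{n+1}$ restricts to a proper $k$-coloring of $G \times \{1\}$, and this restriction lies in exactly one orbit $o_i$ (since the orbits partition the set of proper $k$-colorings of $G$). Hence the chromatic polynomial decomposes as
\[
\chi_{G \times P_{n+1}}(k) \;=\; \sum_{i=1}^p N_i(k),
\]
where $N_i(k)$ counts those proper $k$-colorings whose restriction to $G \times \{1\}$ lies in $o_i$. Theorem~\ref{thm:L^n_entry} directly computes, for a fixed representative coloring of $o_i$ placed on $G \times \{1\}$, the number of extensions to a proper $k$-coloring of $G \times P_{n+1}$ whose restriction to $G \times \{n+1\}$ lies in $o_j$; this number is $(L^n)_{o_i,o_j}$. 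Summing over $j$ — each last-layer coloring lies in a unique orbit — shows that each fixed representative of $o_i$ admits $(L^n\mathbf{1})_{o_i}$ extensions in total.

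The key step is the symmetry claim: the number of extensions of a first-layer coloring does not depend on which representative of $o_i$ is chosen. This follows because the group action defining the orbits is a color-relabeling symmetry that extends to an action on all colorings of $G \times P_{n+1}$ acting identically on every layer; this extended action permutes proper $k$-colorings and carries extensions of one representative bijectively to extensions of any other representative of $o_i$. Consequently $N_i(k) = w_i(k)\cdot(L^n\mathbf{1})_{o_i}$, and summing over $i$ assembles exactly the row-vector times $L^n \mathbf{1}$ expression in the statement.

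The only real obstacle is cleanly invoking this orbit-level independence; but it is essentially the well-definedness already established for individual entries of $L$ in Theorem~\ref{thm:BasicGeometricFacts}\,(2), lifted from a single transition step to $n$ steps by iterating the same action through the intermediate layers. Once that point is addressed, the proof is a short bookkeeping argument.
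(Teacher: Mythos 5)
Your proposal is correct and follows essentially the same route as the paper's proof: use Theorem~\ref{thm:L^n_entry} to interpret $(L^n\mathbf{1})_i$ as the number of extensions of a fixed representative of $o_i$, invoke the well-definedness from Theorem~\ref{thm:BasicGeometricFacts} to multiply by $w_i(k)$, and sum over orbits. The one small imprecision is describing the orbit-defining group action purely as ``color-relabeling'': $o_i$ is also a quotient by a subgroup $\mathcal{G}$ of $\operatorname{Aut}(G)$, and that part of the action likewise extends layerwise to $G\times P_{n+1}$, which is exactly what the appeal to Theorem~\ref{thm:BasicGeometricFacts}(2) covers.
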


The row sums of this matrix also satisfy a reciprocity theorem:
\begin{theorem*}[Theorem~\ref{thm:Reciprocity_L^n}]
Let $L\in \mathbb{Z}^{p \times p}$ be as above, let $L^n_{i} : = \sum_{k=1}^p \left(L^n\right)_{i,k}$ be the $i^{\text{th}}$ row sum of $L^n$, and let $V(P_{n+1}) = \{1,2,\dots, n+1 \}$. Then, for $k \geq N=\# V(G)$, we have
\begin{equation}
\label{eq:reciprocity_L^n}
L^n_{i} (-k) =(-1)^{Nn} \# (\alpha, c)\text{ of } G \times P_{n+1} \text{ where } G \times \{ 1\} \text{ is colored by repr. of } o_i,
\end{equation}
where $(\alpha,c)$ is a pair of an acyclic orientation $\alpha$ and a compatible $o_i$-restricted $k$-coloring $c$.
\end{theorem*}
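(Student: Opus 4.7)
The plan is to identify the row sum $L^n_i(k)$ with a restricted chromatic polynomial of $G \times P_{n+1}$, and then apply Theorem~\ref{thm:restrictedReciprocity} directly. Concretely, I would first fix once and for all a representative coloring $c_i'$ of the orbit $o_i$, viewing it as a proper coloring of the induced subgraph $(G \times P_{n+1})\big|_{V(G \times \{1\})}$. Since this subgraph is isomorphic to $G$, the coloring $c_i'$ uses at most $N = \#V(G)$ colors, so the hypothesis $k \geq k'$ of Theorem~\ref{thm:restrictedReciprocity} is subsumed by the standing assumption $k \geq N$.

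Next, I would combine Theorem~\ref{thm:L^n_entry} with the partition of all proper $k$-colorings of $G \times \{n+1\}$ into orbits. Summing the identity from Theorem~\ref{thm:L^n_entry} over $j = 1,\dots,p$ gives
\begin{equation*}
L_i^n(k) \;=\; \sum_{j=1}^{p} (L^n)_{i,j}(k) \;=\; \chi_{c_i',\, G \times P_{n+1}}(k),
\end{equation*}
because every proper $k$-coloring of $G\times P_{n+1}$ restricting to $c_i'$ on $G \times \{1\}$ places the coloring of $G \times \{n+1\}$ in exactly one of the orbits $o_1,\dots,o_p$. In particular, $L_i^n(k)$ agrees with a polynomial of degree $s = N(n+1) - N = Nn$ for all integers $k \geq N$, and because Theorem~\ref{thm:restrictedReciprocity} guarantees this right-hand side is a polynomial identity, the equality extends to all $k$.

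Now I would apply Theorem~\ref{thm:restrictedReciprocity} with $\Gamma = G \times P_{n+1}$ and $V' = V(G \times \{1\})$, so that $s = Nn$. The reciprocity statement there yields
\begin{equation*}
L_i^n(-k) \;=\; \chi_{c_i',\, G \times P_{n+1}}(-k) \;=\; (-1)^{Nn}\,\#(\alpha,c),
\end{equation*}
where $(\alpha,c)$ ranges over pairs consisting of an acyclic orientation $\alpha$ of $G \times P_{n+1}$ and a compatible (not necessarily proper) $k$-coloring $c$ with $c|_{V(G\times\{1\})} = c_i'$. This is precisely the definition of an ``$o_i$-restricted'' compatible coloring in the statement, so \eqref{eq:reciprocity_L^n} follows.

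The only point that needs a small argument is that the count on the right is independent of which representative $c_i'$ of $o_i$ we pick: this follows from the fact that the group action permuting colors sends acyclic-orientation/compatible-coloring pairs of $G \times P_{n+1}$ to pairs of the same type and restricts to a bijection between the representatives of $o_i$, so all choices yield the same integer. I do not expect a serious obstacle beyond bookkeeping; the substantive content is already packaged in Theorems~\ref{thm:restrictedReciprocity} and \ref{thm:L^n_entry}, and the work here is just to observe that ``summing out the second coordinate'' converts an $L^n$-entry count into a restricted chromatic polynomial on which the reciprocity can be invoked.
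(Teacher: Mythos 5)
Your proposal is correct and follows essentially the same route as the paper: identify the row sum $L_i^n(k)$ with the restricted chromatic polynomial $\chi_{c_i',\,G\times P_{n+1}}(k)$ and then invoke Theorem~\ref{thm:restrictedReciprocity} with $s = Nn$. The paper's own proof is a terse two-sentence version of exactly this argument; you have simply made explicit the summation over orbits via Theorem~\ref{thm:L^n_entry} and the well-definedness over choices of representative.
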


Asymptotically, the power of the biggest eigenvalue $\lambda_{\max}^{n-1}$ of $L$ determines the number of proper colorings of $G\times C_n$. We give explicit bounds of this eigenvalue in terms of the row sums. Let $\delta (L )$  and $\Delta (L)$ be the smallest and biggest row sums of $L$, respectively.

\begin{proposition*}[Proposition~\ref{prop:AsymptoticsCn}]
Let $G$ be a graph and $N = \# V(G)$ and let $\delta (L)$ and $\Delta (L)$ be as above. Then the doubly asymptotic behavior of the number of proper $k$-colorings of $G \times C_n$ is dominated by $\lambda_{\max}$ and
\[
\delta(L) \leq \lambda_{\max} \leq \Delta(L),
\]
where $\delta(L)  = \sum_{i=0}^N a_i k^i $, $\Delta(L)=\sum_{i=0}^N b_i k^i $, $a_N = b_N$, and $a_{N-1} = b_{N-1}$.
\end{proposition*}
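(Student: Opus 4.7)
The plan is to combine a standard non-negative-matrix sandwich for $\lambda_{\max}$ with a direct application of Theorem~\ref{thm:restrictedReciprocity} to the row sums. First, for $k \ge N$ each entry $L_{o_i,o_j}(k)$ is a non-negative integer (up to the positive factor $\# o_i$) by Theorem~\ref{thm:BasicGeometricFacts}, so $L(k)$ is a non-negative matrix. The classical inequality $\delta(L) \le \lambda_{\max} \le \Delta(L)$ for non-negative matrices --- immediate from Perron--Frobenius, or from applying $L$ to the all-ones vector --- yields the sandwich in the statement. For the doubly asymptotic claim, the argument proving Corollary~\ref{cor:ChromPolyProduct} adapts to the cyclic closure $G \times C_n$ and expresses $\chi_{G \times C_n}(k)$ as a trace-like combination of powers of $L$. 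A Jordan decomposition then gives $\chi_{G \times C_n}(k) = c(k)\,\lambda_{\max}^{n-1} + O(|\lambda_2|^{n-1})$, where $\lambda_2$ is the eigenvalue of second-largest modulus. For $k$ large enough, $L$ has strictly positive entries (any representative coloring of $o_i$ can be completed to any other orbit once sufficiently many colors are available), so Perron--Frobenius forces $|\lambda_2| < \lambda_{\max}$ and the $\lambda_{\max}^{n-1}$ term genuinely dominates.

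The conceptual heart is identifying the row sums. For a representative $c'$ of $o_i$ on $V' := V(G \times \{1\})$, the decomposition
\[
\sum_{j=1}^p L_{o_i, o_j}(k) = \chi_{c',\,G \times P_2}(k)
\]
expresses the row sum as the restricted chromatic polynomial of $G \times P_2$ with the first layer fixed. Theorem~\ref{thm:restrictedReciprocity} then gives degree $s = 2N - N = N$, leading coefficient $1$, and second coefficient
\[
-\bigl(\#\text{edges of } G \times P_2 \text{ with an endpoint outside } V'\bigr) = -\bigl(|E(G)| + N\bigr),
\]
counting the $|E(G)|$ edges of $G \times \{2\}$ together with the $N$ vertical edges. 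Neither quantity depends on the orbit $o_i$, so every row sum has the same top two coefficients, forcing $a_N = b_N = 1$ and $a_{N-1} = b_{N-1} = -(|E(G)| + N)$, and pinning $\lambda_{\max}$ to the same leading $k$-behavior.

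The main obstacle is the asymptotic step rather than this coefficient match. One must verify that $L(k)$ is primitive, or at least that $\lambda_{\max}$ is simple and strictly dominant, for all sufficiently large $k$, and then argue that the bound $\delta(L) \le \lambda_{\max} \le \Delta(L)$ combined with matching top coefficients controls the joint $n,k \to \infty$ regime uniformly. The row-sum identification is by contrast an essentially immediate corollary of Theorem~\ref{thm:restrictedReciprocity}, which is what makes the $k$-dependence of $\lambda_{\max}$ so tightly controlled.
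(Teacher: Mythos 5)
Your proposal is correct and follows essentially the same route as the paper: the Perron--Frobenius row-sum sandwich is exactly Lemma~\ref{UpperLowerBounds} (which the paper cites to a degree--eigenvalue bound), and the identification of the top two coefficients is exactly the content of Lemma~\ref{cor:FastBound}, derived from Theorem~\ref{thm:restrictedReciprocity}. The key observation that each row sum equals $\chi_{c',\,G\times P_2}(k)$ for the representative $c'$ of $o_i$ on layer one is the right way to see it, and your value $a_{N-1} = -(|E(G)|+N)$ follows directly from that theorem. Incidentally, this is cleaner than the formula stated in Lemma~\ref{cor:FastBound}, whose extra term $+\,\#\text{orbits using }N-1\text{ colors}$ does not check out against the $G=P_3$ data in Example~\ref{ex:CompactifiedTMM2}: the two row sums are $k^3-5k^2+10k-7$ and $k^3-5k^2+10k-8$, so $a_{N-1}=-5=-(|E(P_3)|+3)$, while $-F + \#\text{orbits using }2\text{ colors} = -5+1=-4$.

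One place to tighten: the claim that the argument of Corollary~\ref{cor:ChromPolyProduct} ``adapts to the cyclic closure $G\times C_n$'' and gives $\chi_{G\times C_n}(k)$ as a trace-like combination of powers of $L$ does not go through as stated. Closing the cycle requires layer $n$ to coincide with layer $1$ as a \emph{coloring}, not merely as an orbit, whereas $(L^n)_{i,i}$ counts walks landing anywhere in $o_i$, not back at the representative. The clean route is the one the paper implicitly takes: $\chi_{G\times C_n}(k) = \operatorname{tr}(A_{M_G}^n)$, and since $A_{M_G}$ is a nonnegative symmetric matrix, its trace is asymptotically governed by $\lambda_{\max}(A_{M_G})^n$; by Corollary~\ref{cor:sameeigenvalue} this equals $\lambda_{\max}(L)$. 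Your primitivity observation (all entries of $L(k)$ strictly positive for $k$ large) is correct and is what guarantees simplicity and strict dominance of $\lambda_{\max}$, but it lives more naturally at the level of $A_{M_G}$ for the cycle count. With that substitution the argument is complete.
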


In the second part of this note, we show that a similar philosophy can be used to count the number discrete Markov chains. In (\ref{eq:MarkovMatrixDef}), we define a matrix $M$ that also acts as a transfer matrix. 

\begin{theorem*}[Theorem~\ref{thm:MarkovMatrix}]
With the previous notation, we have:

\begin{itemize}
\item the number of chains of length $n+1$ is given by
\begin{equation}
\label{eq:MarkovChain}
\# \text{number of chains of length n+1} = \mathbf{1}^t M^n \left|_{x = (1,1,\dots,1)} \right. \mathbf{1}\text{,}
\end{equation}
\item the number $I_k$ of chains so that no element is increased by more than $k$ is given by
\begin{equation}
\label{eq:MarkovLocalChange}
I_k = \left(\deg_{x_1, x_2, \dots, x_r\leq k}(x^{b^1},\dots,x^{b^n}) M^n \right) \left|_{x = (1,1,\dots,1)} \right. \mathbf{1}\text{,}
\end{equation}
where no indeterminate $x_i$ can have a degree bigger than $k$.
\end{itemize} 
\end{theorem*}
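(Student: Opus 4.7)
The plan is to interpret $M$ as a weighted transfer matrix whose $(i,j)$-entry is a monomial in $x_1,\dots,x_r$ whose exponent vector records, coordinatewise, the local increment incurred by going from state $i$ to state $j$ in a single Markov step. Expanding the $(i,j)$-entry of $M^n$ as
\[
(M^n)_{ij}=\sum_{i=i_0,i_1,\dots,i_n=j}\prod_{t=0}^{n-1}M_{i_t i_{t+1}},
\]
each summand is a single monomial whose exponent vector is the sum of the per-step increments along the chain $i_0\to i_1\to\cdots\to i_n$, i.e.\ the total cumulative increment. This sets up a bijection between the monomials appearing in $(M^n)_{ij}$ (counted with multiplicity) and the Markov chains of length $n+1$ from state $i$ to state $j$.

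For the first bullet, forming the scalar $\mathbf{1}^tM^n\mathbf{1}$ sums the entries of $M^n$ over all starting and ending states, and substituting $x=(1,\dots,1)$ collapses every monomial to $1$, so the resulting number is exactly the total count of length-$(n+1)$ chains, proving \eqref{eq:MarkovChain}. For the second bullet, observe that a chain satisfies the constraint that no coordinate is increased by more than $k$ in total precisely when the associated monomial has degree at most $k$ in each variable $x_i$. Hence the degree-truncation operator $\deg_{x_1,\dots,x_r\leq k}$ applied to $(x^{b^1},\dots,x^{b^n})M^n$ keeps exactly the terms corresponding to admissible chains, and the subsequent substitution $x=(1,\dots,1)$ together with right-multiplication by $\mathbf{1}$ sums these contributions, each weighted by $1$, yielding $I_k$ and proving \eqref{eq:MarkovLocalChange}.

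The main obstacle I anticipate is reconciling the conventions introduced in \eqref{eq:MarkovMatrixDef}: one must verify that the row vector $(x^{b^1},\dots,x^{b^n})$ correctly encodes the weighting of the starting state, so that the composite $(x^{b^1},\dots,x^{b^n})M^n$ is genuinely the generating function for length-$(n+1)$ chains by total increment, without an off-by-one shift or a spurious repeated weighting of the first transition. One also needs to check that the degree-truncation operator commutes with right-multiplication by $M$ in the intended direction, so that truncating at the end (rather than stepwise) still yields the intended count; this is immediate because truncation is a linear projection on polynomials in $x$, and the scalar-valued transitions preserve monomiality. Once these bookkeeping details are pinned down, both identities are formal consequences of the chain-to-monomial dictionary above combined with associativity of matrix multiplication.
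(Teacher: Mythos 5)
Your argument is essentially the same as the paper's: the paper proves this theorem by a one-line remark preceding the statement (``Similar to Theorem~\ref{thm:L^n_entry}, $M^n_{i,j}$ records all chains of length $n+1$ with starting state $s^i$ and ending state $s^j$''), and you simply unpack that into the standard path-expansion of a matrix power, which is the right thing to do. One small inaccuracy in your write-up: each factor $M_{i_t i_{t+1}}$ is, by~\eqref{eq:MarkovMatrixDef}, itself a \emph{sum} of monomials $x^f$ over moves $f\in\mathcal{F}$ with $s^{i_t}\stackrel{f}{\to}s^{i_{t+1}}$, so a ``summand'' in your displayed path sum is a product of polynomials, not a single monomial; one must expand one more level so that each resulting monomial corresponds to a choice of \emph{move} (not merely state) at each step. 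Your subsequent sentence identifying chains with the monomials of $(M^n)_{ij}$ counted with multiplicity is the correct formulation and fixes this. Your flagged worry about the prefactor $(x^{b^1},\dots,x^{b^r})$ is also on target: it is there so the exponent vector records the cumulative value reached (base state of the start plus total increment), which is what the degree bound in~\eqref{eq:MarkovLocalChange} constrains.
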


We show that this method can be used to count the number of order-preserving maps from a certain class of posets --- which we call stacked posets --- into $[k]: = \{1,2\dots,k \}$.

\begin{proposition*}[Proposition~\ref{SurjOrderPreserving}]
Let $\mathcal{P}_n$ be a stacked poset. The number of surjective, order-preserving maps from $\mathcal{P}_n$ into $[k]$ is given by
\begin{equation}
\label{eq:SurjOrderPreserving}
\# \left\{\pi\colon \mathcal{P}_n \twoheadrightarrow [k]  \colon \text{order-pres.} \right\}= \left(\operatorname{tdeg}_{= k} (x^{b^1},\dots, x^{b^r})M^{n}\right)\left|_{x = (1,1,\dots,1)}, \right. \mathbf{1}
\end{equation}
where $x = (x_1, \dots, x_m)$ and where $\operatorname{tdeg}_{= k}$ denotes the terms whose total degree equals $k$.
\end{proposition*}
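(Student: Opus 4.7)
The plan is to deduce this directly from Theorem~\ref{thm:MarkovMatrix} by setting up a bijection between surjective order-preserving maps $\pi \colon \mathcal{P}_n \twoheadrightarrow [k]$ and a specific set of weighted chains of length $n+1$ tracked by the matrix $M$. First I would use the stacked structure of $\mathcal{P}_n$ to decompose each $\pi$ into its restrictions to the $n+1$ layers of $\mathcal{P}_n$; since $\pi$ is order-preserving, consecutive layer-restrictions are compatible in exactly the way encoded by $M$ as a transition matrix, and each layer transition contributes a monomial $x^{b^i}$ whose exponents record the local increment of $\pi$ moving between layers.

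Next I would verify that, under this correspondence, surjectivity of $\pi$ onto $[k]$ translates into the total-degree condition $\operatorname{tdeg}_{=k}$. Each time a new value in $[k]$ is used in the image of $\pi$, a corresponding factor appears in the product $(x^{b^1},\dots,x^{b^r}) M^n$, and a careful accounting of the $b^i$-vectors shows that the accumulated monomial has total degree equal to $k$ precisely when $\pi$ is surjective; maps whose image is strictly contained in $[k]$ instead produce monomials of total degree less than $k$. Consequently, applying $\operatorname{tdeg}_{=k}$ retains exactly the contributions of surjective order-preserving maps and discards everything else.

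To finish, I would specialize $x=(1,1,\dots,1)$ to forget the tracking data and multiply on the right by $\mathbf{1}$ to sum over all possible terminal states of the chain, yielding the claimed formula. The overall structure parallels the proof of equation~(\ref{eq:MarkovLocalChange}) almost verbatim; only the extraction operator differs, reflecting the fact that a local bound on each coordinate is replaced by a global requirement on the total degree.

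The main obstacle will be the bijection in the second step, specifically showing that the monomial encoding is tight enough that total degree exactly $k$ is equivalent to surjectivity rather than to the weaker condition that the image is merely contained in $[k]$. This requires unpacking the definition of the $b^i$ with respect to the stacked structure of $\mathcal{P}_n$ and verifying that each value of $[k]$ contributes in a uniform way to the accumulated total degree. Once this encoding is checked, the rest of the argument is essentially bookkeeping resting on Theorem~\ref{thm:MarkovMatrix}.
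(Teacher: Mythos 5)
Your proposal follows the same outline as the paper's proof: decompose $\pi$ layer by layer into a chain of states tracked by $M$, invoke the definition of the moves to guarantee order-preservation and layer-by-layer surjectivity, extract the degree-$k$ terms, and specialize. The paper's own argument is essentially this in two sentences, so in structure you are aligned.

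However, the key assertion in your second step deserves more scrutiny than you give it, and the heuristic you offer for it is not quite right. You claim that each newly introduced value of $[k]$ ``contributes uniformly'' to the total degree, so that total degree $= k$ if and only if $\pi$ is surjective onto $[k]$, and that non-surjective maps give strictly smaller total degree. But unwind what the accumulated monomial actually is: starting from $x^{b^{i_0}}$ and multiplying by $x^{f_1},\dots,x^{f_n}$ produces $x^{L}$ where $L = b^{i_0}+f_1+\dots+f_n$ is the labeling of the \emph{top} layer of $\mathcal{P}_n$. Its total degree is therefore $\sum_\ell L_\ell$, the \emph{sum} of the top-layer labels, whereas surjectivity onto $[k]$ is controlled by $\max_\ell L_\ell$ (the largest label used, which sits in the top layer). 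For $m=1$ these coincide, which is why the chain example checks out, but for $m>1$ they generally do not: e.g., with $m=2$ the surjective maps onto $[2]$ produce monomials of total degree $3$ or $4$, not $2$. So the ``careful accounting'' you postpone is exactly where the argument is most fragile; the translation from surjectivity to the extraction operator is not the routine bookkeeping your write-up suggests. If you go this route you must either reconcile sum-of-labels with max-of-labels (perhaps via the condition~(1) on moves plus some normalization I am not seeing), or else realize that a different extraction operator (e.g., one based on the maximum of the individual variable degrees) is what is actually needed. Be explicit about which of these you are doing; as written, the crux is asserted rather than established.
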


This article is structured as follows. In Section~\ref{sec:Background}, we introduce some basic notions about graphs. In particular, we introduce proper colorings, acyclic orientations, and the Cartesian graph product. In Section~\ref{sec:Special Cases}, we introduce a transfer-matrix method and show how one can use this to count the number of proper colorings when the number of colors is fixed. We then --- following \cite{BeckZaslavsky06} --- introduce inside-out polytopes and show how to count proper colorings of $G \times P_n$ when $n$ is fixed. In Section~\ref{sec:TMMEhrhart}, we illustrate how one can use symmetry to define a compactified transfer matrix $L$, whose size does not depend on $n$. We state and prove our main results about this matrix $L$. We end this section, with a brief interlude on counting the number orbits under a group action, where Bell numbers make a surprising appearance. We then switch gears for Section~\ref{sec:DMP}, where we show that a similar approach can be used to count discrete Markov chains. 

\section{Acknowledgement}
We would like to thank the anonymous referee for helpful suggestions and constructive criticism. The first author wants thank G\"unter M. Ziegler and FU Berlin for their hospitality during his sabbatical, which is where this work began. The second author was supported by a scholarship of the Berlin Mathematical School. He would like to thank the first author and Aalto University for their hospitality and their support during a month-long research stay. He would also like to thank Christian Haase for fruitful discussions and his support.

\section{Background and Notation}
\label{sec:Background}
This section reviews basic concepts about graphs such as proper $k$-colorings, the chromatic polynomial, and the Cartesian product of graphs. We end this section by stating two explicit counting problems that we address in Section \ref{sec:TMMEhrhart}, see Problem \ref{prob:ColoringsPnandCn}.

For $m\in \mathbb{Z}_{>0}$, we set $[m]:= \{1,2,\dots,m \}$. A \emph{graph $G$} is a pair $(V,E)=(V(G),E(G))$, where $V$ is the set of \emph{nodes} or \emph{vertices} and $E$ is the set of \emph{edges}. We assume all our graphs to be finite and we globally set $N: = \# V(G)$.  Moreover, we define the \emph{path graph $P_n$} to be the graph on the vertex set $[n]$ with edges $\{i,i+1\}$ for $i \in [n]$. We also define the \emph{cycle graph $C_n$} to be the graph with vertex set $[n]$ and with edge $\{i,i+1\}$ and $\{1,n \}$. A \emph{graph automorphism of a graph $G = (V,E)$} is a permutation $\sigma$ of the vertex set such that $\{i,j \}$ is an edge if and only if $(\sigma(i),\sigma(j))$ is an edge. The set of automorphisms of a graph $G$ together with the composition operation forms a group, which is called the \emph{automorphism group of $G$}.

This article mainly focuses on a special family of graphs, namely the Cartesian product of an arbitrary graph $G$ with either the path graph $P_n$ or the cycle graph $C_n$. 

\begin{definition}
\label{def:CartesianProduct}
Let $G_1 = (V(G_1),E(G_1))$ and $G_2 = (V(G_2),E(G_2))$ be two graphs. The \emph{Cartesian product} $G_1 \times G_2$ (sometimes in the literature also denoted $G_1 \square G_2$) is the graph with vertex set $V(G_1) \times V(G_2)$, and vertices $(u_1,v_1)$ and $(u_2,v_2)$ are connected by an edge if  
\begin{itemize}
\item either $u_1 = u_2$ and $\{v_1,v_2 \} \in E(G_2)$,
\item or if $v_1 = v_2$ and $\{u_1, u_2 \} \in E(G_1)$.
\end{itemize}
\end{definition}
Figure \ref{fig:GraphProduct} illustrates Definition \ref{def:CartesianProduct}. 
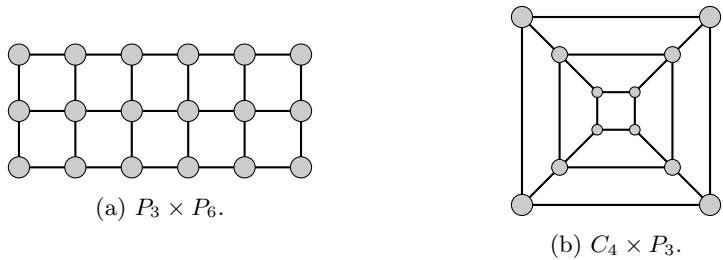
\begin{figure}[t]
\center
\begin{subfigure}{0.5\textwidth}
\centering
\begin{tikzpicture}[darkstyle/.style={circle,draw,fill=gray!40,minimum size=20}, scale = .5]
\draw[step=1.5,thick] (0,-3) grid (7.5,0);
\foreach \x in {0,...,5}
{   \foreach \y in {0,...,2}
    {   \pgfmathtruncatemacro{\nodelabel}{\x+\y*5+1}
        \node[circle,draw=black,fill=white!80!black,minimum size=20, scale=.4] (\nodelabel) at (1.5*\x,-1.5*\y) { };
    }
}
\end{tikzpicture}
\caption{$P_3 \times P_6$.}
\end{subfigure}%
\begin{subfigure}{0.5\textwidth}
\centering
\begin{tikzpicture}[scale = 0.5]
\node[circle,draw=black,fill=white!80!black,minimum size=20, scale=.4] (1) at (1.25,2) { };
\node[circle,draw=black,fill=white!80!black,minimum size=20, scale=.4] (2) at (6.25,2) { };
\node[circle,draw=black,fill=white!80!black,minimum size=20, scale=.4] (3) at (6.25,7) { };
\node[circle,draw=black,fill=white!80!black,minimum size=20, scale=.4] (4) at (1.25,7) { };
\draw[thick] (1)--(2)--(3) -- (4)--(1);

\node[circle,draw=black,fill=white!80!black,minimum size=20, scale=.3] (5) at (2.25,3) { };
\node[circle,draw=black,fill=white!80!black,minimum size=20, scale=.3] (6) at (5.25,3) { };
\node[circle,draw=black,fill=white!80!black,minimum size=20, scale=.3] (7) at (5.25,6) { };
\node[circle,draw=black,fill=white!80!black,minimum size=20, scale=.3] (8) at (2.25,6) { };
\draw[thick] (5) -- (6) -- (7) -- (8) -- (5);

\node[circle,draw=black,fill=white!80!black,minimum size=20, scale=.2] (9) at (3.25,4) { };
\node[circle,draw=black,fill=white!80!black,minimum size=20, scale=.2] (10) at (4.25,4) { };
\node[circle,draw=black,fill=white!80!black,minimum size=20, scale=.2] (11) at (4.25,5) { };
\node[circle,draw=black,fill=white!80!black,minimum size=20, scale=.2] (12) at (3.25,5) { };
\draw[thick] (9) -- (10) -- (11) -- (12) -- (9);

\draw[thick] (1) -- (5) -- (9);
\draw[thick] (2) -- (6) -- (10);
\draw[thick] (3) -- (7) -- (11);
\draw[thick] (4) -- (8) -- (12);
\end{tikzpicture}
\caption{$C_4 \times P_3$.}
\end{subfigure}
\caption{The Cartesian graph product illustrated.}
\label{fig:GraphProduct}
\end{figure}

A \emph{proper $k$-coloring of a graph $G = (V,E)$} is a map $c \colon V \longrightarrow [k]$ such that $c(v) \neq c(u)$ for all $u$, $v$ with $\{ u,v \} \in E$. We refer to the elements of $[k]$ as \emph{colors}. 
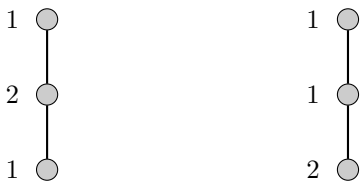
\begin{figure}[H]
\center
\begin{tikzpicture}[darkstyle/.style={circle,draw,fill=gray!40,minimum size=20}, scale = .5]
\node[circle,draw=black,fill=white!80!black,minimum size=20, scale=.4] (1) at (0,4) { };
\node[circle,draw=black,fill=white!80!black,minimum size=20, scale=.4] (2) at (0,2) { };
\node[circle,draw=black,fill=white!80!black,minimum size=20, scale=.4] (3) at (0,0) { };
\draw[thick] (1)--(2)--(3);
\node[left] at (-.5,4) {$1$};
\node[left] at (-.5,2) {$2$};
\node[left] at (-.5,0) {$1$};
\node[circle,draw=black,fill=white!80!black,minimum size=20, scale=.4] (5) at (8,4) { };
\node[circle,draw=black,fill=white!80!black,minimum size=20, scale=.4] (6) at (8,2) { };
\node[circle,draw=black,fill=white!80!black,minimum size=20, scale=.4] (7) at (8,0) { };

\draw[thick] (5)--(6)--(7);

\node[left] at (7.5,4) {$1$};
\node[left] at (7.5,2) {$1$};
\node[left] at (7.5,0) {$2$};
\end{tikzpicture}
\caption{A proper $2$-coloring and a non-proper $2$-coloring of $P_3$.}
\end{figure}

We would like to count proper $k$-colorings. For a graph $G$, we define the \emph{chromatic polynomial of $G$} as the counting function
\[
\chi_G(k) = \# \text{ proper } k\text{-colorings of }G.
\]
As the name suggests, $\chi_G$ agrees with a polynomial. 
\begin{theorem}[{\cite{Whitneychromatic}}]
Let $G$ be a simple graph on $n$ vertices. Then $\chi_G$ is a monic polynomial in $k$ of degree $n$.
\end{theorem}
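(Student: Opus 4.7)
The natural route is deletion-contraction, with induction on the number of edges. Writing $G-e$ for the graph obtained by deleting an edge $e = \{u,v\}$ and $G/e$ for the graph obtained by contracting $e$ (identifying $u$ and $v$, and collapsing any resulting parallel edges so the result is again simple), I would first establish the identity
\[
\chi_G(k) = \chi_{G-e}(k) - \chi_{G/e}(k).
\]
This is proved by bijective bookkeeping: a proper $k$-coloring of $G-e$ either assigns different colors to $u$ and $v$, in which case it is a proper coloring of $G$, or the same color, in which case it corresponds to a proper coloring of $G/e$ (with the shared color being the color of the identified vertex). This partitions the colorings of $G-e$ and yields the claimed identity.

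Next, I would set up the induction on $m = \#E(G)$. The base case $m = 0$ is the edgeless graph on $n$ vertices, for which any assignment $V \to [k]$ is proper, giving $\chi_G(k) = k^n$, clearly a monic polynomial of degree $n$. For the inductive step with $m \geq 1$, pick any edge $e$. Then $G-e$ is a simple graph on $n$ vertices with $m-1$ edges, so by induction $\chi_{G-e}(k)$ is a monic polynomial of degree $n$; and $G/e$ is a simple graph on $n-1$ vertices with at most $m-1$ edges, so $\chi_{G/e}(k)$ is a monic polynomial of degree $n-1$. Subtracting, the leading $k^n$ term is preserved, and $\chi_G(k)$ is a monic polynomial of degree $n$.

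The only genuinely delicate point is that contracting an edge in a simple graph can produce parallel edges, and one must notice that collapsing these to single edges does not change the set of proper colorings (since each edge merely imposes an inequality constraint). Once that small bookkeeping detail is handled, the induction goes through without further obstacles. An alternative route would be to invoke Whitney's expansion $\chi_G(k) = \sum_{S \subseteq E} (-1)^{|S|} k^{c(S)}$, where $c(S)$ is the number of connected components of the spanning subgraph $(V,S)$; the $S = \emptyset$ term contributes $k^n$ and all other terms have strictly smaller degree, giving monicity of degree $n$ immediately.
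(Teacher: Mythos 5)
Your proof is correct, but it takes a genuinely different route from the one the paper uses. At the point where this theorem is stated, the paper simply cites Whitney and gives no argument; the proof the paper actually offers comes later, as an immediate consequence of Theorem~\ref{thm:ChromEhrhart}: with $P=[0,1]^n$ and $\mathcal{H}(G)$ the graphic hyperplane arrangement, one has $\chi_G(t-1)=E^{\circ}_{P^{\circ},\mathcal{H}(G)}(t)$, so the chromatic polynomial is a sum of Ehrhart polynomials of the $n$-dimensional open regions into which $\mathcal{H}(G)$ cuts the cube; each summand is a polynomial of degree $n$, and the leading coefficients add up to the total volume of $[0,1]^n$, namely $1$. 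Your deletion--contraction induction (with the correct observation that collapsing the parallel edges created by contraction changes nothing, since an edge only encodes an inequality constraint) is the classical combinatorial argument. Each approach buys something: deletion--contraction is entirely elementary, needs no geometry, and generalizes at once to the Tutte polynomial; the inside-out-polytope approach is the one this paper is built on, and from it Stanley's reciprocity (Proposition~\ref{prop:ClassicChromaticReciprocity}) and the restricted reciprocity of Theorem~\ref{thm:restrictedReciprocity} fall out essentially for free, which they do not from the recurrence alone. Your alternative via the subset expansion $\chi_G(k)=\sum_{S\subseteq E}(-1)^{|S|}k^{c(S)}$ is also fine, and is in fact closer in spirit to the M\"obius-function formula (\ref{eq:openmoebius}) that underlies the paper's geometric proof.
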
 
This polynomial extends the domain of $\chi$ from $\mathbb{Z}_{>0}$ to $\mathbb{R}$. Therefore, it is natural to ask whether other evaluations also have a combinatorial interpretation. Stanley \cite{Stanley12} famously gave an interpretation for evaluations at negative integers by relating it to acyclic orientations. An \emph{acyclic orientation} of a graph $G$ is an orientation of the edges such that the directed graph does not contain any cycles. 

\begin{figure}[H]
\center
\begin{tikzpicture}[darkstyle/.style={circle,draw,fill=gray!40,minimum size=20}, scale = .5]
\node[circle,draw=black,fill=white!80!black,minimum size=20, scale=.4] (1) at (4,0) { };
\node[circle,draw=black,fill=white!80!black,minimum size=20, scale=.4] (2) at (2,2) { };
\node[circle,draw=black,fill=white!80!black,minimum size=20, scale=.4] (3) at (0,0) { };
\draw[thick] (1)--(2)--(3) -- (1);

\node[circle,draw=black,fill=white!80!black,minimum size=20, scale=.4] (5) at (6,0) { };
\node[circle,draw=black,fill=white!80!black,minimum size=20, scale=.4] (6) at (10,0) { };
\node[circle,draw=black,fill=white!80!black,minimum size=20, scale=.4] (7) at (8,2) { };

\draw[->, thick] (5)--(6);
\draw[->, thick] (7)--(6);
\draw[->, thick] (5)--(7);

\node[circle,draw=black,fill=white!80!black,minimum size=20, scale=.4] (8) at (12,0) { };
\node[circle,draw=black,fill=white!80!black,minimum size=20, scale=.4] (9) at (16,0) { };
\node[circle,draw=black,fill=white!80!black,minimum size=20, scale=.4] (10) at (14,2) { };

\draw[->, thick] (8)--(9);
\draw[->, thick] (9)--(10);
\draw[->, thick] (10)--(8);

\end{tikzpicture}
\caption{$G$, an acyclic orientation of $G$, and a cyclic orientation of $G$.}
\end{figure}
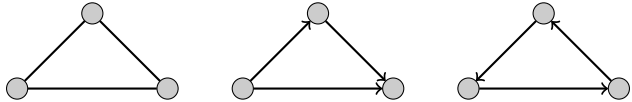

Given an acyclic orientation $\alpha$ of a graph $G$ and given a (not necessarily proper $k$-coloring) $c$, we say that $(\alpha, c)$ are compatible if $c_j \geq c_i$, where $c_l := c(l)$, whenever the orientation of the edge $\{i,j \}$ is $(i,j)$.  We give Beck's and Zaslavsky's reformulation of \cite[Theorem 1.2]{Stanley18}:

\begin{proposition}\cite[Cor. 5.5]{BeckZaslavsky06}
\label{prop:ClassicChromaticReciprocity}
The number of pairs $(\alpha,c)$ consisting of an acyclic orientation of an ordinary graph $G$ and a compatible $k$-coloring equals $(-1)^n \chi_G(-k)$, where $n$ is the number of vertices of $G$. In particular, $(-1)^n \chi_G(-1)$ equals the number of acyclic orientations of $G$. 
\end{proposition}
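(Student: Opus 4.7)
My plan is to prove this via the inside-out Ehrhart framework that the authors will rely on throughout the paper, since it fits the geometric flavor of the manuscript better than the classical deletion-contraction argument of Stanley.

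The first step is to set up the graphical hyperplane arrangement $\mathcal{H}_G = \{H_{ij} : \{i,j\} \in E(G)\}$ inside the closed cube $[1,k]^n \subset \mathbb{R}^n$, where $H_{ij} = \{x \in \mathbb{R}^n : x_i = x_j\}$. A lattice point $c \in [k]^n$ corresponds to a proper $k$-coloring exactly when $c$ avoids every hyperplane in $\mathcal{H}_G$. Hence $\chi_G(k)$ is precisely the count of lattice points in the open inside-out polytope $([1,k]^n, \mathcal{H}_G)$. This identifies $\chi_G$ with the open Ehrhart quasi-polynomial of this inside-out polytope; since the cube is a lattice polytope and its dilates cover all coloring counts as $k$ varies, the quasi-polynomial is in fact a polynomial.

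Second, I would invoke Ehrhart–Macdonald reciprocity in its inside-out form: if $E^{\circ}(k)$ counts lattice points in the open complement (open regions, scaled appropriately), then
\begin{equation*}
(-1)^n E^{\circ}(-k) \;=\; \sum_{R \text{ closed region}} \#(R \cap \mathbb{Z}^n),
\end{equation*}
where the sum on the right counts lattice points in each closed region of the dilated arrangement with multiplicity equal to the number of closed regions containing the point. In our setting, this gives $(-1)^n \chi_G(-k) = \sum_R \#\bigl(R \cap [k]^n \cap \mathbb{Z}^n\bigr)$.

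Third, I would translate the right-hand side into pairs $(\alpha,c)$. Each region of $\mathbb{R}^n \setminus \mathcal{H}_G$ is cut out by a consistent choice of inequality $x_i < x_j$ or $x_j < x_i$ for every edge $\{i,j\}$; because the region is nonempty, this linear preorder has no directed cycle, so it defines an acyclic orientation $\alpha$, and conversely every acyclic orientation yields one nonempty region. A lattice point $c$ in the closed region for $\alpha$ is exactly a $k$-coloring satisfying $c_i \leq c_j$ whenever $\alpha$ orients $\{i,j\}$ as $(i,j)$ — that is, a coloring compatible with $\alpha$ in the sense defined before the proposition. Summing over all closed regions therefore enumerates all pairs $(\alpha,c)$. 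Setting $k=1$ recovers the acyclic orientation count as the corollary. The main technical obstacle is matching the boundary multiplicities: a lattice point $c$ with $c_i = c_j$ on some edges lies in several closed regions, and I must verify that the number of regions whose closure contains $c$ equals the number of acyclic orientations compatible with $c$ — this boils down to checking that the restriction of $\alpha$ to the subgraph of equality edges can be any acyclic orientation of that subgraph, and that different choices correspond to different regions whose closure contains $c$.
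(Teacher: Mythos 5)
The paper does not reprove this proposition (it is cited to Beck and Zaslavsky), and your inside-out-polytope route is the same one the paper uses for its generalization, Theorem~\ref{thm:restrictedReciprocity}, as well as how Beck and Zaslavsky prove their Corollary~5.5 — so there is no genuinely different argument here. Two remarks. First, the step you flag as the ``main technical obstacle'' is automatic once open regions of the graphic arrangement are identified with acyclic orientations: a lattice point $c$ lies in the \emph{closed} region of $\alpha$ precisely when $c_i \le c_j$ for every $\alpha$-oriented edge $(i,j)$, which is verbatim the compatibility condition; summing lattice counts over closed regions therefore enumerates pairs $(\alpha,c)$ directly, and the equality-subgraph bookkeeping you sketch is what Stanley's original argument does when one instead fixes $c$ and counts compatible $\alpha$'s, but it is not needed here. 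Second, $[1,k]^n$ is not a dilate of a fixed lattice polytope, so the cleaner framing is $P=[0,1]^n$ with $\chi_G(t-1)=E^{\circ}_{P^{\circ},\mathcal{H}(G)}(t)$ (the paper's Theorem~\ref{thm:ChromEhrhart}); your displayed reciprocity identity then follows by substituting $t=k-1$ in $E^{\circ}_{P^{\circ},\mathcal{H}}(-t)=(-1)^n E_{P,\mathcal{H}}(t)$ and translating the lattice by $\mathbf{1}$.
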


We will later come across what we call a \emph{$c'$-restricted}, proper $k$-coloring. Let $\Gamma= (V,E)$ be a graph and fix a proper $k'$-coloring $c' \colon V' \rightarrow \{1,2,\dots,k' \}$ on the \emph{induced subgraph $\left.\Gamma \right|_{V'}$} of a subset $V' \subset V(\Gamma)$, i. e., a proper $k'$-coloring on the graph $\left.\Gamma \right|_{V'}=(V',E')$, where $E' = \{\{i,j\}\colon i ,j \in V', \{i,j \} \in E \}$. Then, for $k \geq k'$, we define the \emph{$c'$-restricted chromatic polynomial }
\begin{equation*}
\chi_{c', \Gamma}(k)= \# \text{proper $k$-colorings c of }\Gamma \text{ such that }\left.c \right|_{V'} = c'.
\end{equation*}
This name will be justified in Theorem \ref{thm:restrictedReciprocity}, where we will also state a restricted analogue of Proposition \ref{prop:ClassicChromaticReciprocity}. A not necessarily proper $c'$-restricted $k$-coloring is \emph{compatible} with an acyclic orientation $\alpha$ if 
\begin{enumerate}
\item $(\alpha, c)$ are compatible in the usual sense,
\item and if $v\in V \setminus V'$ is adjacent to $u_1$, $u_2$, $\dots$, $u_s \in V'$ with $c'(u_1)=\dots = c'(u_s)$, then the orientations of the edges $\{v,u_i \}$ have to be the same for all $i$. 
\end{enumerate}

We are now ready to state the main problems of the first part of this article:
\begin{problem}
\label{prob:ColoringsPnandCn}
\begin{enumerate}
\item How many proper $k$-colorings does the Cartesian graph product $G \times P_n$ have?
\item Asymptotically, how many proper $k$-colorings does the Cartesian graph product $G \times C_n$ have?
\end{enumerate}
\end{problem}
We would like to remark that both $k$ and $n$ are variables. Hence, there are two prominent special cases which we will address in Section \ref{sec:Special Cases}:
\begin{enumerate}
\item $k$ is fixed and $n$ varies (see Section \ref{sec:TMM}),
\item $n$ is fixed and $k$ varies (see Section \ref{sec:InsideOut}).
\end{enumerate}
\section{Prominent Special Cases}
\label{sec:Special Cases}
\subsection{Transfer-Matrix Methods}
\label{sec:TMM}
In this section, we want to introduce a transfer-matrix method and apply it to Problem~\ref{prob:ColoringsPnandCn} in the case where $k$ is fixed. We follow the notation and the results of \cite[Section 4.7]{Stanley12}. 

One of the most basic applications of transfer-matrix methods is counting the number of walks in a given graph. A \emph{walk $\Gamma$ of length $n$}  from node $u$ to node $v$ is a sequence $u=v_0 e_1v_1e_2v_2 \dots e_n v_n= v$ of edges and vertices such that the endpoints of $e_i$ are $v_{i-1}$ and $v_i$. By abuse of notation, we will abbreviate this notation to $e_1e_2\dots e_n$. A walk where $u=v$ is called a \emph{loop}. Let $w\colon E \rightarrow \mathbb{C}$ be a weight function. If $\Gamma=e_1 e_2 \dots e_n$ is a walk, then the \emph{weight of $\Gamma$} is defined as $w(\Gamma) = \prod_{i \in [n]} w(e_i)$.
We will now define a matrix $A(n)$ by
\[
A_{i,j}(n) := \sum_{\Gamma} w(\Gamma),
\]
where the sum ranges over all walks of length $n$ from vertex $v_i$ to vertex $v_j$. Moreover, we define another matrix $A$ by
\[
A_{i,j} := \sum_{e \in E} w(e),
\]
where the sum ranges over all edges going from vertex $v_i$ to vertex $v_j$.
$(A)_{i,j}$ is called the \emph{adjacency matrix of $G$}, but we will sometimes call it the \emph{transfer matrix of $G$}. The following theorem forms one of the building blocks of transfer-matrix methods:
\begin{theorem}\cite[Thm. 4.7.1]{Stanley12}\\
Let $n\in \mathbb{Z}_{>0}$. Then $A^n_{i,j} = A_{i,j}(n)$, where we define $A^0 := I$ and where $A^n_{i,j}$ is the $(i,j)$-entry of the matrix $A^n$.
\end{theorem}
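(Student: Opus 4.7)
The plan is to prove the identity $A^n_{i,j}=A_{i,j}(n)$ by induction on $n$, exploiting the fact that matrix multiplication mirrors the concatenation of walks and that the weight of a walk is multiplicative in its edges.

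For the base case, I would handle $n=0$ and $n=1$ together. When $n=0$, the definition $A^0:=I$ gives $A^0_{i,j}=\delta_{i,j}$, and this matches $A_{i,j}(0)$ since the only walk of length $0$ from $v_i$ to $v_j$ is the empty walk when $i=j$ (whose weight is the empty product $1$) and no walk exists otherwise. When $n=1$, the entry $A_{i,j}=\sum_{e}w(e)$ ranges exactly over all edges from $v_i$ to $v_j$, and these edges are in bijection with the walks of length $1$ from $v_i$ to $v_j$, with matching weights.

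For the inductive step, assume the identity holds for $n-1$. The key observation is that every walk $\Gamma=e_1e_2\cdots e_n$ of length $n$ from $v_i$ to $v_j$ decomposes \emph{uniquely} as a walk $\Gamma'=e_1\cdots e_{n-1}$ of length $n-1$ from $v_i$ to some intermediate vertex $v_\ell$, followed by a single edge $e_n$ from $v_\ell$ to $v_j$; moreover $w(\Gamma)=w(\Gamma')\,w(e_n)$ by definition. Summing over all choices of $\ell$ and using the induction hypothesis yields
\[
A_{i,j}(n)=\sum_{\ell}A_{i,\ell}(n-1)\,A_{\ell,j}=\sum_{\ell}A^{n-1}_{i,\ell}\,A_{\ell,j}=\bigl(A^{n-1}\cdot A\bigr)_{i,j}=A^n_{i,j},
\]
which completes the induction.

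There is no real obstacle here: the statement is essentially bookkeeping, and the only thing to be slightly careful about is that the decomposition of a walk into ``first $n-1$ edges'' plus ``last edge'' is a true bijection (every such pair $(\Gamma',e_n)$ with matching endpoints gives a unique walk $\Gamma$, and vice versa), so that the sum over $\ell$ indeed partitions the walks of length $n$. Once that bijection is in place, the multiplicativity of $w$ immediately matches the algebraic definition of matrix multiplication.
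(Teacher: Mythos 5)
Your proof is correct, but note that the paper does not actually prove this statement: it is quoted verbatim from Stanley \cite[Thm.~4.7.1]{Stanley12} as background, so there is no ``paper's own proof'' to compare against. Your inductive argument is the standard one and is equivalent to Stanley's treatment, which expands $\left(A^n\right)_{i,j}=\sum_{k_1,\dots,k_{n-1}}A_{i,k_1}A_{k_1,k_2}\cdots A_{k_{n-1},j}$ directly and identifies each summand with the weight of a walk through the intermediate vertices $k_1,\dots,k_{n-1}$; your version repackages that expansion as an induction via the bijection between length-$n$ walks and pairs (length-$(n-1)$ walk, final edge). Both routes hinge on the same two facts you correctly isolate --- the walk decomposition is a bijection partitioned by the penultimate vertex, and $w$ is multiplicative along a walk --- so the argument is sound and complete.
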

\begin{remark}
In particular, if $w \equiv 1$, then $A^n_{i,j}$ counts the number of walks from $v_i$ to $v_j$ of length $n$. Moreover, the number of \emph{closed} walks is counted by trace of $A^n$. Since trace is the sum of the eigenvalues, and since the biggest eigenvalue $\lambda_{\max}$ is positive by the Perron--Frobenius theorem, the trace is asymptotically dominated by $\lambda_{\max}^n$.
\end{remark}
Now let us fix $k$ and apply transfer-matrix methods to Problem \ref{prob:ColoringsPnandCn}. Let $G$ be a graph with vertex set $V(G) = \left\{v_1, \dots, v_p \right\}$. To count the number of proper $k$-colorings of $G \times P_n$, we associate a new graph $M_G$ to $G$. We define $M_G$ to be the graph with
\begin{itemize}
\item vertex set $\mathcal{C}$, where $\mathcal{C}$ is the set of proper $k$-colorings of $G$,
\item and where two vertices $(c_1(v_1),,\dots,c_1(v_p))$ and $(c_2(v_1), \dots,c_2(v_p))$ are connected if $c_1(v_i)\neq c_2(v_i)$ for all $i \in [p]$.
\end{itemize}
Figure \ref{fig:TMM Graph} shows $M_{P_3}$ for $3$ colors. 

\begin{figure}[H]
\center
\begin{tikzpicture}[scale=.5]
\node[circle,draw=black,fill=white!80!black,minimum size=20, scale=.75]
        (1) at (3,9) {(1,2,1)};
\node[circle,draw=black,fill=white!80!black,minimum size=20, scale=.75]
        (2) at (7,7) {(1,3,1)};
\node[circle,draw=black,fill=white!80!black,minimum size=20, scale=.75]
        (3) at (9,3) {(2,1,2)};

\node[circle,draw=black,fill=white!80!black,minimum size=20, scale=.75]
        (4) at (-3,9) {(2,3,2)};
\node[circle,draw=black,fill=white!80!black,minimum size=20, scale=.75]
        (5) at (-7,7) {(3,1,3)};
\node[circle,draw=black,fill=white!80!black,minimum size=20, scale=.75]
        (6) at (-9,3) {(3,2,3)};

\node[circle,draw=black,fill=white!80!black,minimum size=20, scale=.75]
        (7) at (-3,-9) {(1,2,3)};
\node[circle,draw=black,fill=white!80!black,minimum size=20, scale=.75]
        (8) at (-7,-7) {(1,3,2)};
\node[circle,draw=black,fill=white!80!black,minimum size=20][scale=.75]
        (9) at (-9,-3) {(2,1,3)};

\node[circle,draw=black,fill=white!80!black,minimum size=20, scale=.75]
        (10) at (3,-9) {(2,3,1)};
\node[circle,draw=black,fill=white!80!black,minimum size=20, scale=.75]
        (11) at (7,-7) {(3,1,2)};
\node[circle,draw=black,fill=white!80!black,minimum size=20, scale=.75]
        (12) at (9,-3) {(3,2,1)};

\draw (1) -- (4);
\draw (1) -- (3);
\draw (1) -- (9);
\draw (1) -- (11);
\draw (1) -- (5);

\draw (2) -- (6);
\draw (2) -- (9);
\draw (2) -- (11);
\draw (2) -- (3);
\draw (2) -- (5);

\draw (3) -- (7);
\draw (3) -- (6);
\draw (3) -- (12);

\draw (4) -- (5);
\draw (4) -- (6);
\draw (4) -- (7);
\draw (4) -- (12);

\draw (5) -- (10);
\draw (5) -- (8);

\draw(6) -- (10);
\draw (6) -- (8);

\draw (7) -- (10);
\draw (7) -- (11);

\draw (8) -- (9);
\draw (8) -- (12);

\draw (9) -- (12);

\draw (10) -- (11);
\end{tikzpicture}
\caption{$M_{P_3}$.}
\label{fig:TMM Graph}
\end{figure}

This construction establishes a connection between $k$-colorings of $G \times P_n$ ($G \times C_n$) and walks (closed walks) of length $n$ in $M_G$, respectively. By abuse of notation, let $u^1u^2\dots u^n$ be a walk on $M_G$. By construction, this walk corresponds to the proper $k$-coloring of $G\times P_n$, where $G \times \{ i\}$ is colored by $u^i$ for all $i$. Similarly, if the walk is closed and thus $u^1 = u^n$, we get a corresponding proper $k$-coloring of $G \times C_{n-1}$. Therefore, we can count the number of $k$-colorings of $G \times P_n$ by computing powers of the adjacency matrix $A_{M_G}$ of $M_G$. Moreover, we can asymptotically count the number of colorings of $G\times C_n$ by analyzing the biggest eigenvalue of $A_{M_G}$.

The size of the transfer matrix $A_{M_G}$ is $\chi_G(k) \times \chi_G(k)$, since the number of vertices of $M_G$ is the number of proper $k$-colorings of $G$. Figure \ref{fig:bigMatrix} shows $A_{M_{P_3}}$ for $4$ colors.
\begin{figure}[H]
\includegraphics[width=\textwidth]{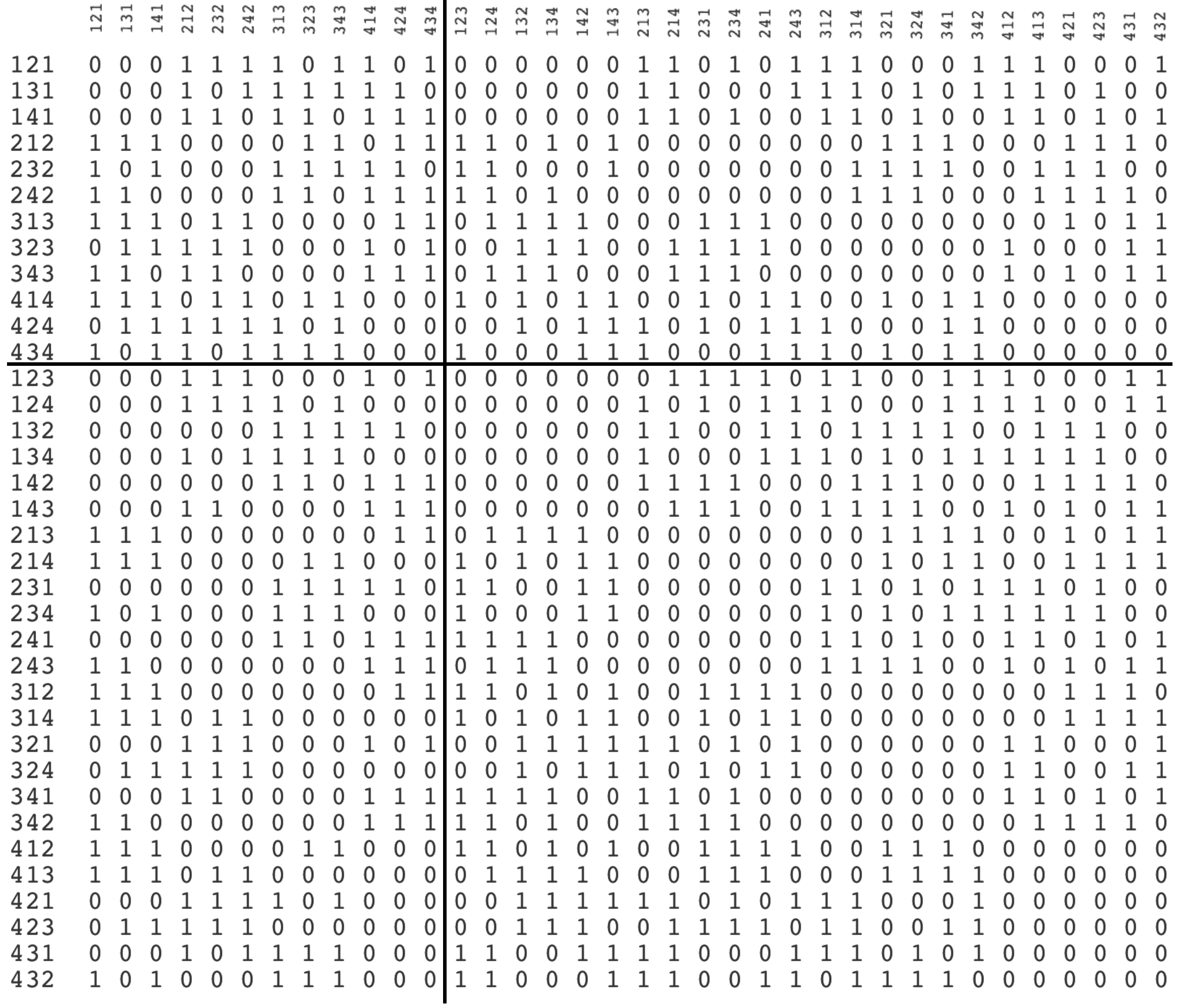}
\caption{Adjacency matrix of $M_{P_3}$ for $4$ colors.}
\label{fig:bigMatrix}
\end{figure}
In Section \ref{sec:TMMEhrhart}, we establish a \emph{compactified} transfer matrix $L$ whose size does \emph{not} depend on $k$ and which can be used to count the number of proper $k$-colorings of $G \times P_n$ for \emph{all} $k$. Furthermore, we show that the biggest eigenvalue of $L$ equals the biggest eigenvalue of $A_{M_G}$ for all $k$. We also give a combinatorial and a geometric interpretation for the entries of $L$.

\subsection{Ehrhart Theory and Inside-Out Polytopes}
\label{sec:InsideOut}
Throughout this section, let $n$ be fixed. This implies that the size of the graphs $G \times P_n$ and $G \times C_n$ is fixed, too. Under this assumption, Problem \ref{prob:ColoringsPnandCn} reduces to computing the chromatic polynomial of a given graph. We will use the perspective of inside-out polytopes and Ehrhart theory developed by Beck and Zaslavsky \cite{BeckZaslavsky06} to understand chromatic polynomials. For a nice introduction to Ehrhart theory, we refer the interested reader to \cite{CCD}.

The fundamental geometric objects in this section are polytopes. A \emph{(convex) polytope $P\subset \mathbb{R}^d$} is the convex hull of finitely many points $a_1$, $a_2$ ,$\dots$, $a_s\in \mathbb{R}^d$, and we write
\[
P = \operatorname{conv} \{a_1, \dots, a_s \}: =\left \{ \sum_{i =1}^s \lambda_i a_i \colon \, \sum_{i=1}^s \lambda_i =1, \lambda_i \geq 0  \text{ for all } i \in [s]\right\}.
\]
The inclusion-minimal set $\{v_1,v_2,\dots,v_r \}$ such that $P = \operatorname{conv} \{v_1,v_2,\dots,v_r \}$ is called the \emph{vertex set} and its elements are called the \emph{vertices} of $P$. If the vertex set is contained in $\mathbb{Z}^d$, we say that $P$ is a \emph{lattice polytope}.

If $d$ is the dimension of the affine hull of $P$, we call $P$ a $d$-polytope. Polytopes form an intruiging subject in itself, and we refer the interested reader to \cite{Ziegler}. For positive integers $t$, we define the counting function
\[
E_{P}(t) : = \# \left(tP \cap \mathbb{Z}^d \right).
\]
The function $E_{P}(t)$ is called the \emph{Ehrhart function} of $P$. Ehrhart \cite{Ehrhart} famously proved that $E_P$ is a quasipolynomial if the vertices of $P$ are rational and it is even a polynomial if the vertices are integer points. The leading coefficient of the Ehrhart (quasi-)polynomial is given by the \emph{volume} of the polytope and if $P$ is a lattice polytope, the constant coefficient equals 1. The following result was conjectured by Ehrhart, but first proven in full generality by Macdonald \cite[Prop. 4.1]{Macdonald}.
\begin{theorem}[Ehrhart-Macdonald reciprocity]
Let $P$ be a $d$-dimensional rational polytope. Then
\begin{equation}
\label{eq:Ehrhart-Macdonald reciprocity}
E_{P}(-t) = (-1)^d E_{P^{\circ}}(t),
\end{equation} 
where $E_{P^{\circ}}(t)$ counts the number of integer points in the interior of $tP$.
\end{theorem}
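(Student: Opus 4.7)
The plan is to follow the classical Ehrhart--Macdonald strategy: triangulate $P$ into rational simplices, reduce to the simplex case, and extract the reciprocity from an involution on the fundamental parallelepiped of the associated cone.

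First, I would triangulate $P$ into rational $d$-simplices $\sigma_1,\ldots,\sigma_m$ using only the vertices of $P$, and then refine this into a \emph{disjoint} half-open decomposition $P = \bigsqcup_j \widetilde\sigma_j$ in which each open face of the triangulation is assigned to exactly one piece. Because the Ehrhart counting function is additive over disjoint unions, this reduces the theorem to the case of a single half-open simplex. A parallel decomposition of $P^\circ$ uses the same pieces but with the half-open structure flipped on all faces that do \emph{not} lie on $\partial P$; verifying that this bookkeeping reproduces the correct interior-versus-closure count is where I expect the main care to be required.

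For a rational $d$-simplex $\sigma$ with vertices $v_0,\ldots,v_d$, I would embed $\sigma$ at height $1$ in $\mathbb{R}^{d+1}$ via $\widehat v_i := (v_i,1)$ and form the cone $C = \mathbb{R}_{\ge 0}\widehat v_0 + \cdots + \mathbb{R}_{\ge 0}\widehat v_d$. Lattice points of $C$ at height $t$ are in bijection with $t\sigma \cap \mathbb{Z}^d$, and the standard fundamental-parallelepiped decomposition of $C$ yields
\[
\mathrm{Ehr}_\sigma(z) \;=\; \frac{h(z)}{(1-z)^{d+1}}, \qquad \mathrm{Ehr}_{\sigma^\circ}(z) \;=\; \frac{h^\circ(z)}{(1-z)^{d+1}},
\]
where $h$ and $h^\circ$ enumerate, by height, the lattice points of the half-open parallelepipeds $\Pi = \{\sum \lambda_i \widehat v_i : 0 \le \lambda_i < 1\}$ and $\Pi^\circ = \{\sum \lambda_i \widehat v_i : 0 < \lambda_i \le 1\}$ respectively.

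The heart of the argument is the involution $q \mapsto \widehat v_0 + \cdots + \widehat v_d - q$, which is a bijection $\Pi \cap \mathbb{Z}^{d+1} \to \Pi^\circ \cap \mathbb{Z}^{d+1}$ sending a point of height $j$ to one of height $d+1-j$. This immediately gives $h^\circ(z) = z^{d+1} h(1/z)$, hence the rational-function identity
\[
\mathrm{Ehr}_\sigma(1/z) \;=\; (-1)^{d+1}\, \mathrm{Ehr}_{\sigma^\circ}(z).
\]
Translating this via the standard dictionary relating the rational generating functions of a quasipolynomial at positive and negative arguments yields $E_\sigma(-t) = (-1)^d E_{\sigma^\circ}(t)$ for all $t \ge 1$; summing over the half-open decomposition from the first step produces the claimed identity for $P$. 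The main obstacle, as already flagged, is the boundary bookkeeping in the reduction to simplices; once that is settled, the remaining steps are purely local computations on a single rational cone.
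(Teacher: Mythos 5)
The paper does not actually prove this theorem; it is stated with a citation to Macdonald's paper and used as a black box. Your proposal sketches the modern standard proof (triangulation, coning, fundamental parallelepiped, involution), which is a legitimate route, but as written it is only correct for \emph{lattice} polytopes, not for the \emph{rational} polytopes the statement is about.

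The gap is in the cone step. If $\sigma$ is a rational simplex with non-integral vertices $v_0,\dots,v_d$, then the lifted points $\widehat v_i=(v_i,1)$ are not lattice points, and three things you rely on break down. First, the identity $\mathrm{Ehr}_\sigma(z)=h(z)/(1-z)^{d+1}$ with $h$ a polynomial is false: already for $\sigma=[0,\tfrac12]\subset\mathbb{R}$ one has $\mathrm{Ehr}_\sigma(z)=1/\bigl((1-z)(1-z^2)\bigr)$, not $h(z)/(1-z)^2$. Second, $\Pi=\{\sum\lambda_i\widehat v_i:0\le\lambda_i<1\}$ is not a fundamental parallelepiped in the sense required for the tiling argument, because the $\widehat v_i$ are not lattice vectors. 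Third, and most importantly, the involution $q\mapsto \widehat v_0+\cdots+\widehat v_d-q$ need not send $\mathbb{Z}^{d+1}$ to $\mathbb{Z}^{d+1}$, since $\sum\widehat v_i$ is generally not a lattice point. The fix is standard but must be made explicit: replace each $\widehat v_i$ by the primitive lattice generator $w_i$ of the ray $\mathbb{R}_{\ge 0}\widehat v_i$ (equivalently $w_i=p_i\widehat v_i$ where $p_i$ is the denominator of $v_i$), work with the true fundamental parallelepiped $\{\sum\lambda_i w_i:0\le\lambda_i<1\}$, accept the denominator $\prod_i(1-z^{p_i})$ in the Ehrhart series, and replace the height flip $j\mapsto d+1-j$ by $j\mapsto\sum_i p_i-j$. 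Then $q\mapsto\sum_i w_i-q$ does preserve $\mathbb{Z}^{d+1}$, and your argument --- together with Stanley reciprocity for rational generating functions and the half-open decomposition bookkeeping you already flagged --- goes through.
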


The following definitions are taken from $\cite{BeckZaslavsky06}$. A \emph{hyperplane arrangement $\mathcal{H}$} is a set of finitely many linear or affine hyperplanes in $\mathbb{R}^d$. An \emph{open region }is a connected component of $\mathbb{R}^d \setminus \bigcup_{H \in \mathcal{H}} H$. A \emph{closed region} is the topological closure of an open region. Moreover, we define the \emph{intersection semilattice}
\[
\mathcal{L}(\mathcal{H}) : = \left\{ \bigcap \mathcal{S} \colon \mathcal{S} \subset \mathcal{H} \text{, and} \bigcap \mathcal{S} \neq \emptyset\right\},
\]
where the order is given by reverse inclusion. The minimal element is $\hat 0 = \mathbb{R}^d$. The elements of $\mathcal{L}(\mathcal{H})$ are sometimes called \emph{flats}. $\mathcal{L}(\mathcal{H})$ is therefore a partially ordered set --- or poset for short ---  and we recursively define the \emph{M\"obius function $\mu\colon \mathcal{L}(\mathcal{H}) \times \mathcal{L}(\mathcal{H}) \rightarrow \mathbb{Z}$} by
\[
\mu(r,s): = 
\begin{cases}
0 &\text{if } r\nleq s,\\
1 &\text{if } r = s,\\
-\sum_{r \leq u <s }\mu(r,u) &\text{if } r<s.\\
\end{cases}
\]
The \emph{characteristic polynomial $p_{\mathcal{H}}$} of a hyperplane arrangement $\mathcal{H}$ is defined as
\[
p_{\mathcal{H}}(\lambda) : = 
\begin{cases}
0 &\text{if }\mathcal{H} \text{ contains the degenerate hyperplane }\mathbb{R}^d,\\
\sum_{s \in \mathcal{L}(\mathcal{H})} \mu(\hat 0, s) \lambda^s &\text{otherwise}.\\
\end{cases}
\]
June Huh has shown that the coefficients of $p_{\mathcal{H}}$ are alternating in sign and the absolute value of the coefficients form a log-concave sequence, see \cite[Thm. 3]{Huh}.

Let $P\subset \mathbb{R}^d$ be a rational, closed, convex $d$-polytope and let $\mathcal{H}$ be an arrangement of \emph{rational} hyperplanes that meets $P$ transversally, i. e., every flat 
\[
u \in \left\{\bigcap S \colon \, S \subset \mathcal{H} \text{ and } \bigcap S \neq \emptyset \right\}
\]
that intersects the topological closure of $P$ also intersects the interior $P^{\circ}$. Here rational means that all vertices of $P$ lie in $\mathbb{Q}^d$ and all hyperplanes in $\mathcal{H}$ are specified by equations with rational coefficients. Following \cite{BeckZaslavsky06}, we call $(P,\mathcal{H})$ a \emph{rational inside-out polytope}. A \emph{region} of $(P,\mathcal{H})$ is one of the components of $P \setminus \bigcup \mathcal{H}$ or the closure of one such component. If the vertex set is a subset of $\mathbb{Z}^d$, we call $(P,\mathcal{H})$ a \emph{lattice inside-out polytope}. The \emph{multiplicity of $x \in \mathbb{R}^d$ with respect to $\mathcal{H}$ is}
\[
m_{\mathcal{H}}(x): = \# \text{closed regions of }\mathcal{H} \text{ containing }x. 
\]
The \emph{multiplicity with respect to $(P,\mathcal{H})$} is
\[
m_{P,\mathcal{H}}(x) : = 
\begin{cases}
\# \text{closed regions of } (P,\mathcal{H}) \text{ that contain }x, &\text{ if }x\in P,\\
0 & \text{otherwise.}\\
\end{cases}
\]
We define the \emph{closed Ehrhart quasipolynomial}
\[
E_{P,\mathcal{H}}(t) : = \sum_{x \in \mathbb{Z}^d} m_{tP,\mathcal{H}} (x),
\] 
where $t\in \mathbb{Z}_{\geq 1}$ and $tP$ denotes the $t^{\text{th}}$ dilate of $P$. Similarly, we define the \emph{open Ehrhart quasipolynomial}
\[
E^{\circ}_{P,\mathcal{H}}(t) := \# \left( \mathbb{Z}^d \cap t \left[P \setminus \bigcup H \right] \right)\text{.}
\]
With the notation from above, we have:
\begin{theorem}[{\cite[Thm. 3.1]{BeckZaslavsky06}}]
\label{thm:Moebius}
Let $P\subset \mathbb{R}^d$ be a $d$-dimensional polytope and let $\mathcal{H}$ be a hyperplane arrangement not containing the degenerate hyperplane $\mathbb{R}^d$. Then
\begin{equation}
\label{eq:openmoebius}
E^{\circ}_{P,\mathcal{H}}(t) = \sum_{u \in \mathcal{L}(\mathcal{H})} \mu(\hat 0,u) \# \left(\mathbb{Z}^d \cap t P \cap u \right)
\end{equation}
and if $\mathcal{H}$ is transverse to $P$, we have
\begin{equation}
\label{eq:closedmoebius}
E_{P,\mathcal{H}}(t)=\sum_{u \in \mathcal{L}(\mathcal{H})} \left|\mu(\hat 0,u)\right| \# \left(\mathbb{Z}^d \cap t P \cap u \right),
\end{equation}
where $\mu$ is the M\"obius function of $\mathcal{L}(\mathcal{H})$.
\end{theorem}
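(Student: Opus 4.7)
The plan is to prove both identities by a double-counting argument centered on the \emph{support flat} map
\[
u \colon \mathbb{R}^d \to \mathcal{L}(\mathcal{H}), \qquad u(x) := \bigcap\{H \in \mathcal{H} : x \in H\},
\]
with the convention $u(x) = \hat 0 = \mathbb{R}^d$ if no $H \in \mathcal{H}$ contains $x$. The key observation is that for any flat $v \in \mathcal{L}(\mathcal{H})$, one has $x \in v$ if and only if $u(x) \subseteq v$, which in the reverse-inclusion order reads $u(x) \geq v$. Consequently,
\[
\#(\mathbb{Z}^d \cap tP \cap v) = \#\{x \in \mathbb{Z}^d \cap tP : u(x) \geq v\}.
\]

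For the open formula (\ref{eq:openmoebius}), I would set $g(u) := \#\{x \in \mathbb{Z}^d \cap tP : u(x) = u\}$ and note that $g(\hat 0) = E^{\circ}_{P,\mathcal{H}}(t)$ by definition, since $u(x)=\hat 0$ means $x$ lies on none of the hyperplanes in $\mathcal{H}$. The identity above rewrites as $\#(\mathbb{Z}^d \cap tP \cap u) = \sum_{v \geq u} g(v)$, so Möbius inversion on $\mathcal{L}(\mathcal{H})$ yields
\[
g(\hat 0) = \sum_{v \in \mathcal{L}(\mathcal{H})} \mu(\hat 0, v)\,\#(\mathbb{Z}^d \cap tP \cap v),
\]
which is exactly (\ref{eq:openmoebius}).

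For the closed formula (\ref{eq:closedmoebius}), the bulk of the work is to prove the pointwise identity
\[
m_{tP,\mathcal{H}}(x) = \sum_{v \leq u(x)} |\mu(\hat 0, v)| \qquad \text{for every } x \in \mathbb{Z}^d \cap tP,
\]
and then to sum over $x$, interchanging the order of summation using the observation above. To establish the pointwise identity, I would pass to the localized arrangement $\mathcal{H}^{u(x)} := \{H \in \mathcal{H} : H \supseteq u(x)\}$. Near $x$, closed regions of $\mathcal{H}$ containing $x$ biject with chambers of $\mathcal{H}^{u(x)}$; by Zaslavsky's theorem the number of such chambers equals $\sum_{v \in \mathcal{L}(\mathcal{H}^{u(x)})} |\mu(\hat 0, v)|$, and $\mathcal{L}(\mathcal{H}^{u(x)})$ is precisely the interval $[\hat 0, u(x)]$ of $\mathcal{L}(\mathcal{H})$ (whose restricted Möbius function agrees with that of the ambient poset). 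The transversality hypothesis is then used to upgrade $m_{\mathcal{H}}(x)$ to $m_{tP,\mathcal{H}}(x)$: it guarantees that every flat meeting $\overline{P}$ also meets $P^{\circ}$, so no local region around $x$ is truncated or eliminated by intersecting with $P$.

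The main obstacle is precisely this transversality step, i.e., checking that for lattice points $x$ lying on the boundary of $tP$, no closed chamber of $\mathcal{H}$ through $x$ is lost or cut down upon intersecting with $tP$. Once the pointwise identity is in hand, swapping sums gives
\[
E_{P,\mathcal{H}}(t) = \sum_{x \in \mathbb{Z}^d \cap tP} \sum_{v \leq u(x)} |\mu(\hat 0, v)| = \sum_{v \in \mathcal{L}(\mathcal{H})} |\mu(\hat 0, v)|\,\#\{x \in \mathbb{Z}^d \cap tP : u(x) \geq v\},
\]
which by the key observation equals the right-hand side of (\ref{eq:closedmoebius}).
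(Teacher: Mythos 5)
This theorem is cited from Beck--Zaslavsky and the paper does not reprove it, so there is no in-paper proof to compare against; I will assess your argument on its own terms. Your proof of the open formula (\ref{eq:openmoebius}) is correct and complete: the relation $x\in v\Leftrightarrow u(x)\geq v$ turns $\#(\mathbb{Z}^d\cap tP\cap v)$ into the upper-sum of $g$, and M\"obius inversion at $\hat 0$ gives exactly (\ref{eq:openmoebius}). Your plan for the closed formula is also the standard one: establish the pointwise identity $m_{tP,\mathcal{H}}(x)=\sum_{v\leq u(x)}|\mu(\hat 0,v)|$ and swap sums. The localization step ($m_{\mathcal{H}}(x)$ equals the number of chambers of the central sub-arrangement $\mathcal{H}^{u(x)}$, which by Zaslavsky's theorem is $\sum_{v\leq u(x)}|\mu(\hat 0,v)|$) is fine. (One remark on conventions: for the identity $E_{P,\mathcal{H}}(t)=\sum_i E_{R_i}(t)$ to hold as stated, the dilation must act on the pair $(P,\mathcal{H})$ simultaneously, i.e.\ $m_{tP,\mathcal{H}}(x)$ should be read as $m_{P,\mathcal{H}}(x/t)$; this is also what makes transversality and the intersection lattice invariant in $t$, which you implicitly use.)

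The genuine gap is the step you yourself flag as ``the main obstacle'': you assert that transversality ``guarantees\dots no local region around $x$ is truncated or eliminated,'' but that is exactly the claim to be proved, and restating the definition of transversality does not establish it. Here is the missing argument. Fix a lattice point $x\in tP$ and a closed region $R$ of $\mathcal{H}^{u(x)}$. By transversality, the flat $u(x)$ meets $(tP)^{\circ}$; pick $z\in u(x)\cap (tP)^{\circ}$. Since $u(x)\subseteq H$ for every $H\in\mathcal{H}^{u(x)}$, we have $u(x)\subseteq\overline{R}$, so $z\in\overline{R^{\circ}}$; choosing any $w\in R^{\circ}$ and a point $y'$ on the segment $(z,w]$ close to $z$ gives $y'\in R^{\circ}\cap (tP)^{\circ}$. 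Now $x\in\overline{R}$ and $x\in\overline{tP}$, so by convexity the open segment $(x,y']$ lies in $R^{\circ}\cap (tP)^{\circ}$; its points sufficiently near $x$ avoid every $H\in\mathcal{H}\setminus\mathcal{H}^{u(x)}$ as well (those do not pass through $x$), hence lie in some open region of $(tP,\mathcal{H})$ contained in $R$ and having $x$ in its closure. Thus every chamber of $\mathcal{H}^{u(x)}$ survives the intersection with $tP$, and the local picture near $x$ shows distinct chambers yield distinct closed regions of $(tP,\mathcal{H})$ through $x$, giving $m_{tP,\mathcal{H}}(x)=m_{\mathcal{H}}(x)$. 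With this in place, the rest of your derivation of (\ref{eq:closedmoebius}) by interchange of summation is correct.
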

Since we assume $P$ to be full-dimensional, the hyperplane arrangement subdivides $P$ into closed regions $R_1$, $R_2$, $\dots$, $R_m$. Moreover, we have that
\begin{equation*}
E_{P,\mathcal{H}}(t)= \sum_{i=1}^m E_{R_i}(t) \qquad \text{and} \qquad E^{\circ}_{P^{\circ},\mathcal{H}}(t):= \sum_{i=1}^m E_{R^{\circ}_i}(t) \text{,}
\end{equation*}
where $E_{R_i}(t)$ is the classical Ehrhart quasipolynomial of the closed region $R_i$, and the interior of $R_i$ is with respect to the topology of the ambient space $\mathbb{R}^d$, see \cite[(4.2)]{BeckZaslavsky06}. We remark that $E^{\circ}_{P^{\circ},\mathcal{H}}(t)$ does not count any integer point on the facets of $P$, whereas $E^{\circ}_{P,\mathcal{H}}(t)$ also counts integer points on facets. Furthermore, there is a reciprocity result:
\begin{theorem}[{\cite[Theorem 4.1]{BeckZaslavsky06}}]
If $(P,\mathcal{H})$ is a $d$-dimensional lattice inside-out polytope, then $E_{P,\mathcal{H}}(t)$ and $E^{\circ}_{P^{\circ},\mathcal{H}}(t)$ are polynomials of degree $d$ and they also satisfy the reciprocity theorem
\begin{equation}
\label{eq:reciprocity}
E^{\circ}_{P^{\circ},\mathcal{H}}(t) = (-1)^dE_{P,\mathcal{H}}(-t)\text{.}
\end{equation}
\end{theorem}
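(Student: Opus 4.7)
The plan is to deduce this theorem from the classical Ehrhart--Macdonald reciprocity (equation~(\ref{eq:Ehrhart-Macdonald reciprocity})) combined with the decomposition of $P$ by the arrangement $\mathcal{H}$. Recall that since $\mathcal{H}$ is transverse to $P$, the hyperplanes subdivide $P$ into closed, full-dimensional rational polytopes $R_1, R_2, \dots, R_m$, and the preceding discussion in the excerpt records that
\[
E_{P,\mathcal{H}}(t) = \sum_{i=1}^m E_{R_i}(t), \qquad E^{\circ}_{P^{\circ},\mathcal{H}}(t) = \sum_{i=1}^m E_{R_i^{\circ}}(t).
\]

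The reciprocity half is then essentially immediate. I would apply Ehrhart--Macdonald reciprocity to each rational $d$-polytope $R_i$, obtaining $E_{R_i}(-t) = (-1)^d E_{R_i^{\circ}}(t)$. Summing over $i$ and using the two display formulas above yields
\[
E_{P,\mathcal{H}}(-t) = \sum_{i=1}^m E_{R_i}(-t) = (-1)^d \sum_{i=1}^m E_{R_i^{\circ}}(t) = (-1)^d E^{\circ}_{P^{\circ},\mathcal{H}}(t),
\]
which is exactly the desired identity. Note that this step only requires $(P,\mathcal{H})$ to be \emph{rational}; the lattice hypothesis plays no role here.

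For the degree and polynomiality claim, I would invoke the M\"obius formula (\ref{eq:closedmoebius}) from Theorem~\ref{thm:Moebius}:
\[
E_{P,\mathcal{H}}(t) = \sum_{u \in \mathcal{L}(\mathcal{H})} |\mu(\hat 0, u)| \cdot \#(\mathbb{Z}^d \cap tP \cap u).
\]
The $u = \hat 0 = \mathbb{R}^d$ summand is the ordinary Ehrhart polynomial of $P$, which has degree exactly $d$ with leading coefficient $\operatorname{vol}(P) > 0$ because $P$ is a full-dimensional lattice polytope. Every other summand counts lattice points in the lower-dimensional slice $tP \cap u$, whose affine span has dimension $\dim u < d$, so these terms cannot affect the leading coefficient and the asserted degree follows.

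The main obstacle is upgrading each $\#(\mathbb{Z}^d \cap tP \cap u)$ from a quasipolynomial to an honest polynomial: a rational flat intersected with the lattice polytope $P$ need not itself be a lattice polytope, so classical Ehrhart theory only guarantees quasipolynomiality a priori, and one must check that the periodic parts assemble into zero under the M\"obius-weighted sum. The standard way to push through --- the route taken in \cite[Theorem~4.1]{BeckZaslavsky06} --- is to exploit the lattice structure of $P$ to show that the fractional periods of the individual slices cancel in the weighted sum, for instance by re-expressing each slice count as a sum of lattice-polytope Ehrhart polynomials via a refinement argument. Once $E_{P,\mathcal{H}}(t)$ is established to be a polynomial in $t$ of degree $d$, the same conclusion for $E^{\circ}_{P^{\circ},\mathcal{H}}(t)$ is immediate from the reciprocity identity proven above, completing the theorem.
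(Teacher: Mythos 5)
Your reciprocity half is correct and is essentially the route taken in Beck--Zaslavsky: decompose $P$ into the closed regions $R_1,\dots,R_m$ cut out by $\mathcal{H}$, apply Ehrhart--Macdonald to each, and sum. That part matches the cited proof.

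The polynomiality half, however, goes off track because of a misreading of the hypothesis. You interpret ``lattice inside-out polytope'' as meaning only that $P$ itself is a lattice polytope, and you then (correctly, under that reading) observe that the regions $R_i$ could have non-integral vertices, so that each $E_{R_i}(t)$ or each slice count $\#(\mathbb{Z}^d\cap tP\cap u)$ is a priori only a quasipolynomial, and you speculate that the periodic parts must cancel in the M\"obius-weighted sum. They need not cancel: take $P=[0,1]^2$ and $\mathcal{H}=\{x_1=\tfrac12\}$, then $E_{P,\mathcal{H}}(t)=2(\lfloor t/2\rfloor+1)(t+1)$, which is a genuine quasipolynomial of period $2$, not a polynomial. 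So the statement with only ``$P$ lattice'' would be false, and no refinement or cancellation argument can rescue it.

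The point you are missing is the actual definition, which the paper adopts from Beck--Zaslavsky: a \emph{lattice inside-out polytope} is one whose \emph{vertex set $V(P,\mathcal{H})$} --- the union of the vertex sets of all closed regions $R_i$ --- lies in $\mathbb{Z}^d$. (This is a stronger condition than $P$ being a lattice polytope, and is exactly what excludes the example above.) With this hypothesis in hand, each $R_i$ is itself a full-dimensional lattice polytope. Then $E_{R_i}(t)$ is an honest polynomial of degree $d$ with leading coefficient $\operatorname{vol}(R_i)$ by Ehrhart's theorem, so $E_{P,\mathcal{H}}(t)=\sum_i E_{R_i}(t)$ is a polynomial of degree $d$ with leading coefficient $\sum_i\operatorname{vol}(R_i)=\operatorname{vol}(P)>0$, and likewise for $E^{\circ}_{P^{\circ},\mathcal{H}}$. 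There is no need for the M\"obius formula here at all; the same region decomposition you already used for reciprocity delivers polynomiality immediately once the hypothesis is read correctly. This is precisely how the proof in Beck--Zaslavsky's Theorem~4.1 proceeds.
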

In Section \ref{sec:EhrhartAndSymmetry}, we will intersect inside-out polytopes with hyperplanes, so we will need the following:
\begin{corollary}[{\cite[Corollary 4.3]{BeckZaslavsky06}}]
\label{cor:LowerDimReci}
Let $D$ be a discrete lattice in $\mathbb{R}^d$, let $P$ be a $D$-fractional convex polytope, i. e., all vertices of $P$ lie in $t^{-1}D$ for some $t \in \mathbb{Z}_{>0}$, and let $\mathcal{H}$ be a hyperplane arrangement in $s:= \operatorname{aff}(P)$ that does not contain the degenerate hyperplane. Then $E_{P, \mathcal{H}} (t)$ and $E^{\circ}_{P^{\circ}, \mathcal{H}} (t)$ are quasipolynomials in $t$ that satisfy the reciprocity law
\[
E^{\circ}_{P^{\circ}, \mathcal{H}} (t) = (-1)^{\operatorname{dim } s} E_{P, \mathcal{H}} (-t).
\]
\end{corollary}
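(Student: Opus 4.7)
The plan is to reduce the statement to the full-dimensional case, namely Theorem~4.1 of \cite{BeckZaslavsky06} (the reciprocity for lattice inside-out polytopes) as stated just above. The key idea is that, although $P$ sits inside $\mathbb{R}^d$, everything happens inside its affine hull $s=\operatorname{aff}(P)$, which has dimension $\operatorname{dim} s$. Working intrinsically in $s$ turns $P$ into a full-dimensional rational polytope and $\mathcal{H}$ into a rational arrangement in an $\operatorname{dim} s$-dimensional ambient space, at which point Theorem~4.1 applies and yields the sign $(-1)^{\operatorname{dim} s}$.

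More concretely, I would first locate a positive integer $t_0$ such that the vertices of $t_0 P$ lie in $D\cap s$; this is possible precisely because $P$ is $D$-fractional. The intersection of $s$ with the affine lattice generated by $D\cap s$ then spans $s$ as an affine subspace, so choosing a basepoint in $D\cap s$ and generators for the corresponding sublattice gives an affine isomorphism $\varphi\colon s \to \mathbb{R}^{\operatorname{dim} s}$ under which $\varphi(tP \cap \mathbb{Z}^d) = (t/t_0)\cdot\varphi(t_0P)\cap \mathbb{Z}^{\operatorname{dim} s}$ whenever $t$ is a multiple of $t_0$. For $t$ in the other residue classes modulo $t_0$, the corresponding intersection of the dilated polytope with the ambient lattice is governed by an analogous, translated picture. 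This produces the quasi-periodic behaviour with period dividing $t_0$.

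After this identification, $\varphi(P)$ is a full-dimensional rational polytope in $\mathbb{R}^{\operatorname{dim} s}$ and $\varphi(\mathcal{H})$ is a rational hyperplane arrangement in $\mathbb{R}^{\operatorname{dim} s}$ that does not contain the degenerate hyperplane (since by hypothesis $\mathcal{H}$ does not contain all of $s$). The counting formulas defining $E_{P,\mathcal{H}}(t)$ and $E^{\circ}_{P^{\circ},\mathcal{H}}(t)$ only see points of $\mathbb{Z}^d$ lying inside $s$, so they are preserved under $\varphi$, possibly up to the residue-class adjustments that enter each component of the quasipolynomial. Applying Theorem~4.1 (or, in the $D$-fractional but possibly non-lattice case, its standard quasipolynomial version obtained by dilating by $t_0$ and then averaging over residue classes) to $\varphi(P)$ and $\varphi(\mathcal{H})$ gives both the quasipolynomiality of $E_{\varphi(P),\varphi(\mathcal{H})}$ and $E^{\circ}_{\varphi(P)^{\circ},\varphi(\mathcal{H})}$ and the reciprocity
\[
E^{\circ}_{\varphi(P)^{\circ},\varphi(\mathcal{H})}(t) = (-1)^{\operatorname{dim} s}\, E_{\varphi(P),\varphi(\mathcal{H})}(-t).
\]
Pulling back via $\varphi^{-1}$ recovers the desired identity for $(P,\mathcal{H})$.

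The main obstacle I anticipate is the careful lattice bookkeeping at the first step: one must check that the identification $\varphi$ really converts the ambient $\mathbb{Z}^d$-point count to the intrinsic lattice-point count of a single full-dimensional quasipolynomial problem, including the correct periodicity. If $s$ meets $\mathbb{Z}^d$ only trivially but still contains $t_0^{-1}D$-points (which is what $D$-fractionality gives), the quasipolynomials in question can vanish on some residue classes of $t$, while the reciprocity statement still holds component-wise. Once this reduction is in place, the actual reciprocity is purely an application of Theorem~4.1, so essentially no new combinatorial or geometric input is required beyond what is already packaged into the full-dimensional result.
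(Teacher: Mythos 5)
Note first that the paper you are reading does not prove this corollary: it is quoted verbatim as \cite[Cor.~4.3]{BeckZaslavsky06}, so there is no proof of record here, only the reference to Beck and Zaslavsky. The natural route in the source, given what is already on the page, is the region decomposition stated right after Theorem~\ref{thm:Moebius}: $E_{P,\mathcal{H}}(t) = \sum_i E_{R_i}(t)$ and $E^{\circ}_{P^{\circ},\mathcal{H}}(t) = \sum_i E_{R_i^{\circ}}(t)$, where the $R_i$ are the closed regions of $(P,\mathcal{H})$. Because $\mathcal{H}$ excludes the degenerate hyperplane, each $R_i$ has dimension $\dim s$, and Ehrhart--Macdonald reciprocity (which is already stated for rational polytopes of any dimension) applies to each $R_i$ and contributes the uniform sign $(-1)^{\dim s}$. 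Summing over $i$ finishes the argument; no affine change of coordinates is needed.

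Your reduction-to-full-dimension idea is morally fine, but the step where you produce the bijection has a genuine gap. An affine isomorphism $\varphi\colon s\to\mathbb{R}^{\dim s}$ is tied to the \emph{fixed} affine hull $s=\operatorname{aff}(P)$, whereas the dilate $tP$ lives in $\operatorname{aff}(tP)=t\cdot s$, which moves with $t$ unless $0\in s$. Consequently the asserted identity $\varphi(tP\cap\mathbb{Z}^d)=(t/t_0)\varphi(t_0P)\cap\mathbb{Z}^{\dim s}$ is not even well-posed: for $t\neq 1$ the set $tP$ is not in the domain of $\varphi$. Fixing this requires a $t$-dependent family of affine charts (one per residue class modulo $t_0$), together with a verification that the lattices $\mathbb{Z}^d\cap(t\cdot s)$ are consistently translated copies of a fixed lattice in the direction space of $s$ — precisely the kind of bookkeeping that Ehrhart--Macdonald reciprocity for rational, non-full-dimensional polytopes already packages for you. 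There is a second, smaller gap: even after a correct reduction you would be invoking a quasipolynomial, rational version of Theorem~4.1, which is not what is stated here (the stated version is for lattice inside-out polytopes); your parenthetical ``dilating by $t_0$ and averaging'' gestures at this but is not an argument, and ``averaging'' is the wrong word — one treats each residue class as a separate polynomial constituent. In short, the idea is salvageable, but applying Ehrhart--Macdonald per closed region is both shorter and sidesteps all of the chart and period issues you ran into.
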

The next result connects inside-out polytopes to proper colorings of graphs.
\begin{theorem}[{\cite[Theorem 5.1]{BeckZaslavsky06}}]
\label{thm:ChromEhrhart}
Let $G$ be an ordinary graph on $n$ vertices and let $P=[0,1]^n$. Moreover, we define
\[
\mathcal{H}(G) : = \left\{ x_i = x_j \colon \left\{x_i, x_j\right\} \in E \right\}\text{.} 
\]
Then
\begin{equation}
E^{\circ}_{P^{\circ},\mathcal{H(}G)}(t) = (-1)^n E_{P,\mathcal{H}(G)}(-t) =  \chi_G(t-1)\text{.}
\end{equation}
\end{theorem}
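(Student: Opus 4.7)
The plan is to establish the rightmost equality $E^{\circ}_{P^{\circ},\mathcal{H}(G)}(t) = \chi_G(t-1)$ by an explicit lattice-point bijection, and then to derive the middle equality as a direct application of the inside-out reciprocity law stated in equation~(\ref{eq:reciprocity}).

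For the bijection, I would unpack the definition of the open Ehrhart quasipolynomial: $E^{\circ}_{P^{\circ},\mathcal{H}(G)}(t)$ counts the integer points in $t\bigl(P^{\circ} \setminus \bigcup \mathcal{H}(G)\bigr)$. Since $tP^{\circ} = (0,t)^n$, a lattice point $(c_1,\dots,c_n)$ lies in $\mathbb{Z}^n \cap tP^{\circ}$ if and only if $c_i \in \{1,2,\dots,t-1\}$ for every $i$. The additional condition of avoiding $\bigcup \mathcal{H}(G)$ means $c_i \neq c_j$ whenever $\{i,j\} \in E$. These two conditions together characterize precisely the proper $(t-1)$-colorings of $G$, so the counting gives $E^{\circ}_{P^{\circ},\mathcal{H}(G)}(t) = \chi_G(t-1)$.

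For the middle equality, I would first verify that $\mathcal{H}(G)$ meets $P=[0,1]^n$ transversally, so that equation~(\ref{eq:reciprocity}) applies. A nonempty flat $u \in \mathcal{L}(\mathcal{H}(G))$ is determined by an equivalence relation on $[n]$ whose classes are forced to share a common coordinate value; whenever $u$ intersects $P$, the point assigning the value $1/2$ to every class lies in $u \cap P^{\circ}$, so every flat meeting $P$ also meets $P^{\circ}$. Because the cube $[0,1]^n$ is a lattice polytope and all hyperplanes are rational, $(P,\mathcal{H}(G))$ is a lattice inside-out polytope; equation~(\ref{eq:reciprocity}) then yields $E^{\circ}_{P^{\circ},\mathcal{H}(G)}(t) = (-1)^n E_{P,\mathcal{H}(G)}(-t)$, closing the chain of equalities.

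The main obstacle is really just the transversality check: for general pairs $(P,\mathcal{H})$ this is a genuine hypothesis that can fail, but for the cube-and-graphic-arrangement setting it is essentially immediate via the $1/2$-trick. Once transversality is in hand, the proof is a dictionary translation between lattice points avoiding hyperplanes and proper colorings, together with the inside-out reciprocity already supplied by Beck and Zaslavsky.
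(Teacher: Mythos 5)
Your proof is correct: the bijection between integer points in $t\bigl(P^{\circ} \setminus \bigcup\mathcal{H}(G)\bigr)$ and proper $(t-1)$-colorings is exactly the intuition the paper records in the remark following this theorem, and the transversality check via the all-$\tfrac{1}{2}$ point (which lies on the diagonal, hence on every flat of a graphic arrangement) correctly licenses the use of inside-out reciprocity~(\ref{eq:reciprocity}). The paper cites this result from Beck and Zaslavsky without reproving it, so your argument supplies the self-contained justification the authors leave to the reference.
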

 
\begin{remark}
The intuition behind this theorem is the following: Every integer point in the interior of $tP$ corresponds to a (not necessarily proper) coloring of $G$. If $G$ contains the edge $\{x_i , x_j \}$, then any proper coloring cannot have an integer point on the hyperplane $x_i = x_j$. Therefore, every integer point in $t \left(P^{\circ}\setminus\bigcup_{H \in \mathcal{H}(G)} H\right)$ corresponds to a proper coloring of $G$ and vice versa. 
\end{remark}
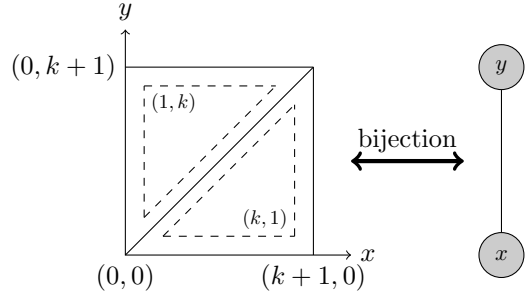
\begin{figure}[H]
\center
\begin{tikzpicture}[scale=.5]

\draw [<->] (0,6) -- (0,0) -- (6,0);
\node[above] at (0,6) {$y$};
\node[right] at (6,0) {$x$};
\draw (0,0) -- (5,5);
\draw (0,5) -- (5,5);
\draw (5,0) -- (5,5);
\draw[dashed](1,0.5) --(4.5,.5);
\draw[dashed] (4.5,.5)--(4.5,4);
\draw[dashed](1,.5)--(4.5,4);
\draw[dashed](0.5,1) --(.5,4.5);
\draw[dashed] (.5,4.5)--(4,4.5);
\draw[dashed](.5,1)--(4,4.5);

\node[below] at (0,0) {$(0,0)$};
\node[below] at (5,0) {$(k+1,0)$};
\node[left] at (0,5) {$(0,k+1)$};
\node[above left,scale=.75] at (4.5,0.5) {$(k,1)$};
\node[below right,scale=.75] at (.5,4.5) {$(1,k)$};

\draw [ultra thick,<->] (6,2.5) -- (9,2.5);
\node[above] at (7.5,2.5) {bijection};
\node[circle,draw=black,fill=white!80!black,minimum size=20,scale = .85]
(1) at (10,0) {$x$};
\node[circle,draw=black,fill=white!80!black,minimum size=20,scale = .85]
(2) at (10,5) {$y$} ;
\draw (1) -- (2);
\end{tikzpicture}
\caption{Integer points in dashed triangles correspond to proper $k$-colorings of $P_2$.}
\label{fig:Inside-Out}
\end{figure}
One immediate consequence of Theorem \ref{thm:ChromEhrhart} is that $\chi_G$ is a polynomial of degree $n$ and the leading coefficient is $1$, as the sum of the volumes of the regions of the subdivided cube equals $1$.

\section{Transfer-Matrix Methods Meet Ehrhart Theory}
\label{sec:TMMEhrhart}
\subsection{Enter Symmetry}
\label{sec:Symmetry}
In this section, we combine transfer-matrix methods with Ehrhart theory. We will use group actions and orbits to introduce a \emph{compactified transfer matrix $L$}, whose size does not depend on $k$. There are two types of symmetries that appear. Firstly, the set of proper $k$-colorings in Problem \ref{prob:ColoringsPnandCn} stays invariant under a permutation of colors. Simply renaming the colors does not change the graph $M_G$. For example, in Figure \ref{fig:TMM Graph}, the vertex $(1,2,1)$ has as many neighbors as the vertex $(3,2,3)$. Secondly, the graph $G$ itself also has a symmetry group, called the \emph{automorphism group of $G$}. We first quotient by the group coming from permuting the colors and then we quotient out by a possibly trivial subgroup of the automorphism group. 
\begin{definition}
\label{def:orbits}
Let $G$ be a simple graph on $N$ vertices and let $\mathcal{C}$ be the set of proper $k$-colorings, where $k\geq N$. Let $\mathfrak{S}_k$ be the symmetric group on $k$ elements and let $\mathcal{G}$ be a possibly trivial subgroup of the automorphism group of $G$. The group $\mathfrak{S}_k$ acts on $\mathcal{C}$ by permuting the colors and it gives rise to orbits $\tilde{o}_1, \dots, \tilde{o}_q$. The group $\mathcal{G}$ is acting on $\tilde{o}_1, \dots, \tilde{o}_q$ giving rise to orbits $o_1, \dots, o_p$. 
\end{definition}

\begin{example}
\label{ex:hiddensymmetrypreparation}
Let $G = C_5$.  We first quotient by permutations of colors. This group action induces orbits $\tilde{o}_1$, $\tilde{o}_2$, $\dots$, $\tilde{o}_{11}$ (as computer generated in no particular order):
\[
\begin{array}{c|c|c|c|c|c}
14\,\,2\,\,3\,\,5  &   14\,\,2\,\,35 & 14\,\,25\,\,3 & 1\,\,24\,\,3\,\,5 & 1\,\,24\,\,35 & 1\,\,2\,\,3\,\,4\,\,5  \\ 
\hline
 1\,\,2\,\,35\,\,4 & 1\,\,25\,\,3\,\,4 & 13\,\, 24 \,\, 5 & 13 \,\, 2 \,\, 4 \,\, 5 & 13 \,\, 25 \,\, 4 \\
\end{array}
\]
The automorphism group of $C_5$ is the dihedral group generated by $(12345)$ and $(1)(25)(34).$ The 11 partitions of the vertex set end up in 3 orbits $o_1$,$o_2$, and $o_3$ after quotienting by the dihedral group. The classes are represented by:
\[
\begin{array}{c|c|c}
1\,\,2\,\,3\,\,4\,\,5 & 1\,\,24\,\,35 & 1\,\,2\,\,4\,\,35 .\\
\end{array}
\]

\end{example}
\begin{remark}
For $k\geq N$, quotienting by $\mathfrak{S}_k$ always gives the same number of orbits. This is due to the fact that every orbit $\tilde{o}$ can be seen as a partition of the vertex set into independent sets. In particular, the number of orbits is \emph{finite}. Therefore, we also get that the number of orbits $o_1, \dots, o_p$ is the same for all $k \geq N$. In Definition \ref{def:orbits}, we only assume that $\mathcal{G}$ is a subgroup of the automorphism group, as it is in general difficult to determine the full automorphism group. 
\end{remark}
This enables us to define a matrix $L$ encoding the necessary combinatorial information, whose size is independent of the number of colors $k$. Following the arguments described in \cite{Ciucu}, we give the following definition:
\begin{definition}
\label{def:DefinitionL}
Let $G$ be a graph on $N$ nodes. Let $o_1$, $o_2$, $\dots$, $o_p$ be orbits as defined in Definition \ref{def:orbits} and let $k\geq N$ be any integer. Let $A_{M_G}$ be the transfer matrix of the graph $M_G$. We define a $p \times p$ matrix $L$ whose entries are given by
\[
 L_{i,j}= L_{o_i,o_j}= \sum_{m} a_{i,m},
\]
where the sum ranges over all elements in orbit $o_j$ and where $i$ is \emph{any} row of $A_{M_G}$ that corresponds to a representative of orbit $o_i$. $L$ is called the \emph{compactified transfer matrix of $G$.}
\end{definition}
We want to illustrate this definition for $k=4$. The general case will be treated in Example \ref{ex:CompactifiedTMM2}.
\begin{example}
\label{ex:CompactifiedTMM1}
We illustrate this procedure in the following example, where $G= P_3$. The orbits are $o_1 = \{\{1,3\},\{2\} \}$ and $o_2 = \{\{1\},\{2\},\{3\}\}$, so we can expect a $2 \times 2$ matrix $L$. In this matrix, the $(i,j)$-entry counts the number of colorings where the first $P_3$ is colored by a fixed representative of $o_i$ and the second $P_3$ is colored by any element in $o_j$.

Figure \ref{fig:bigMatrix} shows the transfer matrix for 4-colorings of $P_3$. By following Definition \ref{def:DefinitionL} for $k=4$, we can quotient out the orbits and we obtain the matrix
\[
L =\left(
\begin{array}{cc}
7 & 10 \\
5 & 11 \\
\end{array}\right),
\] 
which has the same maximal eigenvalue as the original transfer matrix in Figure \ref{fig:bigMatrix}, see Corollary \ref{cor:sameeigenvalue}. The entries of $L$ equal the row sums within each orbit/rectangle of the matrix in Figure \ref{fig:bigMatrix}.
\end{example}

\begin{remark}
\label{rem:SizeOfL}
It also makes sense to define $L$ for $1\leq k <N$. However, the size of $L$ will be smaller as not all orbits appear. For instance, the orbit, where all colors are different, cannot appear if $k<N$. As we will see in Corollary \ref{cor:ChromPolyProduct}, it makes sense to talk about $L$ even for $k<N$  provided we multiply $L$ by appropriate row weights that are $0$ if $k$ is too small for an orbit to appear.
\end{remark}
Since the entries of $L$ are nonnegative integers, the Perron-Frobenius theorem ensures that the biggest eigenvalue is real and positive. Furthermore, the following lemma implies that the biggest eigenvalue of $A_{M_G}$ and the biggest eigenvalue of $L$ agree.

\begin{lemma}[\cite{Ciucu}, Lemma 3.2]
\label{lem:Ciucu}
If $N$ is a nonnegative matrix that commutes with a group of permutation matrices $G$, then the largest eigenvalue of $N$ is the same as the largest eigenvalue of $N$ acting on the subspace of $G$-invariants.
\end{lemma}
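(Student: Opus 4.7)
The plan is to exhibit the largest eigenvalue of $N$ as an eigenvalue of its restriction to the $G$-invariant subspace, and then to observe that any eigenvalue of the restriction is already an eigenvalue of $N$, so the two largest coincide. The main tool is Perron--Frobenius combined with a symmetrization argument.

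First I would invoke Perron--Frobenius: since $N$ is a nonnegative (real) matrix, its spectral radius $\lambda_{\max}$ is a nonnegative real eigenvalue, and there exists a nonzero eigenvector $v \geq 0$ (componentwise) with $Nv = \lambda_{\max} v$. Next, because $N$ commutes with every $g \in G$, the vector $gv$ also satisfies $N(gv) = g(Nv) = \lambda_{\max}(gv)$, so the entire $G$-orbit of $v$ consists of eigenvectors for $\lambda_{\max}$.

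Now set
\[
w \; := \; \sum_{g \in G} g v .
\]
Since each $g$ is a permutation matrix, $gv$ is again a nonnegative vector with the same coordinate-sum as $v$. Therefore $w$ is a sum of nonnegative vectors, each with coordinate-sum $\sum_i v_i > 0$, so $w$ is itself nonnegative with coordinate-sum $|G|\sum_i v_i > 0$; in particular $w \neq 0$. By construction $w$ is $G$-invariant, and by linearity $Nw = \lambda_{\max} w$. Hence $\lambda_{\max}$ is an eigenvalue of the restriction of $N$ to the subspace $V^G$ of $G$-invariants (which is $N$-stable because $N$ commutes with $G$). This is the crux of the argument and also the only delicate point: one must check that the symmetrization does not accidentally cancel, which is precisely why the nonnegativity of $v$ (Perron--Frobenius) is essential.

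For the converse inequality, any eigenvalue of $N|_{V^G}$ is a fortiori an eigenvalue of $N$, hence has modulus at most $\lambda_{\max}$. Combined with the previous paragraph, the largest eigenvalue of $N|_{V^G}$ equals $\lambda_{\max}$, which is what was to be shown. The only potential obstacle is the nonvanishing of $w$; I would anticipate handling this by the coordinate-sum argument above, which goes through without any irreducibility hypothesis on $N$.
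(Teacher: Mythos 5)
Your argument is correct, and it is essentially the standard proof of Ciucu's Lemma 3.2 (which the paper cites rather than reproves): Perron--Frobenius gives a nonnegative eigenvector for $\lambda_{\max}$, and averaging over $G$ produces a nonzero $G$-invariant eigenvector, with nonvanishing guaranteed by nonnegativity. The reverse inequality is immediate since the $G$-invariant subspace is $N$-stable, so eigenvalues of the restriction are eigenvalues of $N$.
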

\begin{corollary}
\label{cor:sameeigenvalue}
With the notation from above, the biggest eigenvalue of $L$ and the biggest eigenvalue of $A_{M_G}$ agree.
\end{corollary}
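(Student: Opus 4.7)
The plan is to apply Lemma \ref{lem:Ciucu} directly to $N := A_{M_G}$ with the permutation group $G$ taken to be the image of $\mathfrak{S}_k \times \mathcal{G}$ acting on the vertex set $\mathcal{C}$ of $M_G$. Two things must be checked: first, that $A_{M_G}$ really commutes with these permutation matrices; and second, that the restriction of $A_{M_G}$ to the $(\mathfrak{S}_k \times \mathcal{G})$-invariant subspace is exactly (and not merely similar to) the matrix $L$.

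For the commutation claim, I would show that every $\pi \in \mathfrak{S}_k$ and every $\sigma \in \mathcal{G}$ act as \emph{graph automorphisms} of $M_G$. For $\pi \in \mathfrak{S}_k$ acting on a coloring $c$ by $(\pi \cdot c)(v) = \pi(c(v))$: two colorings $c_1,c_2$ are adjacent in $M_G$ iff $c_1(v) \neq c_2(v)$ for every $v \in V(G)$, and since $\pi$ is a bijection of $[k]$ this inequality is preserved. An analogous check works for $\sigma \in \mathcal{G}$ acting by $(\sigma \cdot c)(v) = c(\sigma^{-1}(v))$, using that $\sigma$ is an automorphism of $G$ and hence preserves the defining condition of $\mathcal{C}$ as proper colorings. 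Once the action is by graph automorphisms, the corresponding permutation matrices commute with the adjacency matrix $A_{M_G}$, which is precisely the hypothesis of Lemma \ref{lem:Ciucu}.

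For the identification step, I would use the natural basis of the invariant subspace given by orbit sums $e_{o_i} := \sum_{c \in o_i} e_c$ for $i = 1, \ldots, p$. Fix an orbit $o_j$ and compute $A_{M_G} \, e_{o_j}$. For any $c \in o_i$, the coefficient of $e_c$ in $A_{M_G} e_{o_j}$ equals
\[
\sum_{c' \in o_j} (A_{M_G})_{c,c'} = \#\{c' \in o_j : c' \text{ adjacent to } c \text{ in } M_G\},
\]
and by the symmetry argument that justified Definition \ref{def:DefinitionL} this count depends only on the orbit of $c$; it equals $L_{o_i, o_j}$. Summing over $c \in o_i$ therefore gives $A_{M_G} \, e_{o_j} = \sum_{i=1}^{p} L_{o_i, o_j} \, e_{o_i}$, so the matrix of $A_{M_G}|_{\text{inv}}$ in the basis $\{e_{o_j}\}$ is precisely $L$.

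Combining the two steps, Lemma \ref{lem:Ciucu} yields that the largest eigenvalue of $A_{M_G}$ equals the largest eigenvalue of $A_{M_G}|_{\text{inv}} = L$, which is the claim. The main subtlety I anticipate is the second step: one has to argue that the ``row sum per orbit'' count in Definition \ref{def:DefinitionL} is genuinely independent of the representative chosen in $o_i$ (so that the coefficient of every $e_c$ with $c \in o_i$ in $A_{M_G} e_{o_j}$ really is the \emph{same} number $L_{o_i, o_j}$). This well-definedness will later be formalized in Theorem \ref{thm:BasicGeometricFacts}, but here it follows directly from the fact that $\mathfrak{S}_k \times \mathcal{G}$ acts as automorphisms of $M_G$ and permutes the elements of each orbit transitively.
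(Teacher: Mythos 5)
Your proposal is correct and fills in exactly the details that the paper leaves implicit: the paper states this corollary immediately after quoting Ciucu's Lemma~\ref{lem:Ciucu}, and the intended argument is precisely the one you give — the color permutations $\mathfrak{S}_k$ and the graph automorphisms $\mathcal{G}$ act as automorphisms of $M_G$ and hence commute with $A_{M_G}$, and the restriction of $A_{M_G}$ to the invariant subspace, written in the orbit-sum basis $\{e_{o_i}\}$, is $L$ by Definition~\ref{def:DefinitionL}. You correctly flag the one genuine subtlety (well-definedness of $L_{o_i,o_j}$ independent of the representative of $o_i$) and dispose of it via transitivity of the group action on each orbit; note also that even a transpose ambiguity in identifying the invariant block with $L$ versus $L^T$ would be harmless here, since transposition preserves the spectrum.
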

\begin{remark}
\label{rem:CombInterpretation}
There is also a combinatorial reformulation of Definition \ref{def:DefinitionL}. $L_{i,j}$ counts the number of colorings of $G \times P_2$, where $G \times \{1\}$ is colored by a fixed representative of $o_i$ and the coloring of $G \times \{ 2\}$ is in $o_j$.
\end{remark}
In Section \ref{sec:EhrhartAndSymmetry}, we will see that the entries of $L$ are indeed polynomials in $k$. We also give a geometric interpretation of the entries.

\subsection{Ehrhart Theory and Symmetry}
\label{sec:EhrhartAndSymmetry}
As we have seen in Remark \ref{rem:CombInterpretation}, the entries of $L$ can be interpreted as counting proper $k$-colorings where one part of the graph is fixed by a coloring and another part has to lie in a given orbit. In this section, we will use inside-out polytopes to show that this counting function is indeed a polynomial. However, we first start with a small result concerning graph colorings that lie in a given orbit.

Let $G= (V,E)$ be a finite, simple graph and let $\tilde{o}$ be an orbit as defined in Definition \ref{def:orbits}, i. e., we do \emph{not} quotient out by graph automorphisms. The graph $G$ determines a hyperplane arrangement
\[
\mathcal H(G) = \{x_i = x_j \colon \{i,j\} \in E \}.
\]

 We define the $\tilde{o}$-restricted chromatic polynomial
\[
\chi_{(G, \tilde{o})} (t) = \# \text{number of proper }t\text{-colorings of }G \text{ lying in orbit } \tilde{o}.
\]

As mentioned above, every orbit can be described by a partition of the vertex set $V$ into independent sets. Vertices in the same independent set are colored by the same color. Hence, for every independent set $I\in \tilde{o} $, we get an additional hyperplane arrangement 
\[
\mathcal{H} _I:= \left\{ x_i = x_j \colon i, \, j \in I \right\}.
\] 
Moreover, we also get a hyperplane arrangement
\[
\mathcal{H}_{\tilde{o},I}:= \left\{ x_i = x_j \colon i\in I \text{ and } j\notin I \right\}.
\] 
of forbidden hyperplanes by requiring that elements in different independent sets have different colors. Lastly, we define the hyperplane arrangement
\[
\mathcal H(G, \tilde o) = \mathcal{H}(G) \cup \bigcup_{I \in \tilde{o}} \mathcal{H}_{\tilde{o},I}.
\]

With this set-up, we now have:
\begin{theorem}
\label{thm:RestrictedChromaticEhrhart}

Let $\mathcal{H} _I$, $\mathcal H(G, \tilde o)$, and $\mathcal{H}_{\tilde{o},I}$ be defined as above. Moreover, let 
\[
P_{\tilde{o}} : = [0,1]^{n}\cap \left(\bigcap_{I\in \tilde{o}} H_I \right).
\]
Then
\begin{equation}
\label{eq:RestrictedEhrhart}
(-1)^{s} E_{P_{\tilde{o}}, \mathcal{H}(G,\tilde{o})}(-t)=E^{\circ}_{\operatorname{relint}(P_{\tilde{o}}), \mathcal{H}(G,\tilde{o})}(t) = \chi_{G, \tilde{o}}(t-1),
\end{equation}
where $s$ is the number of colors used in orbit $\tilde{o}$. Furthermore, $\chi_{G, \tilde{o}}$ is a polynomial of degree $s$ with leading coefficient $1$.
\end{theorem}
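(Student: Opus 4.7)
The plan is to interpret the condition ``lies in orbit $\tilde o$'' geometrically and then apply the inside-out reciprocity framework of Beck--Zaslavsky in its lower-dimensional form (Corollary~\ref{cor:LowerDimReci}). The orbit $\tilde o$ is encoded by a partition of $V(G)$ into $s$ independent sets (``blocks''), and a map $c\colon V\to\{1,\ldots,t-1\}$ lies in $\tilde o$ precisely when (i) $c$ is constant on each block $I\in\tilde o$, and (ii) $c$ takes pairwise distinct values on distinct blocks. Condition (i) forces $c\in\bigcap_{I\in\tilde o} H_I=\operatorname{aff}(P_{\tilde o})$, while (ii) is the requirement that $c$ avoid every hyperplane in $\bigcup_I\mathcal{H}_{\tilde o,I}$. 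Because each block is an independent set, every edge of $G$ joins two distinct blocks, so $\mathcal{H}(G)\subset\bigcup_I\mathcal{H}_{\tilde o,I}$ and properness comes for free from (ii).

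First I would set up the bijection. An integer point $x\in\operatorname{relint}(tP_{\tilde o})$ avoiding $\mathcal{H}(G,\tilde o)$ has every coordinate in $\{1,\ldots,t-1\}$, is constant on blocks, and takes different values on different blocks; reading $c(i)=x_i$ yields exactly a proper $(t-1)$-coloring in orbit $\tilde o$, and every such coloring arises this way. This proves the second equality
\[
E^{\circ}_{\operatorname{relint}(P_{\tilde o}),\,\mathcal{H}(G,\tilde o)}(t)=\chi_{G,\tilde o}(t-1).
\]

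Next, for the first equality, I would verify the hypotheses of Corollary~\ref{cor:LowerDimReci}. Introducing block coordinates $y_I:=x_i$ for $i\in I$, the polytope $P_{\tilde o}$ becomes the unit cube $[0,1]^s$, and this identification sends $\mathbb{Z}^n\cap\operatorname{aff}(P_{\tilde o})$ bijectively onto $\mathbb{Z}^s$; hence $P_{\tilde o}$ is a lattice polytope of dimension $s=\dim\operatorname{aff}(P_{\tilde o})$. Under the same identification, $\mathcal{H}(G,\tilde o)$ restricts to the rational braid arrangement $\{y_I=y_J\colon I\neq J\}$ in $\mathbb{R}^s$, which does not contain the degenerate hyperplane. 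Corollary~\ref{cor:LowerDimReci} then gives $(-1)^s E_{P_{\tilde o},\mathcal{H}(G,\tilde o)}(-t)=E^{\circ}_{\operatorname{relint}(P_{\tilde o}),\mathcal{H}(G,\tilde o)}(t)$, and the lattice structure upgrades the a priori quasipolynomial to a genuine polynomial.

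Finally, I would read off the degree and leading coefficient from the region decomposition: $E^{\circ}_{\operatorname{relint}(P_{\tilde o}),\mathcal{H}(G,\tilde o)}(t)$ is the sum, over the open regions of the inside-out polytope, of the relative-interior Ehrhart polynomials of those regions, each of degree $s$ with leading coefficient equal to its $s$-dimensional volume; summing gives leading coefficient $\operatorname{vol}_s(P_{\tilde o})=1$. The main obstacle I foresee is the bookkeeping around the lower-dimensional Ehrhart setup---confirming that the induced lattice on $\operatorname{aff}(P_{\tilde o})$ is the expected one and that no hyperplane of $\mathcal{H}(G,\tilde o)$ becomes the whole affine hull upon restriction---but both follow immediately once one observes that edges never lie within a block of $\tilde o$.
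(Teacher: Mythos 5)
Your proof is correct and follows essentially the same route as the paper: identify $P_{\tilde o}$ as an $s$-dimensional lattice inside-out polytope of volume $1$, set up the bijection between lattice points in the open dilates and proper colorings lying in orbit $\tilde o$, and invoke the lower-dimensional Beck--Zaslavsky reciprocity (Corollary~\ref{cor:LowerDimReci}). You spell out several checks the paper leaves implicit, such as the block-coordinate change of variables sending $P_{\tilde o}$ to $[0,1]^s$ and the observation that $\mathcal{H}(G)\subseteq\bigcup_{I}\mathcal{H}_{\tilde o,I}$ since each block is an independent set, but the underlying argument is the same.
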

\begin{proof}
We first note that $P_{\tilde{o}}$ is an $s$-dimensional lattice inside-out polytope with volume $1$ (seen as an $s$-dimensional polytope). The hyperplane arrangement $\mathcal{H}(G, \tilde{o})$ subdivides $P_{\tilde{o}}$ into regions $R_1$,  $R_2,$  $\dots$, $R_m$, where $\operatorname{dim } R_i = \operatorname{dim }P_{\tilde{o}}$. We know that $E^{\circ}_{\operatorname{relint}(P_{\tilde{o}}), \mathcal{H}(G,\tilde{o})}(t)$ counts the integer points in the relative interior of the $t^{\text{th}}$-dilate of these regions. The integer points in these regions correspond to proper $(t-1)$-colorings of $G$ that lie in $\tilde{o}$, and every proper $(t-1)$-coloring in $\tilde{o}$ corresponds to an integer point in a $t \operatorname{relint} R_i$ for some $i$, which proves (\ref{eq:RestrictedEhrhart}). The claim about the degree of $\chi_{G,\tilde{o}}$ follows from the dimension and volume of $P_{\tilde{o}}$.
\end{proof}
\begin{remark}
The same result could also be obtained by forming a quotient graph where all vertices in the independent sets of $\tilde{o}$ get identified. Now counting proper $k$-colorings of this quotient graph is the same as counting proper $k$-colorings that lie in $\tilde{o}$. This however does not directly work if the also quotient out by graph automorphisms.
\end{remark}
\begin{remark}
The statement and the proof still hold for an orbit $o$ if we additionally quotient out by graph automorphisms, except that the leading coefficient will be the number of orbits $\tilde{o}$ that are in the preimage of orbit $o$.
\end{remark}
We now want to apply this geometric setting to the transfer-matrix theory to interpret the entries of $L$ in terms of Ehrhart polynomials. Recall that we want to color $\Gamma :=G\times P_2$, where $G$ is a graph on $N$ nodes. In the classical setting, we have a subdivision of the $2N$-dimensional unit cube stemming from the edges of the graph $\Gamma$. However, we want to further refine this subdivision according to the orbit structure. As we will see, this subdivision nicely resembles the symmetry of the orbits.
 
Let $G$ be a graph with vertices $\{v_1,v_2,\dots,v_N \}$, let $o_i, o_j \in O_G$ be defined as in Definition \ref{def:orbits}, and let $V(P_2) = \{1,2\}$. Pick a representative $c$ of $o_i$ such that $c(v_i)\leq N$ and color the first $N$ vertices accordingly. This defines a $(G \times \{1\})$-restricted coloring of $G\times P_2$, which we will call an $o_i$-restricted coloring. Recall that $L_{o_i,o_j}$ counts the number of $o_i$-restricted colorings such that the coloring of $G \times \{2\}$ is an element of $o_j$. 

Since the entries will correspond to restricted colorings, we first state a general result about restricted colorings. It is quite possible that this result is already known and it has a similar flavor as \cite[Thm. 4.1]{BeckZaslavsky06}.
\begin{theorem}
\label{thm:restrictedReciprocity}
Let $\Gamma = (\{1,2,\dots,n \},E)$ be a graph and fix a proper $k'$-coloring $c' \colon V' \rightarrow \{1,2,\dots,k' \}$ on the induced subgraph $\left. G\right|_{V'}$ for a subset $V' \subset V(\Gamma)$. Then, for $k \geq k'$, the restricted chromatic polynomial 
\begin{equation}
\label{eq:RestrictedChromaticPolynomial}
\chi_{c', \Gamma}(k)= \# \text{proper $k$-colorings c of }\Gamma \text{ such that }\left.c \right|_{V'} = c'.
\end{equation} is a polynomial of degree $\#V - \#V'=:s$ with leading coefficient $1$, whose coefficients $a_i$ alternate in sign, and whose absolute values of the coefficient form a \emph{log-concave sequence}, i.e., $a_i^2 \geq a_{i-1}a_{i+1}$ holds for $0 < i<s$. The second highest coefficient $a_{n-1}$ is given by
\[
-a_{n-1} =\# \text{edges }\{ v_i, v_j\} \text{ such that } \{v_i, v_j \}\nsubseteq V'.
\]
Moreover, we have the reciprocity statement
\begin{align*}
\label{eq:RestrictedReciprocity}
\chi_{c', \Gamma}(-k) &= (-1)^{s} \# (\alpha, c)\text{ of }\Gamma \text{ with } \left.c \right|_{V'}=c'\\
					 &= (-1)^s \chi_{c',\Gamma}(k),\\
\end{align*}
where $(\alpha,c)$ is a pair of an acyclic orientation $\alpha$ and a compatible (\emph{not necessarily proper}) $k$-coloring, $c$, of $\Gamma$. 
\end{theorem}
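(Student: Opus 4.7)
My plan is to realise $\chi_{c',\Gamma}$ as an open inside-out Ehrhart function and then lift every claim of the theorem from the Beck--Zaslavsky framework recalled in Section~\ref{sec:InsideOut}, following the template of Theorem~\ref{thm:RestrictedChromaticEhrhart}. Working in $\mathbb{R}^{V\setminus V'}\cong\mathbb{R}^s$ with coordinates $y_a$ indexed by the unconstrained vertices, take $P = [0,1]^s$ and form the rational hyperplane arrangement
\begin{equation*}
\mathcal{H}_{c'} = \{y_a = y_b : \{a,b\}\subseteq V\setminus V',\ \{a,b\}\in E\}\,\cup\,\{y_a = c'(v) : a\in V\setminus V',\ v\in V',\ \{a,v\}\in E\}.
\end{equation*}
Exactly as in the proof of Theorem~\ref{thm:ChromEhrhart}, lattice points in the relative interior of $(k+1)P$ that avoid $\bigcup\mathcal{H}_{c'}$ are in bijection with the proper $k$-colourings of $\Gamma$ extending $c'$ (via $y_a\mapsto c(v_a)$), so $\chi_{c',\Gamma}(k) = E^{\circ}_{P^{\circ},\mathcal{H}_{c'}}(k+1)$.

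Polynomiality, degree $s$, and leading coefficient $\operatorname{vol}(P)=1$ are then immediate from Theorem~4.1 of \cite{BeckZaslavsky06}, after the small technical rescaling of the polytope (needed so that the affine hyperplanes $y_a=c'(v)$ genuinely cut $P$) that the hypothesis $k\geq k'$ guarantees. The second-highest coefficient is read from the Möbius expansion of Theorem~\ref{thm:Moebius}: only flats of codimension at most one contribute to $k^{s-1}$, each hyperplane of $\mathcal{H}_{c'}$ contributes $-1$, and the hyperplanes of $\mathcal{H}_{c'}$ are in bijection with the edges of $\Gamma$ not entirely contained in $V'$. Alternating signs follow from the sign pattern $\operatorname{sgn}\mu(\hat 0,u) = (-1)^{\operatorname{codim} u}$ on the intersection semilattice, and log-concavity is inherited from Huh's theorem \cite[Thm.~3]{Huh}: $\mathcal{H}_{c'}$ is the restriction of the graphic arrangement of $\Gamma$ to the rational affine subspace cut out by the $c'$-coordinates, so its underlying matroid is a minor of a graphic matroid and is therefore representable.

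The reciprocity demands more than the numerical identity $(-1)^s\chi_{c',\Gamma}(-k) = E_{P,\mathcal{H}_{c'}}(k+1)$ coming out of Theorem~4.1 of \cite{BeckZaslavsky06}: it requires a combinatorial identification of the closed count with pairs $(\alpha,c)$. I would adapt the bijection in the proof of Corollary~5.5 of \cite{BeckZaslavsky06}. Every closed region $R$ of $(P,\mathcal{H}_{c'})$ selects, for each hyperplane in $\mathcal{H}_{c'}$, a specific side; reading $y_a<y_b$ as the orientation $a\to b$ and $y_a<c'(v)$ as $a\to v$ produces an acyclic orientation $\alpha$ of $\Gamma$, and any lattice point in $R$ provides a compatible (not necessarily proper) extension $c$ of $c'$. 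The multiplicity with which a lattice point is counted equals the number of closed regions through it, which matches exactly the number of acyclic orientations compatible with that fixed colouring.

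The step I expect to be the main obstacle is verifying condition~(2) in the definition of $c'$-restricted compatibility through this bijection. If $v_1,v_2\in V'$ share the colour $c'(v_1)=c'(v_2)$, then the two \emph{a priori} distinct hyperplanes $y_a=c'(v_1)$ and $y_a=c'(v_2)$ collapse to a single hyperplane of $\mathcal{H}_{c'}$, so each closed region prescribes only one above/below choice for the pair of edges $\{a,v_1\},\{a,v_2\}$. This is precisely the consistency demanded by condition~(2), but one has to check carefully that the region-counting multiplicity on the Ehrhart side matches the orientation count on the combinatorial side with no over- or under-counting hidden in this collapse. Once this case analysis is complete, the full reciprocity and the corresponding count of restricted acyclic orientations (specialising to the analogue of Proposition~\ref{prop:ClassicChromaticReciprocity}) drop out of the inside-out framework.
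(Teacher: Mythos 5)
Your proposal mirrors the paper's own proof: both realize $\chi_{c',\Gamma}$ as an open inside-out Ehrhart count for the arrangement obtained by intersecting the graphic arrangement of $\Gamma$ with the affine slice prescribed by $c'$, both read off the degree, leading coefficient, and second coefficient from the M\"obius expansion over the intersection semilattice, both import sign-alternation and log-concavity from Huh, and both obtain reciprocity and the acyclic-orientation interpretation from Beck--Zaslavsky (Theorem~4.1 and the argument of Corollary~5.5 there). The only real difference is cosmetic --- you build the arrangement directly in $\mathbb{R}^s$, while the paper starts in $\mathbb{R}^n$ and passes to $\mathbb{R}^s$ by an affine unimodular map --- so the two proofs are the same argument in different coordinates.

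The worry you raise at the end is a genuine one, and you should push it further than you do. When $c'$ is not injective and some $a \notin V'$ is adjacent to $v_1, v_2 \in V'$ with $c'(v_1)=c'(v_2)$, the two edges $\{a,v_1\}$ and $\{a,v_2\}$ give the \emph{same} hyperplane $y_a = c'(v_1)$ in $\mathcal{H}_{c'}$. You note correctly that this collapse is exactly what makes condition~(2) in the definition of restricted compatibility the right notion; but the same collapse also breaks the step where you assert that the hyperplanes of $\mathcal{H}_{c'}$ are in bijection with the edges not contained in $V'$. The M\"obius expansion contributes $-1$ per \emph{distinct} hyperplane, so $-a_{s-1}$ is the number of distinct hyperplanes, which can be strictly smaller than the edge count. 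Concretely, with $V=\{a,v_1,v_2\}$, $V'=\{v_1,v_2\}$, $E=\{\{a,v_1\},\{a,v_2\}\}$ and $c'\equiv 1$, one has $\chi_{c',\Gamma}(k)=k-1$, so $-a_0 = 1$ while there are two qualifying edges. This is an inaccuracy already present in the paper's statement and proof --- not something your argument introduced --- but a clean write-up should either assume no two edges into $V'$ collapse (for instance, that $c'$ is injective) or phrase the second-coefficient claim in terms of distinct induced hyperplanes.
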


\begin{proof}
Let $\mathcal{H} = \mathcal{H}(\Gamma)$ be the hyperplane arrangement coming from the edges of the graph $\Gamma$. Let $P = [0,1]^n$. We intersect $(P,\mathcal{H})$ with the hyperplanes coming from the coloring $c'$, i.e., we intersect $(P,\mathcal{H})$ with the hyperplanes $x_i = c'(v_i)=:c'_i$ for all $v_i \in V'$. This induces a new inside-out polytope $(\overline{P},\overline{\mathcal{H}})$ of dimension $s$. This is illustrated in Figure \ref{fig:InducedInsideOutPolytope}. 
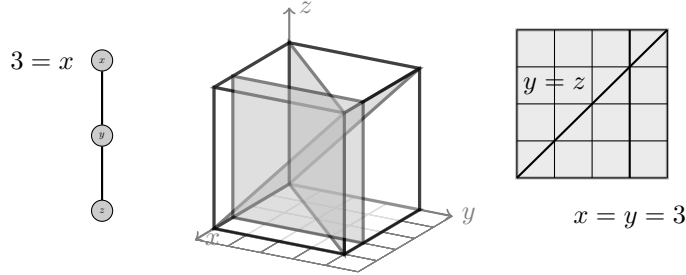
\begin{figure}[h]
\centering
\begin{subfigure}{.3\textwidth}
\centering
\begin{tikzpicture}[scale =.5]
\node[circle,draw=black,fill=white!80!black,minimum size=20, scale=.4] (1) at (-1,4) { $x$};
\node[circle,draw=black,fill=white!80!black,minimum size=20, scale=.4] (2) at (-1,2) { $y$};
\node[circle,draw=black,fill=white!80!black,minimum size=20, scale=.4] (3) at (-1,0) { $z$};
\draw[thick] (1)--(2)--(3);
\node[left] at (-1.5,4) {$\color{black}3=x\color{black}$};

\end{tikzpicture}
\end{subfigure}%
\begin{subfigure}{.3\textwidth}
\centering
\tdplotsetmaincoords{70}{120}
\begin{tikzpicture}
	[tdplot_main_coords,
		grid/.style={very thin,gray},
		axis/.style={->,gray,thick},
		cube/.style={very thick,fill=white,opacity=.5},
		square/.style={opacity=.5,very thick,fill=lightgray},scale=.5]

	\foreach \x in {-0,1,...,5}
		\foreach \y in {-0,1,...,5}
		{
			\draw[grid] (\x,0) -- (\x,5);
			\draw[grid] (-0,\y) -- (5,\y);
		}			

	\draw[axis] (0,0,0) -- (5,0,0) node[anchor=west]{$x$};
	\draw[axis] (0,0,0) -- (0,5,0) node[anchor=west]{$y$};
	\draw[axis] (0,0,0) -- (0,0,5) node[anchor=west]{$z$};

	\draw[cube] (0,0,0) -- (0,4,0) -- (4,4,0) -- (4,0,0) -- cycle;
	
	\draw[cube] (0,0,0) -- (0,4,0) -- (0,4,4) -- (0,0,4) -- cycle;

	\draw[cube] (0,0,0) -- (4,0,0) -- (4,0,4) -- (0,0,4) -- cycle;

	\draw[cube] (4,0,0) -- (4,4,0) -- (4,4,4) -- (4,0,4) -- cycle;

	\draw[cube] (0,4,0) -- (4,4,0) -- (4,4,4) -- (0,4,4) -- cycle;

	\draw[cube] (0,0,4) -- (0,4,4) -- (4,4,4) -- (4,0,4) -- cycle;

	\draw[square] (0,0,0) -- (0,4,4) -- (4,4,4) -- (4,0,0) -- cycle;
	\draw[square] (0,0,0) -- (4,4,0) -- (4,4,4) -- (0,0,4) -- cycle;
	\draw[square] (3,0,0) -- (3,4,0) -- (3,4,4) -- (3,0,4) -- cycle;
	
\end{tikzpicture}
\end{subfigure}%
\begin{subfigure}{.3\textwidth}
\centering
\tdplotsetmaincoords{80}{90}
\begin{tikzpicture}
[tdplot_main_coords,
		grid/.style={very thin,gray},
		axis/.style={->,blue,thick},
		cube/.style={opacity=.3,very thick,fill=lightgray},scale = .5]
			
\draw[cube] (0,28,-3) -- (0,32,-3) -- (0,32,1) -- (0,28,1) -- cycle;
\draw[thick] (0,28,-3) -- (0,32,1);
\draw[thick] (0,31,-3) -- (0,31,1);
\draw[] (0,28,-3) -- (0,28,1); 
\draw[] (0,29,-3) -- (0,29,1);
\draw[] (0,30,-3) -- (0,30,1);
\draw[] (0,32,-3) -- (0,32,1);

\draw[] (0,28,-3) -- (0,32,-3);
\draw[] (0,28,-2) -- (0,32,-2);
\draw[] (0,28,-1) -- (0,32,-1);
\draw[] (0,28,0) -- (0,32,0);
\draw[] (0,28,1) -- (0,32,1);

\node[below] at (0,31,-3.5) {$x=y=3$};
\node[above] at (0,29,-1) {$y=z$};
\end{tikzpicture}
\end{subfigure}
\caption{$P_3$ with vertex $x$ colored by $3$, the corresponding inside-out polytope $(P,\mathcal{H})$, and the \emph{induced} inside-out polytope $(\overline{P},\overline{\mathcal{H}})$.}
\label{fig:InducedInsideOutPolytope}
\end{figure}
Using an affine, unimodular map we can assume that $(\overline{P},\overline{\mathcal{H}}) \subset \mathbb{R}^{s}$ is full-dimensional. The integer points in $[1,k]^s$ that are not in $\overline{\mathcal{H}}$ are counted by $\chi_{c', \Gamma}(k)$, where we assume that $k\geq \max_i c_i'$.

Thus, by (\ref{eq:openmoebius}) we have that 
\[
\chi_{c', \Gamma}(k) = \sum_{u \in \mathcal{L}(\overline{\mathcal{H}})} \mu(\hat 0 ,u) \# \left(\mathbb{Z}^{s} \cap u \cap k \overline{P} \right)= \sum_{u \in \mathcal{L}(\overline{\mathcal{H}})} \mu(\hat 0 ,u)  k^{\operatorname{dim} u}.
\]
Note that this is actually the \emph{characteristic polynomial}, denoted $p_{\overline{\mathcal{H}}}$, of the induced hyperplane arrangement $\overline{\mathcal{H}}$, since $ \# \left(\mathbb{Z}^{s} \cap u \cap k \overline{P} \right) = k^{\dim u}$. The claim about the log-concavity now follows by a result of June Huh, see \cite[Cor. 27]{Huh}. The statement about the second highest coefficient follows, since the only terms of dimension $s-1$ come from flats of dimension $s-1$, which are exactly the hyperplanes coming from edges $\{ v_i, v_j\}$ such that  $\{v_i, v_j \}\nsubseteq V'$. We remark that this polynomial is alternating in sign, again by \cite[Thm 3.1]{Huh}. , 
\[
\chi_{c', \Gamma}(-k) =  (-1)^{s} \sum_{u \in \mathcal{L}(\overline{\mathcal{H}})} \left| \mu(\hat 0 ,u)\right| k^{\dim u}.
\]
This --- using again (\ref{eq:closedmoebius}) --- is equivalent to
\[
(-1)^{s} \chi_{c', \Gamma}(-k) = E_{\overline{P},\overline{\mathcal{H}}} (k-1)= (-1)^s \chi_{c', \Gamma}(k) = \sum_{x \in \mathbb{Z}^s } m_{((k-1) \overline{P},\overline{\mathcal{H}})} (x).
\]
Similar to \cite[Proof of Cor. 5.5]{BeckZaslavsky06}, one can now observe that the right-hand side counts the number of compatible pairs $(\alpha, c)$, where $c$ is a ---not necessarily proper--- $c'$-restricted $k$-coloring and $\alpha$ is an acyclic orientation. Here we implicitly used that $k\geq \max_{i} c_i'$ while applying \cite[Thm 3.1]{BeckZaslavsky06}.
\end{proof}

The following theorem states some basic facts about $L_{o_i,o_j}$:
\begin{theorem}
\label{thm:BasicGeometricFacts}
With the notation from above and with $k\geq N$, we have:
\begin{enumerate}
\item Every entry $L_{o_i, o_j}(k)$ equals the sum of Ehrhart polynomials of lattice inside-out polytopes of dimension $\# \text{colors of }o_j$ and hence is a polynomial of degree $\# \text{colors of }o_j$,
\item $L_{o_i,o_j}(k)$ is independent of the choice of the representative, i.e., it is well-defined,
\item $\# o_i \cdot L_{o_i, o_j}(k) = \chi_{G_{o_i,o_j}}(k)$, where
\begin{align*}
\chi_{G_{o_i,o_j}}(k) = &\# \text{proper } k\text{-colorings}\colon \text{coloring of }G \times \{1\} \text{ is in }o_i \text{ and } \\ &\text{ coloring of }G \times \{2\} \text{ is in }o_j.\\
\end{align*}
\end{enumerate}
\end{theorem}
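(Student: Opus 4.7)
The plan is to reduce all three claims to the inside-out polytope framework developed in Theorems~\ref{thm:RestrictedChromaticEhrhart} and~\ref{thm:restrictedReciprocity}. Throughout I fix a representative $c_i \in o_i$ coloring $G \times \{1\}$; with this choice, $L_{o_i, o_j}(k)$ counts the proper $k$-colorings of $G \times \{2\}$ that (a) are proper on $G$, (b) differ pointwise from $c_i$ (the constraint forced by the $P_2$-edges of $G \times P_2$), and (c) realize a partition of $V(G \times \{2\})$ lying in the orbit $o_j$.

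For (1), I will decompose $o_j$ into its $\mathcal{G}$-preimage orbits $\tilde o_{j,1}, \dots, \tilde o_{j,t}$ and treat each summand separately. For a fixed $\tilde o_{j,\ell}$, the polytope
\[
P_{\tilde o_{j,\ell}} = [0,1]^N \cap \bigcap_{I \in \tilde o_{j,\ell}} H_I
\]
from Theorem~\ref{thm:RestrictedChromaticEhrhart} is a lattice polytope whose dimension equals the number of parts of $\tilde o_{j,\ell}$, which in turn equals the number of colors used in $o_j$. I equip it with the hyperplane arrangement $\mathcal{H}(G, \tilde o_{j,\ell})$ augmented by the forbidden hyperplanes $\{x_v = c_i(v) : v \in V(G)\}$ dictated by the $P_2$-edges, in the spirit of Theorem~\ref{thm:restrictedReciprocity}. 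By exactly the argument used in Theorem~\ref{thm:RestrictedChromaticEhrhart}, the interior lattice points of the $(k+1)$-st dilate are in bijection with the colorings of $G \times \{2\}$ realizing $\tilde o_{j,\ell}$ and compatible with $c_i$, so this count is the Ehrhart polynomial of a lattice inside-out polytope of the claimed dimension. Summing over $\ell$ yields (1).

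For (2), any two representatives $c_i, c_i'$ of $o_i$ are related by $c_i' = \sigma \circ c_i \circ \tau^{-1}$ for some $\sigma \in \mathfrak{S}_k$ and $\tau \in \mathcal{G}$, since $o_i$ is by construction a single $\mathcal{G}$-orbit of $\mathfrak{S}_k$-orbits. The map $c_2 \mapsto \sigma \circ c_2 \circ \tau^{-1}$ on colorings of $G \times \{2\}$ is then a bijection that preserves propriety on $G$ (because $\tau$ is a graph automorphism), pointwise distinctness from the first-layer coloring (because $\sigma$ is a bijection on colors), and membership in $o_j$ (because $o_j$ is invariant under both $\mathfrak{S}_k$ and $\mathcal{G}$). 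Hence both representatives produce the same count, proving (2).

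Part (3) follows immediately from (2): the quantity $\chi_{G_{o_i, o_j}}(k)$ splits according to the coloring of $G \times \{1\}$, and each of the $\#o_i$ first-layer colorings contributes the same count $L_{o_i, o_j}(k)$. The main obstacle I expect is part (1): one must carefully check that folding the $c_i$-forbidden hyperplanes into the arrangement (rather than into the polytope itself) still yields a lattice inside-out polytope inside the affine hull of $P_{\tilde o_{j,\ell}}$ to which Corollary~\ref{cor:LowerDimReci} applies, producing an honest Ehrhart polynomial of the stated degree.
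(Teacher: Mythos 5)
Your proposal is correct and follows essentially the same route as the paper: both reduce part (1) to the restricted chromatic polynomial machinery of Theorem~\ref{thm:restrictedReciprocity} via inside-out polytopes, and both prove (2) and (3) by the same equivariance/averaging argument. The one place you diverge is in the setup for (1): you build the inside-out polytope directly in the $N$-dimensional coordinate space of the second layer (the polytope $P_{\tilde o_{j,\ell}}$ of Theorem~\ref{thm:RestrictedChromaticEhrhart}, augmented with forbidden hyperplanes $x_v = c_i(v)$), whereas the paper starts in $\mathbb{R}^{2N}$, identifies the resulting arrangement with that of a quotient graph of $G \times P_2$, and then applies Theorem~\ref{thm:restrictedReciprocity} to that quotient graph --- both constructions land in the same $(\#\text{colors of }o_j)$-dimensional slice, so this is a presentational rather than a mathematical difference. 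The subtlety you flag at the end --- that the affine hyperplanes $x_v = c_i(v)$ do not scale with the dilate and so must be handled carefully to get an honest polynomial --- is well spotted and is precisely what the proof of Theorem~\ref{thm:restrictedReciprocity} addresses through the M\"obius-function formula (\ref{eq:openmoebius}), by observing that for $k$ large enough (hence the hypothesis $k \geq N$) each flat $u$ contributes $k^{\dim u}$ lattice points; so the concern is resolved by the same lemma you intended to use. Your argument for (2) is in fact a bit more explicit than the paper's, which only spells out the $\mathfrak{S}_k$-invariance and leaves the $\mathcal{G}$-part to the aggregation of columns, while you exhibit the bijection $c_2 \mapsto \sigma \circ c_2 \circ \tau^{-1}$ and verify all three invariance properties directly.
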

\begin{proof}

We first prove the statement for the case, where we only quotient out by permutations of colors. Then we show that this implies the statement for orbits $o$ when we also quotient out by a subgroup of the automorphism group.

Let $G$ be a graph with vertices $\{v_1,v_2,\dots,v_N \}$, let $P_2$ be the path graph on 2 vertices, and let
\[
\mathcal{H}' = \{x_i = x_j \colon \, \{i,j \}\in E(G\times P_2) \}.
\]
Moreover, let $C = [0,1]^{2N}$. Let $\tilde{o}_i, \tilde{o}_j$ be given orbits of $G$ after quotienting out by permutations of colors. Let $\mathcal{I}$ be the set of independent sets of $\tilde{o}_j$. As above, for every $I \in \mathcal{I}$, we get
\[
\mathcal{H} _I:= \left\{ x_i = x_j \colon i, \, j \in I \right\},
\] 
and we get a set of forbidden hyperplanes 
\[
\mathcal{H}_{\tilde{o}_j,I}:= \left\{ x_i = x_j \colon i\in I \text{ and } j\notin I \right\}.
\] 
Now let $P = C \cap \left( \bigcup_I  \mathcal{H} _I\right)$ and let 
\[
\mathcal{H} = \mathcal{H}'  \cup \bigcup_I  \mathcal{H}_{\tilde{o}_j,I}:= \left\{ x_i = x_j \colon i\in I \text{ and } j\notin I \right\}.
\]

Then  $(P, \mathcal{H})$ is a lattice inside-out polytope and its dimension is 
\[
N + \text{number of colors in }\tilde{o}_j = N + \# \mathcal{I}.
\] 

We remark that this is the same hyperplane arrangement that one would obtain from a quotient graph of $G \times P_2$. This quotient graph can be obtained in the following way:
\begin{enumerate}
\item All vertices of $G \times \{ 2\}$ that are in the same independent set of $\tilde{o}_j$ get identified,
\item and we turn all vertices in the image of $G \times \{ 2\}$ into a clique.
\end{enumerate}   
Therefore, one can  now apply Theorem \ref{thm:restrictedReciprocity} to see that the entries of $L_{\tilde{o}_i,\tilde{o}_j}$ are polynomials.

If we now furthermore quotient by graph symmetries, we by definition fix a row and add all entries that are in columns indexed by orbits that get mapped to $o_j$. Thus, the the entries of $(L_{o_i,o_j})$ are also polynomials whose leading coefficient is the number of orbits in the preimage of $o_j$.

Now let $c$ and $c'$ be colorings in the same orbit $\tilde{o}_i$. Then there is a bijection of the colors mapping $c$ to $c'$. This permutation gives rise to a bijection of the lattice points in $t\operatorname{relint}(R_i)$. Since $\chi_{G_{\tilde{o}_i,\tilde{o}_j}} (t)$ counts the number of proper $t$-colorings such that $G \times \{ 1\}$ is an element of orbit $\tilde{o}_i$ and $G \times \{ 2\}$ is an element of orbit $\tilde{o}_j$ and we therefore get
\[
\# \tilde{o}_i \cdot L_{\tilde{o}_i, \tilde{o}_j}(t) = \chi_{G_{\tilde{o}_i, \tilde{o}_j}}(t).
\]

The last statement follows, since we moded out by a graph symmetry.
\end{proof}

Now that we have seen that the entries of $L$ can be interpreted as Ehrhart polynomials of inside-out polytopes, we give an explicit example.
\begin{example}[Example \ref{ex:CompactifiedTMM1} continued]
\label{ex:CompactifiedTMM2}
Again let $G= P_3$. Recall that the orbits are $o_1 = \{\{1,3\},\{2\} \}$ and $o_2 = \{\{1\},\{2\},\{3\}\}$, so we can expect a $2 \times 2$ matrix $L$. 

We do the same calculation as in Example \ref{ex:CompactifiedTMM1}, but for $k$ colors. For the matrix $L$ the entries will be polynomial for $k\geq 3,$ but for lower $k$ the polynomials might not make sense, as for small $k$ there are not enough colors for every orbit. On the other hand, the polynomials are of degree at most three. By explicit computer calculations for $k=3,4,5,6$ we infer that the matrix is
\[
\left(
\begin{array}{cc}
k^2-3k+3  &  k^3-6k^2+13k-10 \\
k^2-4k+5  &  k^3-6k^2+14k-13 \\
\end{array}
\right)
\]
for $k\geq 3$. In comparison, the adjacency matrix of $M_G$ of proper $k$-colorings of $P_3$ has dimension $k(k-1)^2 \times k(k-1)^2$.
\end{example}
In Example \ref{ex:CompactifiedTMM2}, the graph $P_3$ was so small that there was no graph automorphism identifying two orbits $\tilde{o}$ and $\tilde{o}'$. We now illustrate how the matrices are further reduced in size when the automorphisms of the underlying graphs are also considered. We first consider $k$-colorings of $G \times P_n$ with $G=C_5.$ It should be noted that this question can be addressed with ad hoc methods adapted to this particular choice of $G,$ but our method is completely general. In our method, we do not assume anything about $G.$ 

\begin{example}[Example \ref{ex:hiddensymmetrypreparation} continued]
\label{ex:hiddensymmetry}
Let $G=C_5$ be the graph for which we want to calculate the chromatic polynomial of $G \times P_n.$ We label the edges of $C_5$ by $12, 23, 34, 45$ and $51.$ We first quotient by permutations of colors. The 11 partitions of the vertices into independent sets are (as computer generated in no particular order):
\[
\begin{array}{c|c|c|c|c|c}
14\,\,2\,\,3\,\,5  &   14\,\,2\,\,35 & 14\,\,25\,\,3 & 1\,\,24\,\,3\,\,5 & 1\,\,24\,\,35 & 1\,\,2\,\,3\,\,4\,\,5  \\ 
\hline
 1\,\,2\,\,35\,\,4 & 1\,\,25\,\,3\,\,4 & 13\,\, 24 \,\, 5 & 13 \,\, 2 \,\, 4 \,\, 5 & 13 \,\, 25 \,\, 4 \\
\end{array}
\]
Since every partition corresponds to an orbit $\tilde{o}$ as defined in Definition \ref{def:orbits}, we expect an $11 \times 11$-matrix:
\begin{figure}[H]
\includegraphics[width= \textwidth]{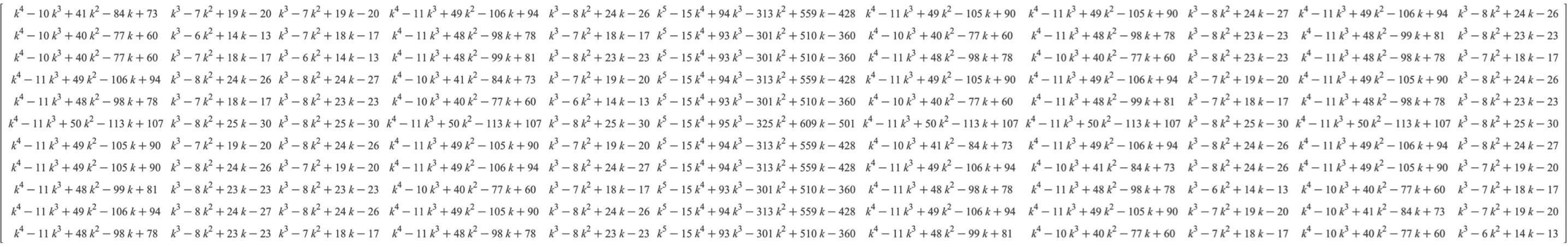}
\caption{$L$ matrix where we only quotient out by permutations of colors.}
\label{fig:eleven}
\end{figure}
The automorphism group of $C_5$ is the dihedral group generated by $(12345)$ and $(1)(25)(34).$ The 11 partitions of the vertex set end up in 3 orbits after quotienting by the dihedral group. The classes are represented by:
\[
\begin{array}{c|c|c}
1\,\,2\,\,3\,\,4\,\,5 & 1\,\,24\,\,35 & 1\,\,2\,\,4\,\,35 \\
\end{array}
\]
Adding up entries from columns --- indexed by orbits $\tilde{o}_i$ that get mapped to the same orbit $o$ --- of the $11 \times 11$-matrix gives an even more compactified version, the $3 \times 3$ matrix $L,$
\begin{tiny}
\[
\left(
\begin{array}{ccc}
k^5-15k^4+95k^3-325k^2+609k-501 &  5k^3-40k^2+125k-150 & 5k^4-55k^3+250k^2-565k+535 \\
k^5-15k^4+93k^3-301k^2+510k-360 &  5k^3-36k^2+96k-93     &  5k^4-53k^3+224k^2-449k+357 \\ 
k^5-15k^4+94k^3-313k^2+559k-428 & 5k^3-38k^2+110k-119   & 5k^4-54k^3+237k^2-506k+441 \\
\end{array}\right).
\]
\end{tiny}
\end{example}

\subsection{Main Results}
\label{sec:Main Results}
In this section, we show that $L$ defined as in Definition \ref{def:DefinitionL} behaves like a transfer matrix. We then deduce that the chromatic polynomial of $G \times P_n$ can be determined by computing powers of $L$. Moreover, we will use geometry to find an Ehrhart-theoretic interpretation for the entries of $L^n$.  This geometric interpretation allows us to deduce a reciprocity statement for the rows sums of $L^n$. We end this section by stating results about the biggest eigenvalue of $L$. 

Let $G$ be a graph let $o_1$, $o_2$, $\dots$, $o_p$ be as defined in Definition \ref{def:orbits}. Then $L = \left(L_{i,j}(k)\right)$ is a $p \times p$ matrix and the entry $(i,j) = (o_i,o_j)$  is given by 
\begin{align*}
L_{i,j} &= \# o_i \text{-restricted colorings of } G \times \{1\} \colon \text{coloring of }  G \times \{ 2\} \text{ lies in }o_j \\
	    &= \frac{\chi_{G_{o_i,o_j}}(k)}{\# o_i}.
\end{align*}

The next result shows that $L$ behaves like a transfer matrix.
\begin{theorem}
\label{thm:L^n_entry}
Let $V(P_{n+1}) = \{1,2,\dots, n+1\}$ and let $G$ be any graph. Let $o_1, o_2,\dots, o_p$ be the orbits as defined in Definition \ref{def:orbits}. Then, for $k \geq \# \text{colors used in }o_i$, the $(o_i,o_j)$-entry of $L^n$  counts the number of proper $k$-colorings of $G \times P_{n+1}$, where $G \times \{1\}$ is fixed by a coloring in orbit $o_i$, and where the coloring of $G \times \{n+1\}$ lies $o_j$. 
\end{theorem}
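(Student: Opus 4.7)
The plan is to proceed by induction on $n$, exploiting the well-definedness of the entries of $L$ established in Theorem~\ref{thm:BasicGeometricFacts}(2). The base case $n=1$ is immediate from Definition~\ref{def:DefinitionL} together with the combinatorial reformulation in Remark~\ref{rem:CombInterpretation}: the entry $L_{o_i,o_j}(k)$ is by definition the number of proper $k$-colorings of $G\times P_2$ in which $G\times\{1\}$ carries a fixed representative of $o_i$ and $G\times\{2\}$ receives a coloring in $o_j$.

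For the inductive step, assume the claim holds for some $n\geq 1$, and write
\[
(L^{n+1})_{o_i,o_j} \;=\; \sum_{m=1}^{p} (L^n)_{o_i,o_m}\,L_{o_m,o_j}.
\]
By the inductive hypothesis, $(L^n)_{o_i,o_m}$ counts proper $k$-colorings of $G\times P_{n+1}$ such that $G\times\{1\}$ is colored by the chosen representative of $o_i$ and such that the coloring of $G\times\{n+1\}$ lies in $o_m$. For each such coloring, the restriction $c$ to $G\times\{n+1\}$ is some particular element of $o_m$. The number of proper extensions of $c$ to $G\times\{n+2\}$ whose values lie in the orbit $o_j$ is, by definition, the entry $L_{o_m, o_j}(k)$ computed with representative $c$. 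Here Theorem~\ref{thm:BasicGeometricFacts}(2) is crucial: it guarantees that this count depends only on the orbit $o_m$ and not on the particular $c\in o_m$ chosen, so we are allowed to multiply $(L^n)_{o_i,o_m}$ by $L_{o_m,o_j}(k)$ rather than summing different numbers over the colorings in $o_m$. Summing over $m$ produces precisely the number of proper $k$-colorings of $G\times P_{n+2}$ with $G\times\{1\}$ equal to the fixed representative of $o_i$ and $G\times\{n+2\}$ lying in $o_j$, which is exactly the assertion for $n+1$.

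The main subtlety, and the only nontrivial step, is the invariance under choice of representative invoked in the inductive step; one could phrase this as a bijection between extensions from $c$ and extensions from any $c'\in o_m$ by applying the permutation of colors (and the graph automorphism, if any) that takes $c$ to $c'$. This permutation preserves properness of colorings and preserves membership in $o_j$ of the coloring of $G\times\{n+2\}$, since the latter is an orbit of the same group action. Terms corresponding to orbits $o_m$ that require more than $k$ colors contribute zero on both sides (there are no such colorings of $G\times\{n+1\}$), so the hypothesis $k\geq\#\text{colors used in }o_i$ suffices, consistent with Remark~\ref{rem:SizeOfL}. This completes the induction.
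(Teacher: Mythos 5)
Your proof is correct and follows essentially the same route as the paper's: induction on $n$ via the matrix-product decomposition $(L^{n+1})_{o_i,o_j}=\sum_m (L^n)_{o_i,o_m}\,L_{o_m,o_j}$. You are a bit more careful than the paper in spelling out that the inductive step hinges on the representative-independence guaranteed by Theorem~\ref{thm:BasicGeometricFacts}(2) (so that the number of extensions from a given coloring of $G\times\{n+1\}$ depends only on its orbit), and in noting that orbits requiring more than $k$ colors contribute zero; the paper's proof leaves both points implicit.
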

Before we prove Theorem \ref{thm:L^n_entry}, we illustrate the statement:
\begin{example}[Example \ref{ex:CompactifiedTMM1} continued]
Let $G = P_3$ with orbits $o_1 = \{\{1,3 \}, \{2\} \}$ and $o_2 = \{\{1\}, \{ 2\}, \{ 3\} \}$. Recall that 
\[L = 
\left(
\begin{array}{cc}
k^2-3k+3  &  k^3-6k^2+13k-10 \\
k^2-4k+5  &  k^3-6k^2+14k-13 \\
\end{array}
\right).
\]
Then $(L^{5})_{1,1}$ counts the number of colorings of $G \times P_6$, where the two black dots indicate that the two corresponding nodes need to be colored by the same color.
\begin{figure}[h]
\hspace{3.5cm}
\begin{tikzpicture}[darkstyle/.style={circle,draw,fill=gray!40,minimum size=20}, scale = .5]
\draw[step=1.5,thick] (0,-3) grid (7.5,0);
\foreach \x in {0,...,5}
{   \foreach \y in {0,...,2}
    {   \pgfmathtruncatemacro{\nodelabel}{\x+\y*5+1}
        \node[circle,draw=black,fill=white!80!black,minimum size=20, scale=.4] (\nodelabel) at (1.5*\x,-1.5*\y) { };
    }
}
\node[circle,fill=black,minimum size=20, scale=.4] at (7.5,0) {};
\node[circle,fill=black,minimum size=20, scale=.4] at (7.5,-3) {};
\node[left] at (-.5,0) {$1$};
\node[left] at (-.5,-1.5) {$2$};
\node[left] at (-.5,-3) {$1$};
\end{tikzpicture}
\caption{$G \times P_6$.}
\label{fig:TMMEx}
\end{figure}
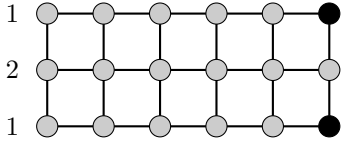
\end{example}

\begin{proof}[Proof of Theorem \ref{thm:L^n_entry}]
We induct on $n$. By construction, the statement is true for $n=1$, so let the statement be true for $G \times P_m$ for all $m\leq n$. Now  $V(P_{n+1})= \{1,2\dots,n+1 \}$. We denote the $(o_i,o_j)-$entry of $L^m \in \mathbb{Z}^{ p \times p}$ by $L^m_{i,j}$. Then
\[
L^{n}_{i,j} = (L^{n-1}L)_{i,j} = \sum_{k=1} ^p L^{n-1}_{i,k}L_{k,j}.
\]
By induction hypothesis, the entry $L^{n-1}_{i,k}$ counts the number of colorings where the the coloring of $G \times \{ 1\}$ is fixed by a representative in $o_i$ and the coloring of $G\times \{n\}$ lies in orbit $o_k$. Moreover, $L_{k,j}$ counts the colorings where the first $G$ is fixed by a representative in $o_k$ and the coloring of the second $G$ lies in $o_j$. Therefore, $L^{n-1}_{i,k}L_{k,j}$ counts the colorings where the coloring of $G \times \{1 \}$ is fixed by a representative of $o_i$, the coloring of $G\times \{n \}$ lies in $o_k$, and the coloring of $G \times \{ n+1\}$ lies in $o_j$. The sum is taken over all possible orbits and the claim follows.
\end{proof}
\begin{remark}
This shows that the entries of $L^n$ can be interpreted as sums of Ehrhart polynomials of (induced) inside-out polytopes assuming that the dilation factor is big enough. The inside-out polytopes can be explicitly described by following the construction from the proof of Proposition \ref{thm:BasicGeometricFacts}.
\end{remark} 
Moreover, this enables us to directly compute the chromatic polynomial of $G \times P_{n+1}$ from $L^n$.

\begin{corollary}
\label{cor:ChromPolyProduct}
Let $G \times P_{n+1}$ and $L$ be as above. Then
\begin{equation}
\label{eq:chromatic_poly_P_n}
\chi_{G \times P_{n+1}}(k) = (w_1(k),\dots, w_p(k)) L^n \mathbf{1},
\end{equation}
where $w_i (k)$ is the size of $o_i$ and $\mathbf{1}: = (1,\dots, 1)^t$.
\end{corollary}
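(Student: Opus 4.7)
The plan is to combine Theorem~\ref{thm:L^n_entry} with a double-counting argument that partitions all proper $k$-colorings of $G \times P_{n+1}$ according to the orbit of the restriction to $G \times \{1\}$.

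First, I would observe that by Theorem~\ref{thm:L^n_entry}, for any fixed representative $c_i$ of orbit $o_i$, the quantity $(L^n)_{i,j}$ counts proper $k$-colorings of $G \times P_{n+1}$ whose restriction to $G \times \{1\}$ equals $c_i$ and whose restriction to $G \times \{n+1\}$ lies in orbit $o_j$. Summing over $j$ removes the restriction on the coloring of $G \times \{n+1\}$, so $(L^n \mathbf{1})_i$ equals the total number of proper $k$-colorings of $G \times P_{n+1}$ extending the fixed representative $c_i$ of $o_i$.

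Next, I would verify that this count depends only on the orbit $o_i$ and not on the representative. If $c$ and $c'$ are two colorings of $G$ in the same orbit, then there exist $\sigma \in \mathfrak{S}_k$ and $\tau \in \mathcal{G}$ with $c' = \sigma \circ c \circ \tau^{-1}$. The map sending a proper $k$-coloring $\tilde c$ of $G \times P_{n+1}$ to $\sigma \circ \tilde c \circ (\tau \times \operatorname{id})^{-1}$ is a bijection on proper $k$-colorings that sends extensions of $c$ to extensions of $c'$. Hence the number of proper extensions is the same for every representative of $o_i$, so every coloring in $o_i$ contributes $(L^n \mathbf{1})_i$ many proper $k$-colorings of $G \times P_{n+1}$, yielding $w_i(k) \cdot (L^n \mathbf{1})_i$ proper $k$-colorings of $G \times P_{n+1}$ whose restriction to $G \times \{1\}$ lies in $o_i$.

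Finally, the orbits $o_1, \dots, o_p$ partition the set of proper $k$-colorings of $G$, and every proper $k$-coloring of $G \times P_{n+1}$ restricts to a proper $k$-coloring of $G \times \{1\}$. Summing over $i$ therefore gives
\[
\chi_{G \times P_{n+1}}(k) = \sum_{i=1}^p w_i(k) \cdot (L^n \mathbf{1})_i = (w_1(k), \dots, w_p(k)) L^n \mathbf{1},
\]
as desired. The argument is essentially a bookkeeping exercise; the only subtlety worth checking carefully is the representative-independence of $(L^n \mathbf{1})_i$, which is already implicit in Theorem~\ref{thm:BasicGeometricFacts}(2) applied inductively through Theorem~\ref{thm:L^n_entry}.
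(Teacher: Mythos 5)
Your proof is correct and follows essentially the same approach as the paper: apply Theorem~\ref{thm:L^n_entry}, sum over the ending orbit $o_j$ to obtain $(L^n\mathbf{1})_i$ as the number of proper extensions of a fixed representative of $o_i$, argue representative-independence by symmetry, and partition $\chi_{G\times P_{n+1}}(k)$ by the orbit of the restriction to $G\times\{1\}$. The only difference is cosmetic: where the paper invokes ``by symmetry'' and cites Theorem~\ref{thm:BasicGeometricFacts}, you spell out the explicit bijection $\tilde c \mapsto \sigma\circ\tilde c\circ(\tau\times\operatorname{id})^{-1}$, which is a welcome bit of extra rigor.
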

\begin{proof}
Let $V(P_n) = \{1,2,\dots, n \}$. The $i^{\text{th}}$ entry $L_i$ of $ L^n \mathbf{1}$ counts the number of colorings where $G \times \{1\}$ is colored by a representative of $o_i$. By symmetry, the total number of colorings with the coloring of $G \times \{ 1\}$ being in $o_i$ equals
\[w_i(k) L_i \]
by Theorem~\ref{thm:BasicGeometricFacts}. Now $(w_1(k),\dots, w_p(k)) L^n \mathbf{1}$ sums over all possible orbits and the claim now directly follows.
\end{proof}
\begin{remark}
Even though the definition of $L$ implicitly assumes that the number of colors  $k \geq \# V(G)$, the corollary makes sense \emph{for all $k$}. If $k \leq \# V(G)$, then the weights $w_i$ of the orbits using more than $k$ colors are $0$.
\end{remark}
\begin{example}[Example \ref{ex:CompactifiedTMM1} continued]
The chromatic polynomial of $P_3 \times P_6$ is
\begin{equation*}
\chi(k) = (w_1, w_2) \left(
\begin{array}{cc}
k^2-3k+3  &  k^3-6k^2+13k-10 \\
k^2-4k+5  &  k^3-6k^2+14k-13 \\
\end{array}
\right)^{6-1} \begin{pmatrix}
1 \\ 1
\end{pmatrix},
\end{equation*}
where $w_1 = k(k-1)$ and $w_2 = k(k-1)(k-2)$.
\begin{figure}[H]\hspace{4 cm}
\begin{tikzpicture}[darkstyle/.style={circle,draw,fill=gray!40,minimum size=20}, scale = .5]
\draw[step=1.5,thick] (0,-3) grid (7.5,0);
\foreach \x in {0,...,5}
{   \foreach \y in {0,...,2}
    {   \pgfmathtruncatemacro{\nodelabel}{\x+\y*5+1}
        \node[circle,draw=black,fill=white!80!black,minimum size=20, scale=.4] (\nodelabel) at (1.5*\x,-1.5*\y) { };
    }
}
\end{tikzpicture}
\caption{$P_3 \times P_{ 6 }$.}
\end{figure}
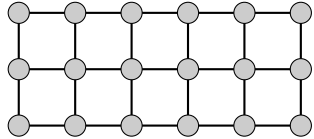

In general, the chromatic polynomial of $P_3 \times P_n$ equals
\begin{equation*}
\chi(k) = (w_1, w_2) \left(
\begin{array}{cc}
k^2-3k+3  &  k^3-6k^2+13k-10 \\
k^2-4k+5  &  k^3-6k^2+14k-13 \\
\end{array}
\right)^{n-1} \begin{pmatrix}
1 \\ 1
\end{pmatrix},
\end{equation*}
where $w_1 = k(k-1)$ and $w_2 = k(k-1)(k-2)$.
\end{example}

\begin{example}[Example \ref{ex:hiddensymmetry} continued]
Recall that $G = C_5$ and that the matrix $L$ was given by
\begin{tiny}
\[
\left(
\begin{array}{ccc}
k^5-15k^4+95k^3-325k^2+609k-501 &  5k^3-40k^2+125k-150 & 5k^4-55k^3+250k^2-565k+535 \\
k^5-15k^4+93k^3-301k^2+510k-360 &  5k^3-36k^2+96k-93     &  5k^4-53k^3+224k^2-449k+357 \\ 
k^5-15k^4+94k^3-313k^2+559k-428 & 5k^3-38k^2+110k-119   & 5k^4-54k^3+237k^2-506k+441 \\
\end{array}\right).
\]
\end{tiny}
Define a row vector with the size of each orbit, both counting the coloring and automorphism symmetries:
\[ v = ( k(k-1)(k-2)(k-3)(k-4), \,\,\,\, 5k(k-1)(k-2), \,\,\,\, 5k(k-1)(k-2)(k-3) )^T \]
Let $I$ be the $3 \times 3$ identity matrix and $\bf{1}$ the all-ones column vector of height 3. Then we have a nice formal generating function
\[ 
\begin{array}{rcl}
 \Xi_G(k,z) & = & \displaystyle \sum_{n=0}^\infty \chi_{G \times P_{n+1}}(k)z^n \\ 
 & = & \displaystyle \sum_{n=0}^\infty  vL^n {\bf{1}} z^n \\ 
 & = & \displaystyle v \left ( \sum_{n=0}^\infty  (zL)^n  \right) {\bf{1}}  \\ 
 & = & v(I-zL)^{-1}\bf{1}. 
 \end{array}
 \]
In general, if $L$ is an $m \times m$ matrix, we expect that
\[   \Xi_G(k,z) = 
\begin{array}{c}
\textrm{polynomial of $z$-degree $m-1$} \\
\hline
\textrm{polynomial of $z$-degree $m$} \\
\end{array}
\]
by calculating $(I-zL)^{-1}$ using cofactors. For some $G$ there is a mysterious cancellation and the $z$-degree of the denominator of $\Xi_G(k,z)$ is smaller than the size of the matrix. This is the case in our example, as
\[ \Xi_{C_5}(k,z) = k(k-1)(k-2) \frac{p_1(k)z+p_0(k)}{q_2(k)z^2+q_1(k)z+q_0(k)}  \]
where
\begin{small}
\[
\begin{array}{rcl}
p_0(k) & = & k^2-2k+2, \\
p_1(k) & = & -k^5+11k^4-44k^3+73k^2-42k+14, \\
q_0(k) & = & 1, \\
q_1(k) & = & -k^5+10k^4-46k^3+124k^2-198k+148, \\
 q_2(k) & = & k^8-19k^7+159k^6-767k^5+2339k^4-4627k^3+5800k^2-4212k+1362.
 \end{array}
 \]
\end{small}
\end{example}
In the previous example, the degree of the denominator of $\Xi_G(k,z)$ is smaller than expected, as if there was a symmetry waiting to be accounted for.
\begin{definition}
A graph $G$ has a \emph{hidden symmetry} if the denominator of 
\[ \Xi_G(k,z)  =  \sum_{n=0}^\infty \chi_{G \times P_{n+1}}(k)z^n \]
has a $z$--degree less than the order of
\[
\{ c : V(G) \rightarrow \mathbb{Z}_{\geq 1}\colon \textrm{$c$ is a proper coloring of $G$}  \}  /  \sim
\]
where $c \sim c'$ if $c=\alpha c' \beta$ for a bijection $\alpha$ of $\mathbb{N}$ and an automorphism $\beta$ of $G.$
\end{definition}

\begin{example}
We have done some computer calculations to tabulate graphs with hidden symmetries. The connected graphs with at most five vertices are in Figure~\ref{fig:hidden}.

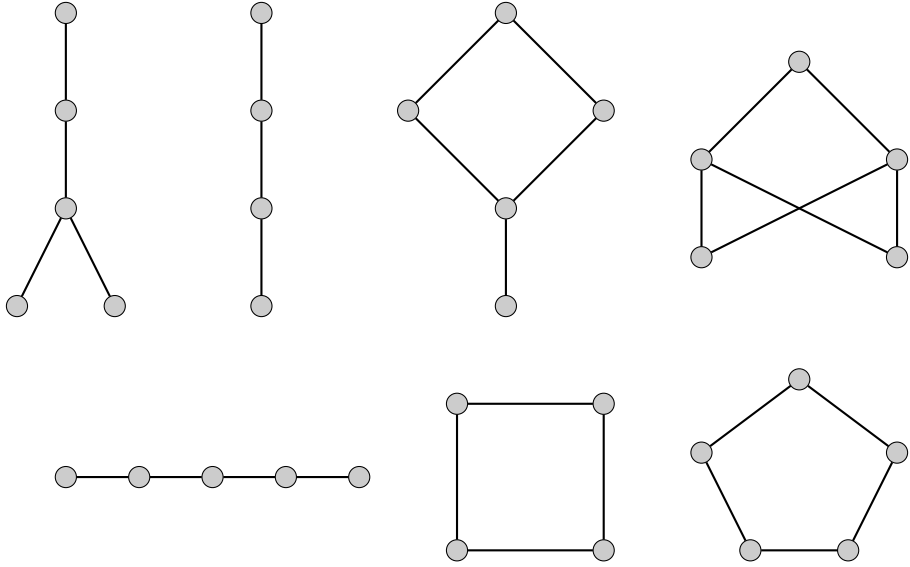
\begin{figure}[H]
\centering
\begin{tikzpicture}[darkstyle/.style={circle,draw,fill=gray!40,minimum size=20},scale= .65]
\node[circle,draw=black,fill=white!80!black,minimum size=20, scale=.4] (-1) at (0,-2+.5) {};
\node[circle,draw=black,fill=white!80!black,minimum size=20, scale=.4] (-2) at (1.5,-2+.5) {};
\node[circle,draw=black,fill=white!80!black,minimum size=20, scale=.4] (-3) at (3,-2+.5) {};
\node[circle,draw=black,fill=white!80!black,minimum size=20, scale=.4] (-4) at (4.5,-2+.5) {};
\node[circle,draw=black,fill=white!80!black,minimum size=20, scale=.4] (-5) at (6,-2+.5) {};
\draw [thick](-1) --(-2) -- (-3) -- (-4) -- (-5);

\node[circle,draw=black,fill=white!80!black,minimum size=20, scale=.4] (-6) at (3+1,2) {};
\node[circle,draw=black,fill=white!80!black,minimum size=20, scale=.4] (-7) at (3+1,4) {};
\node[circle,draw=black,fill=white!80!black,minimum size=20, scale=.4] (-8) at (3+1,6) {};
\node[circle,draw=black,fill=white!80!black,minimum size=20, scale=.4] (-9) at (3+1,8) {};

\draw [thick](-6) --(-7) -- (-8) -- (-9);


\node[circle,draw=black,fill=white!80!black,minimum size=20, scale=.4] (1) at (8,-1+1) {};
\node[circle,draw=black,fill=white!80!black,minimum size=20, scale=.4] (2) at (11,-1+1) {};
\node[circle,draw=black,fill=white!80!black,minimum size=20, scale=.4] (3) at (8,-4+1) {};
\node[circle,draw=black,fill=white!80!black,minimum size=20, scale=.4] (4) at (11,-4+1) {};

\draw [thick] (1) --(2) -- (4) -- (3) -- (1);

\node[circle,draw=black,fill=white!80!black,minimum size=20, scale=.4] (5) at (14,-4+1) {};
\node[circle,draw=black,fill=white!80!black,minimum size=20, scale=.4] (6) at (16,-4+1) {};
\node[circle,draw=black,fill=white!80!black,minimum size=20, scale=.4] (7) at (17,-2+1) {};
\node[circle,draw=black,fill=white!80!black,minimum size=20, scale=.4] (8) at (15,-0.5+1) {};
\node[circle,draw=black,fill=white!80!black,minimum size=20, scale=.4] (9) at (13,-2+1) {};

\draw [thick](5) --(6) -- (7) -- (8) -- (9) -- (5);

\node[circle,draw=black,fill=white!80!black,minimum size=20, scale=.4] (10) at (-1,2) {};
\node[circle,draw=black,fill=white!80!black,minimum size=20, scale=.4] (11) at (1,2) {};
\node[circle,draw=black,fill=white!80!black,minimum size=20, scale=.4] (12) at (0,4) {};
\node[circle,draw=black,fill=white!80!black,minimum size=20, scale=.4] (13) at (0,6) {};
\node[circle,draw=black,fill=white!80!black,minimum size=20, scale=.4] (14) at (0,8) {};

\draw [thick](10) --(12) -- (13) -- (14); 
\draw [thick](11) --(12); 

\node[circle,draw=black,fill=white!80!black,minimum size=20, scale=.4] (15) at (8+1,2) {};
\node[circle,draw=black,fill=white!80!black,minimum size=20, scale=.4] (16) at (8+1,4) {};
\node[circle,draw=black,fill=white!80!black,minimum size=20, scale=.4] (17) at (6+1,6) {};
\node[circle,draw=black,fill=white!80!black,minimum size=20, scale=.4] (18) at (10+1,6) {};
\node[circle,draw=black,fill=white!80!black,minimum size=20, scale=.4] (19) at (8+1,8) {};

\draw [thick](15) --(16) -- (18) -- (19) --(17) -- (16);

\node[circle,draw=black,fill=white!80!black,minimum size=20, scale=.4] (20) at (13,2+1) {};
\node[circle,draw=black,fill=white!80!black,minimum size=20, scale=.4] (21) at (13,4+1) {};
\node[circle,draw=black,fill=white!80!black,minimum size=20, scale=.4] (22) at (15,6+1) {};
\node[circle,draw=black,fill=white!80!black,minimum size=20, scale=.4] (23) at (17,4+1) {};
\node[circle,draw=black,fill=white!80!black,minimum size=20, scale=.4] (24) at (17,2+1) {};

\draw [thick](20) --(21) -- (22) -- (23) --(24) -- (21); 
\draw [thick](20) --(23); 
\end{tikzpicture}
\caption{The connected graphs on at most five vertices with a hidden symmetry. \label{fig:hidden}}
\end{figure}
One could speculate that hidden symmetry is something fairly trivial, since the common factor of the numerator and denominator when calculating  $\Xi_G(k,z)$ from cofactors is something straightforward. A piece of it usually seems be a power of $(z-1).$ But for example for $G=C_6$ the common factor is the non-trivial factor
$ (z-1)^3(k^7z^4-19k^6z^4+k^6z^3+147k^5z^4-20k^5z^3-598k^4z^4+157k^4z^3+1381k^3z^4-3k^4z^2-627k^3z^3-1821k^2z^4+37k^3z^2+1349k^2z^3+1289kz^4-173k^2z^2-1483kz^3-384z^4+3k^2z+364kz^2+659z^3-18kz-309z^2+35z-1).$ We conjecture that all paths and cycles on at least four vertices have a hidden symmetry.
\end{example}

\begin{theorem}
\label{thm:Reciprocity_L^n}
Let $L\in \mathbb{Z}^{p \times p}$ be as above, let $L^n_{i} : = \sum_{k=1}^p \left(L^n\right)_{i,k}$ be the $i^{\text{th}}$ row sum of $L^n$, and let $V(P_{n+1}) = \{1,2,\dots, n+1 \}$. Then, for $k \geq N=\# V(G)$, we have
\begin{equation}
\label{eq:reciprocity_L^n}
L^n_{i} (-k) =(-1)^{Nn} \# (\alpha, c)\text{ of } G \times P_{n+1} \text{ where } G \times \{ 1\} \text{ is colored by rep. of } o_i,
\end{equation}
where $(\alpha,c)$ is a pair of an acyclic orientation $\alpha$ and a compatible $o_i$-restricted $k$-coloring $c$.
\end{theorem}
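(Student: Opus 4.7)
The plan is to identify the row sum $L^n_i(k)$ with a restricted chromatic polynomial on the larger graph $\Gamma := G \times P_{n+1}$, and then invoke Theorem~\ref{thm:restrictedReciprocity} directly. To this end, fix a representative coloring $c'$ of the orbit $o_i$ on $V' := V(G \times \{1\})$. By Theorem~\ref{thm:L^n_entry}, for $k \geq N$, the entry $(L^n)_{i,j}(k)$ counts proper $k$-colorings of $\Gamma$ extending $c'$ on $V'$ and whose restriction to $V(G \times \{n+1\})$ lies in orbit $o_j$. Since every proper $k$-coloring of $\Gamma$ extending $c'$ must restrict to \emph{some} orbit on $G \times \{n+1\}$, summing over $j$ gives
\begin{equation*}
L^n_i(k) \;=\; \sum_{j=1}^{p} (L^n)_{i,j}(k) \;=\; \chi_{c',\Gamma}(k)
\end{equation*}
for all $k \geq N$. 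Because both sides are polynomials in $k$ (the left by construction of $L$ and Theorem~\ref{thm:BasicGeometricFacts}, the right by Theorem~\ref{thm:restrictedReciprocity}) that agree on infinitely many integers, they coincide as polynomials in $k$.

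The second step is to apply the reciprocity part of Theorem~\ref{thm:restrictedReciprocity} to $\chi_{c',\Gamma}$. With $\Gamma = G \times P_{n+1}$ and $V' = V(G \times \{1\})$ we have
\begin{equation*}
s \;=\; \#V(\Gamma) - \#V' \;=\; N(n+1) - N \;=\; Nn,
\end{equation*}
so the theorem yields
\begin{equation*}
\chi_{c',\Gamma}(-k) \;=\; (-1)^{Nn}\,\#\{(\alpha,c) : \alpha \text{ acyclic orient. of }\Gamma,\ c \text{ compatible, } c|_{V'} = c'\}.
\end{equation*}
Combining with the identification from the first step gives exactly the desired formula \eqref{eq:reciprocity_L^n}.

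In summary, the proof is essentially a one-line reduction: row-summing eliminates the constraint on the orbit of $G\times\{n+1\}$ and leaves only the restriction of the coloring on $V' = V(G\times\{1\})$, which is precisely the setup of Theorem~\ref{thm:restrictedReciprocity}. The only point requiring care is verifying that the notion of compatibility used in Theorem~\ref{thm:restrictedReciprocity} matches the one in the statement here — namely, an acyclic orientation $\alpha$ together with a (not necessarily proper) $k$-coloring $c$ that extends $c'$, with the orientation of any two edges from a vertex $v \notin V'$ to vertices of $V'$ carrying the same $c'$-color forced to agree. This matching is by definition. I expect no genuine obstacle: once Theorem~\ref{thm:L^n_entry} and Theorem~\ref{thm:restrictedReciprocity} are in hand, the argument is a bookkeeping exercise in orbit sums and a single substitution $k \mapsto -k$.
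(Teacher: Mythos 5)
Your proof is correct and follows essentially the same route as the paper: identify the row sum $L^n_i(k)$ with the restricted chromatic polynomial $\chi_{c',\Gamma}(k)$ for $\Gamma = G \times P_{n+1}$ via Theorem~\ref{thm:L^n_entry}, then apply the reciprocity statement of Theorem~\ref{thm:restrictedReciprocity} with $s = N(n+1) - N = Nn$. The paper's own proof is terser but runs through the same two steps; your version simply makes the bookkeeping explicit.
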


\begin{proof}
$L^n_i (k)$ counts the number of colorings of $G \times P_{n+1}$, where $G \times \{ 1\}$ is fixed by a coloring $c'$. Now one can apply Theorem \ref{thm:restrictedReciprocity} and the claim follows.
\end{proof}

\begin{corollary}
\label{cor:Reciprocity}
Let $k\geq N$ Then
\begin{equation}
\label{eq:reciprocity_L}
\sum_{j = 1}^m L_{i , j}(-k) = (-1) \cdot \# (\alpha, c) \text{ where first } G \text{ is fixed by a representative of } o_i \text{,}
\end{equation}
where $(\alpha, c)$ is a pair of an acyclic orientation and compatible $k$-coloring.
\end{corollary}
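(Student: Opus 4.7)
The plan is to deduce Corollary~\ref{cor:Reciprocity} as the $n=1$ specialization of Theorem~\ref{thm:Reciprocity_L^n}. First I would note the trivial identity $L = L^1$, so that the left-hand side of~\eqref{eq:reciprocity_L} is exactly
\[
\sum_{j=1}^{p} L_{i,j}(-k) \;=\; L^1_i(-k),
\]
which is the row-sum quantity governed by~\eqref{eq:reciprocity_L^n}. Applying Theorem~\ref{thm:Reciprocity_L^n} with $n = 1$ and $V(P_2) = \{1,2\}$ immediately gives
\[
L^1_i(-k) \;=\; (-1)^{N}\cdot \#\bigl\{(\alpha, c) \text{ of } G \times P_2 : G \times \{1\} \text{ is colored by the chosen repr.\ of } o_i\bigr\},
\]
where $(\alpha, c)$ ranges over pairs of an acyclic orientation $\alpha$ of $G \times P_2$ and a compatible $o_i$-restricted $k$-coloring $c$. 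This is precisely the statement of the corollary, with the literal $(-1)$ on the right-hand side of~\eqref{eq:reciprocity_L} read as the parity factor $(-1)^{Nn}$ from Theorem~\ref{thm:Reciprocity_L^n} specialized to $n = 1$ (equivalently, as the sign $(-1)^{s}$ from Theorem~\ref{thm:restrictedReciprocity} applied to $\Gamma = G \times P_2$ with $V' = V(G) \times \{1\}$, so that $s = 2N - N = N$).

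The single hypothesis required is $k \geq N$, which is assumed in the statement. Since the argument is purely a substitution, there is no genuine technical obstacle here; all the combinatorial and geometric work has already been carried out in Theorem~\ref{thm:restrictedReciprocity} (via the induced inside-out polytope $(\overline{P},\overline{\mathcal{H}})$ and the Beck--Zaslavsky reciprocity~\eqref{eq:reciprocity}) and then propagated to row sums of powers of $L$ in Theorem~\ref{thm:Reciprocity_L^n}. The only thing one has to be slightly careful about is the bookkeeping of the sign exponent $N$ versus $s$, and of the orbit-fixing convention on $G \times \{1\}$, both of which match automatically when $n=1$.
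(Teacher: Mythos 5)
Your proof is correct and matches the intended (unstated) derivation: Corollary~\ref{cor:Reciprocity} is exactly the $n=1$ case of Theorem~\ref{thm:Reciprocity_L^n}, since $L = L^1$ and the row sum $\sum_j L_{i,j}(-k)$ is $L^1_i(-k)$. You are also right to flag that the literal $(-1)$ on the right-hand side of \eqref{eq:reciprocity_L} only matches when $N$ is odd; by Theorem~\ref{thm:Reciprocity_L^n} (or directly from Theorem~\ref{thm:restrictedReciprocity} with $\Gamma = G \times P_2$, $s = 2N-N=N$) the sign should be $(-1)^N$, so the paper's statement appears to contain a typo.
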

\begin{example}[\ref{ex:CompactifiedTMM1} continued]
Figure \ref{fig:restricted reciprocity} illustrates Corollary \ref{cor:Reciprocity} for $P_3 \times P_2$, where $k=3$, and where
\[L = 
\left(
\begin{array}{cc}
k^2-3k+3  &  k^3-6k^2+13k-10 \\
k^2-4k+5  &  k^3-6k^2+14k-13 \\
\end{array}
\right).
\]
The orientation of the dashed edges can be chosen arbitrarily. The number below the graph gives the multiplicity of the given case. Therefore, the sum of the gray numbers (up to a sign) equals the evaluation of the second row sum of $L$ at $-3$
\begin{align*}
&(-3)^2 - 4 \cdot (-3)+5 + (-4)^3- 6\cdot(-3)^2 +14 \cdot (-3) - 13 = - 110 =\\
 = &(-1)\cdot (8+2+4+4+2+2+8+8+1+4+8+2+2+\\
 +& 1+1+2+4+1+8+2+2+8+2+4+4+8+8).
\end{align*}

\begin{figure}[H]

\begin{tikzpicture}[scale= .65]
\draw [thick, ->](0,0) --(0,1);
\draw [thick,->] (0,1)--(0,2);
\draw[ thick,dashed](1,0) --(1,1);
\draw[thick,->] (1,1)--(1,2);
\draw [ thick,->]  (0,0) --(1,0);
\draw [ thick,dashed]  (1,1) --(0,1);
\draw [ thick,dashed]  (0,2) -- (1,2);
\node [left] at (0,2) {$1$};
\node [left] at (0,1) {$2$};
\node [left] at (0,0) {$3$};
\node [right] at (1,2) {$1$};
\node [right] at (1,1) {$2$};
\node [right] at (1,0) {$2$};
\node[below, gray] at (0.5,0){$8$};

\draw [thick, ->](2,0) --(2,1);
\draw [thick,->] (2,1)--(2,2);
\draw[ thick,<-](3,0) --(3,1);
\draw[thick,dashed] (3,1)--(3,2);
\draw [ thick,->]  (2,0) --(3,0);
\draw [ thick,->]  (3,1) --(2,1);
\draw [ thick,<-]  (2,2) -- (3,2);
\node [left] at (2,2) {$1$};
\node [left] at (2,1) {$2$};
\node [left] at (2,0) {$3$};
\node [right] at (3,2) {$3$};
\node [right] at (3,1) {$3$};
\node [right] at (3,0) {$1$};
\node[below, gray] at (2.5,0){$2$};

\draw [thick, ->](4,0) --(4,1);
\draw [thick,->] (4,1)--(4,2);
\draw[ thick,->](5,0) --(5,1);
\draw[thick,<-](5,1)--(5,2);
\draw [ thick,dashed]  (4,0) --(5,0);
\draw [ thick]  (5,1) --(4,1);
\draw [ thick,<-]  (4,2) -- (5,2);
\node [left] at (4,2) {$1$};
\node [left] at (4,1) {$2$};
\node [left] at (4,0) {$3$};
\node [right] at (5,2) {$3$};
\node [right] at (5,1) {$1$};
\node [right] at (5,0) {$3$};
\node[below, gray] at (4.5,0){$2$};

\draw [thick, ->](6,0) --(6,1);
\draw [thick,->] (6,1)--(6,2);
\draw[ thick,dashed](7,0) --(7,1);
\draw[thick,->] (7,1)--(7,2);
\draw [ thick,dashed]  (6,0) --(7,0);
\draw [ thick,->]  (7,1) --(6,1);
\draw [ thick,dashed]  (6,2) -- (7,2);
\node [left] at (6,2) {$1$};
\node [left] at (6,1) {$2$};
\node [left] at (6,0) {$3$};
\node [right] at (7,2) {$1$};
\node [right] at (7,1) {$3$};
\node [right] at (7,0) {$3$};
\node[below, gray] at (6.5,0){$8$};

\draw [thick, ->](8,0) --(8,1);
\draw [thick,->] (8,1)--(8,2);
\draw[ thick,<-](9,0) --(9,1);
\draw[thick,dashed] (9,1)--(9,2);
\draw [ thick,->]  (8,0) --(9,0);
\draw [ thick,->]  (9,1) --(8,1);
\draw [ thick,<-]  (8,2) -- (9,2);
\node [left] at (8,2) {$1$};
\node [left] at (8,1) {$2$};
\node [left] at (8,0) {$3$};
\node [right] at (9,2) {$3$};
\node [right] at (9,1) {$3$};
\node [right] at (9,0) {$2$};
\node[below, gray] at (8.5,0){$2$};

\draw [thick, ->](10,0) --(10,1);
\draw [thick,->] (10,1)--(10,2);
\draw[ thick,->](11,0) --(11,1);
\draw[thick,<-] (11,1)--(11,2);
\draw [ thick,dashed]  (10,0) --(11,0);
\draw [ thick,dashed]  (11,1) --(10,1);
\draw [ thick,<-]  (10,2) -- (11,2);
\node [left] at (10,2) {$1$};
\node [left] at (10,1) {$2$};
\node [left] at (10,0) {$3$};
\node [right] at (11,2) {$3$};
\node [right] at (11,1) {$2$};
\node [right] at (11,0) {$3$};
\node[below, gray] at (10.5,0){$4$};

\draw [thick, ->](12,0) --(12,1);
\draw [thick,->] (12,1)--(12,2);
\draw[ thick,dashed] (13,0) --(13,1);
\draw[thick,->] (13,1)--(13,2);
\draw [ thick,dashed]  (12,0) --(13,0);
\draw [ thick,->]  (13,1) --(12,1);
\draw [ thick,<-]  (12,2) -- (13,2);
\node [left] at (12,2) {$1$};
\node [left] at (12,1) {$2$};
\node [left] at (12,0) {$3$};
\node [right] at (13,2) {$2$};
\node [right] at (13,1) {$3$};
\node [right] at (13,0) {$3$};
\node[below, gray] at (12.5,0){$4$};

\draw [thick, ->](14,0) --(14,1);
\draw [thick,->] (14,1)--(14,2);
\draw[ thick,dashed](15,0) --(15,1)--(15,2);
\draw [ thick,->]  (14,0) --(15,0);
\draw [ thick,dashed]  (15,1) --(14,1);
\draw [ thick,<-]  (14,2) -- (15,2);
\node [left] at (14,2) {$1$};
\node [left] at (14,1) {$2$};
\node [left] at (14,0) {$3$};
\node [right] at (15,2) {$2$};
\node [right] at (15,1) {$2$};
\node [right] at (15,0) {$2$};
\node[below, gray] at (14.5,0){$8$};

\draw [thick, ->](16,0) --(16,1);
\draw [thick,->] (16,1)--(16,2);
\draw[ thick,dashed](17,0) --(17,1)--(17,2);
\draw [ thick,dashed]  (16,0) --(17,0);
\draw [ thick,->]  (17,1) --(16,1);
\draw [ thick,<-]  (16,2) -- (17,2);
\node [left] at (16,2) {$1$};
\node [left] at (16,1) {$2$};
\node [left] at (16,0) {$3$};
\node [right] at (17,2) {$3$};
\node [right] at (17,1) {$3$};
\node [right] at (17,0) {$3$};
\node[below, gray] at (16.5,0){$8$};

\draw [thick, ->](0,4) --(0,5);
\draw [thick,->] (0,5)--(0,6);
\draw[ thick,dashed](1,4) --(1,5);
\draw[thick,<-] (1,5)--(1,6);
\draw [ thick,->]  (0,4) --(1,4);
\draw [ thick,dashed]  (1,5) --(0,5);
\draw [ thick,<-]  (0,6) -- (1,6);
\node [left] at (0,6) {$1$};
\node [left] at (0,5) {$2$};
\node [left] at (0,4) {$3$};
\node [right] at (1,6) {$3$};
\node [right] at (1,5) {$2$};
\node [right] at (1,4) {$2$};
\node[below, gray] at (.5,4){$4$};

\draw [thick, ->](2,4) --(2,5);
\draw [thick,->] (2,5)--(2,6);
\draw[ thick,->](3,4) --(3,5);
\draw[thick,->](3,5)--(3,6);
\draw [ thick,dashed]  (2,4) --(3,4);
\draw [ thick,dashed]  (3,5) --(2,5);
\draw [ thick,dashed]  (2,6) -- (3,6);
\node [left] at (2,6) {$1$};
\node [left] at (2,5) {$2$};
\node [left] at (2,4) {$3$};
\node [right] at (3,6) {$1$};
\node [right] at (3,5) {$2$};
\node [right] at (3,4) {$3$};
\node[below, gray] at (2.5,4){$8$};

\draw [thick, ->](4,4) --(4,5);
\draw [thick,->] (4,5)--(4,6);
\draw[ thick,<-] (5,4) --(5,5);
\draw[thick,->] (5,5)--(5,6);
\draw [ thick,->]  (4,4) --(5,4);
\draw [ thick,->]  (5,5) --(4,5);
\draw [ thick,dashed]  (4,6) -- (5,6);
\node [left] at (4,6) {$1$};
\node [left] at (4,5) {$2$};
\node [left] at (4,4) {$3$};
\node [right] at (5,6) {$1$};
\node [right] at (5,5) {$3$};
\node [right] at (5,4) {$2$};
\node[below, gray] at (4.5,4){$2$};

\draw [thick, ->](6,4) --(6,5);
\draw [thick,->] (6,5)--(6,6);
\draw[ thick,->](7,4) --(7,5);
\draw[thick,<-] (7,5)--(7,6);
\draw [ thick,dashed]  (6,4) --(7,4);
\draw [ thick,<-]  (7,5) --(6,5);
\draw [ thick,<-]  (6,6) -- (7,6);
\node [left] at (6,6) {$1$};
\node [left] at (6,5) {$2$};
\node [left] at (6,4) {$3$};
\node [right] at (7,6) {$2$};
\node [right] at (7,5) {$1$};
\node [right] at (7,4) {$3$};
\node[below, gray] at (6.5,4){$2$};

\draw [thick, ->](8,4) --(8,5);
\draw [thick,->] (8,5)--(8,6);
\draw[ thick,<-] (9,4) --(9,5);
\draw[ thick,->] (9,5)--(9,6);
\draw [ thick,->]  (8,4) --(9,4);
\draw [ thick,->]  (9,5) --(8,5);
\draw [ thick,<-]  (8,6) -- (9,6);
\node [left] at (8,6) {$1$};
\node [left] at (8,5) {$2$};
\node [left] at (8,4) {$3$};
\node [right] at (9,6) {$2$};
\node [right] at (9,5) {$3$};
\node [right] at (9,4) {$1$};
\node[below, gray] at (8.5,4){$1$};

\draw [thick, ->](10,4) --(10,5);
\draw [thick,->] (10,5)--(10,6);
\draw[ thick,->](11,4) --(11,5);
\draw[ thick,<-] (11,5)--(11,6);
\draw [ thick,->]  (10,4) --(11,4);
\draw [ thick,<-]  (11,5) --(10,5);
\draw [ thick,<-]  (10,6) -- (11,6);
\node [left] at (10,6) {$1$};
\node [left] at (10,5) {$2$};
\node [left] at (10,4) {$3$};
\node [right] at (11,6) {$3$};
\node [right] at (11,5) {$1$};
\node [right] at (11,4) {$2$};
\node[below, gray] at (10.5,4){$1$};

\draw [thick, ->](12,4) --(12,5);
\draw [thick,->] (12,5)--(12,6);
\draw[ thick,<-](13,4) --(13,5);
\draw[ thick,<-] (13,5) --(13,6);
\draw [ thick,->]  (12,4) --(13,4);
\draw [ thick,dashed]  (13,5) --(12,5);
\draw [ thick,<-]  (12,6) -- (13,6);
\node [left] at (12,6) {$1$};
\node [left] at (12,5) {$2$};
\node [left] at (12,4) {$3$};
\node [right] at (13,6) {$3$};
\node [right] at (13,5) {$2$};
\node [right] at (13,4) {$1$};
\node[below, gray] at (12.5,4){$2$};

\draw [thick, ->](14,4) --(14,5);
\draw [thick,->] (14,5)--(14,6);
\draw[ thick,<-](15,4) --(15,5);
\draw[ thick,dashed](15,5) --(15,6);
\draw [ thick,->]  (14,4) --(15,4);
\draw [ thick,dashed]  (15,5) --(14,5);
\draw [ thick,<-]  (14,6) -- (15,6);
\node [left] at (14,6) {$1$};
\node [left] at (14,5) {$2$};
\node [left] at (14,4) {$3$};
\node [right] at (15,6) {$2$};
\node [right] at (15,5) {$2$};
\node [right] at (15,4) {$1$};
\node[below, gray] at (14.5,4){$4$};

\draw [thick, ->](16,4) --(16,5);
\draw [thick,->] (16,5)--(16,6);
\draw[ thick,->](17,4) --(17,5);
\draw[ thick,<-] (17,5)--(17,6);
\draw [ thick,->]  (16,4) --(17,4);
\draw [ thick,<-]  (17,5) --(16,5);
\draw [ thick,<-]  (16,6) -- (17,6);
\node [left] at (16,6) {$1$};
\node [left] at (16,5) {$2$};
\node [left] at (16,4) {$3$};
\node [right] at (17,6) {$2$};
\node [right] at (17,5) {$1$};
\node [right] at (17,4) {$2$};
\node[below, gray] at (16.5,4){$1$};

\draw [thick, ->](0,8) --(0,9);
\draw [thick,->] (0,9)--(0,10);
\draw[ thick,dashed](1,8) --(1,9)--(1,10);
\draw [ thick,->]  (0,8) --(1,8);
\draw [ thick,<-]  (1,9) --(0,9);
\draw [ thick,dashed]  (0,10) -- (1,10);
\node [left] at (0,10) {$1$};
\node [left] at (0,9) {$2$};
\node [left] at (0,8) {$3$};
\node [right] at (1,10) {$1$};
\node [right] at (1,9) {$1$};
\node [right] at (1,8) {$1$};
\node[below, gray] at(.5,8) {$8$};

\draw [thick, ->](2,8) --(2,9);
\draw [thick,->] (2,9)--(2,10);
\draw[ thick,dashed](3,8) --(3,9);
\draw [thick,<-](3,9) --(3,10);
\draw [ thick,->]  (2,8) --(3,8);
\draw [ thick,->]  (3,9) --(2,9);
\draw [ thick,<-]  (2,10) -- (3,10);
\node [left] at (2,10) {$1$};
\node [left] at (2,9) {$2$};
\node [left] at (2,8) {$3$};
\node [right] at (3,10) {$2$};
\node [right] at (3,9) {$1$};
\node [right] at (3,8) {$1$};
\node[below, gray] at (2.5,8){$2$};

\draw [thick, ->](4,8) --(4,9);
\draw [thick,->] (4,9)--(4,10);
\draw[ thick,<-](5,8) --(5,9);
\draw[thick,->](5,9)--(5,10);
\draw [ thick,->]  (4,8) --(5,8);
\draw [ thick,dashed]  (5,9) --(4,9);
\draw [ thick,dashed]  (4,10) -- (5,10);
\node [left] at (4,10) {$1$};
\node [left] at (4,9) {$2$};
\node [left] at (4,8) {$3$};
\node [right] at (5,10) {$1$};
\node [right] at (5,9) {$2$};
\node [right] at (5,8) {$1$};
\node[below, gray] at (4.5,8){$4$};

\draw [thick, ->](6,8) --(6,9);
\draw [thick,->] (6,9)--(6,10);
\draw[ thick,->](7,8) --(7,9);
\draw[thick,dashed]  (7,9) --(7,10);
\draw [ thick,->]  (6,8) --(7,8);
\draw [ thick,<-]  (7,9) --(6,9);
\draw [ thick,dashed]  (6,10) -- (7,10);
\node [left] at (6,10) {$1$};
\node [left] at (6,9) {$2$};
\node [left] at (6,8) {$3$};
\node [right] at (7,10) {$1$};
\node [right] at (7,9) {$1$};
\node [right] at (7,8) {$2$};
\node[below, gray] at (6.5,8){$4$};

\draw [thick, ->](8,8) --(8,9);
\draw [thick,->] (8,9)--(8,10);
\draw[ thick,dashed](9,8) --(9,9);
\draw[ thick,<-] (9,9)--(9,10);
\draw [ thick,->]  (8,8) --(9,8);
\draw [ thick,->]  (9,9) --(8,9);
\draw [ thick,<-]  (8,10) -- (9,10);
\node [left] at (8,10) {$1$};
\node [left] at (8,9) {$2$};
\node [left] at (8,8) {$3$};
\node [right] at (9,10) {$3$};
\node [right] at (9,9) {$1$};
\node [right] at (9,8) {$1$};
\node[below, gray] at (8.5,8){$2$};

\draw [thick, ->](10,8) --(10,9);
\draw [thick,->] (10,9)--(10,10);
\draw[ thick,<-](11,8) --(11,9);
\draw[ thick,->] (11,9)--(11,10);
\draw [ thick,->]  (10,8) --(11,8);
\draw [ thick,->]  (11,9) --(10,9);
\draw [ thick,dashed]  (10,10) -- (11,10);
\node [left] at (10,10) {$1$};
\node [left] at (10,9) {$2$};
\node [left] at (10,8) {$3$};
\node [right] at (11,10) {$1$};
\node [right] at (11,9) {$3$};
\node [right] at (11,8) {$1$};
\node[below, gray] at (10.5,8){$2$};

\draw [thick, ->](12,8) --(12,9);
\draw [thick,->] (12,9)--(12,10);
\draw[ thick,->](13,8) --(13,9);
\draw[ thick,dashed]  (13,9)--(13,10);
\draw [ thick,dashed]  (12,8) --(13,8);
\draw [ thick,<-]  (13,9) --(12,9);
\draw [ thick,dashed]  (12,10) -- (13,10);
\node [left] at (12,10) {$1$};
\node [left] at (12,9) {$2$};
\node [left] at (12,8) {$3$};
\node [right] at (13,10) {$1$};
\node [right] at (13,9) {$1$};
\node [right] at (13,8) {$3$};
\node[below, gray] at (12.5,8){$8$};

\draw [thick, ->](14,8) --(14,9);
\draw [thick,->] (14,9)--(14,10);
\draw[ thick,->](15,8) --(15,9);
\draw[ thick,dashed] (15,9)--(15,10);
\draw [ thick,dashed]  (14,8) --(15,8);
\draw [ thick,dashed]  (15,9) --(14,9);
\draw [ thick,<-]  (14,10) -- (15,10);
\node [left] at (14,10) {$1$};
\node [left] at (14,9) {$2$};
\node [left] at (14,8) {$3$};
\node [right] at (15,10) {$2$};
\node [right] at (15,9) {$2$};
\node [right] at (15,8) {$3$};
\node[below, gray] at (14.5,8){$8$};

\draw [thick, ->](16,8) --(16,9);
\draw [thick,->] (16,9)--(16,10);
\draw[ thick,<-](17,8) --(17,9);
\draw[ thick,->] (17,9)--(17,10);
\draw [ thick,->]  (16,8) --(17,8);
\draw [ thick,->]  (17,9) --(16,9);
\draw [ thick,<-]  (16,10) -- (17,10);
\node [left] at (16,10) {$1$};
\node [left] at (16,9) {$2$};
\node [left] at (16,8) {$3$};
\node [right] at (17,10) {$2$};
\node [right] at (17,9) {$3$};
\node [right] at (17,8) {$2$};
\node[below, gray] at (16.5,8){$1$};

\end{tikzpicture}
\caption{All $(1,2,3)$-restricted, compatible pairs of acyclic orientations and (not necessary proper) colorings of $P_3 \times P_2$.}\label{fig:restricted reciprocity}
\end{figure}
\end{example}


If however one is interested in the asymptotic behavior of graphs $G \times C_n$, then we need to find good bounds for the biggest eigenvalue of $L$, as this dominates the asymptotics. The next theorem gives an upper and a lower bound and in particular it shows that the eigenvalue grows like a polynomial of degree $\# V(G)$.
\begin{lemma}
\label{UpperLowerBounds}
Let $\lambda_{\max}$ be the biggest eigenvalue of $L$ (and thus of the adjacency matrix $A_{M_G}$). Then 
\begin{equation}
\label{eq:UpperLowerBounds}
\delta(L) \leq \lambda_{\max} \leq \Delta(L) \text{,}
\end{equation}
where $\delta(L)$ and $\Delta(L)$ are the smallest and biggest row sum of $L$, respectively.
\end{lemma}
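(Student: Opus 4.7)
The plan is to recognize this as a standard spectral-radius estimate for nonnegative matrices, and apply it to $L$. By Theorem~\ref{thm:BasicGeometricFacts}, for $k \geq N$ every entry $L_{o_i,o_j}(k)$ is a count of proper colorings, hence a nonnegative integer. By the Perron--Frobenius theorem applied to the nonnegative matrix $L$, the spectral radius $\rho(L)$ equals the largest eigenvalue $\lambda_{\max}$, which is a nonnegative real number. So it suffices to show $\delta(L)\leq \rho(L)\leq \Delta(L)$.

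For the upper bound, I would use the classical inequality $\rho(L)\leq \|L\|_\infty$, where $\|\cdot\|_\infty$ denotes the operator norm induced by the vector $\infty$-norm. Explicitly, $\|L\|_\infty=\max_i \sum_j L_{i,j}=\Delta(L)$. If $\mathbf{v}\neq 0$ is an eigenvector of $L$ with eigenvalue $\lambda$ satisfying $|\lambda|=\rho(L)$, then from $L\mathbf{v}=\lambda\mathbf{v}$ one obtains $|\lambda|\,\|\mathbf{v}\|_\infty = \|L\mathbf{v}\|_\infty \leq \|L\|_\infty\,\|\mathbf{v}\|_\infty$, so $\rho(L)\leq \Delta(L)$.

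For the lower bound, I would use the componentwise inequality $L\mathbf{1}\geq \delta(L)\,\mathbf{1}$ (which holds because every row sum of $L$ is at least $\delta(L)$) together with the nonnegativity of $L$. Iterating gives
\[
L^n\mathbf{1} \;=\; L^{n-1}(L\mathbf{1}) \;\geq\; \delta(L)\,L^{n-1}\mathbf{1} \;\geq\; \cdots \;\geq\; \delta(L)^n\,\mathbf{1},
\]
so $\|L^n\|_\infty = \max_i (L^n\mathbf{1})_i \geq \delta(L)^n$. Combining this with Gelfand's formula $\rho(L)=\lim_{n\to\infty}\|L^n\|_\infty^{1/n}$ gives $\rho(L)\geq \delta(L)$.

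There is no serious obstacle here: both inequalities are standard consequences of the theory of nonnegative matrices, and the only ingredient specific to our setting is the nonnegativity of $L$ supplied by Theorem~\ref{thm:BasicGeometricFacts}. The more substantive content, namely that the top two coefficients of the row-sum polynomials agree (so that the bounds pin down the leading asymptotics of $\lambda_{\max}$ in $k$), is the additional information asserted in Proposition~\ref{prop:AsymptoticsCn} and not in the lemma itself.
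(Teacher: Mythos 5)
Your argument is correct, but it takes a different route from the paper. The paper's proof is a one-liner: it cites a theorem (from the reference labeled MR1077731) that the largest eigenvalue of the adjacency matrix of a graph is sandwiched between the minimum and maximum degree, applies it to $A_{M_G}$ (whose top eigenvalue equals that of $L$ by Corollary~\ref{cor:sameeigenvalue}), and observes that the degrees of vertices of $M_G$ are exactly the row sums of $L$ since all vertices in a given orbit have the same degree. You instead work directly with $L$ as an abstract nonnegative matrix: the upper bound $\rho(L)\le\|L\|_\infty=\Delta(L)$ is the standard operator-norm estimate, and the lower bound comes from iterating $L\mathbf{1}\ge\delta(L)\mathbf{1}$ and invoking Gelfand's formula. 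Both arguments are valid. Your version is more self-contained and does not need the detour through the (symmetric) adjacency matrix $A_{M_G}$ or the cited theorem, which is convenient because $L$ itself is generally not symmetric; the paper's version is shorter modulo the citation. A small streamlining of your lower bound: rather than Gelfand, one can use the Collatz--Wielandt characterization directly, namely that for a nonnegative matrix $L$, if $L\mathbf{x}\ge c\,\mathbf{x}$ componentwise for some nonzero nonnegative $\mathbf{x}$ then $\rho(L)\ge c$; taking $\mathbf{x}=\mathbf{1}$ gives $\rho(L)\ge\delta(L)$ without any limiting argument.
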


\begin{proof}
The biggest eigenvalue of the adjacency matrix of a graph is bounded above and below by the biggest and smallest degree of the graph, respectively, see \cite[Thm. 1]{MR1077731}. These are exactly the biggest and smallest row sums of $L$.
\end{proof}

Now the question of determining the biggest eigenvalue reduces to determining the smallest and biggest row sum of $L$. This might be computationally challenging. Our next result gives a combinatorial interpretation of the two highest coefficients, which gives us a quick way to obtain the two highest coefficients without computing $L$. 
\begin{lemma}
\label{cor:FastBound}
Let $\tilde{o}_1, \tilde{o}_2,\dots, \tilde{o}_p$ be orbits as defined in Definition \ref{def:orbits}. Let $\tilde{o}_p$ be the orbit using $N = \# V(G)$ colors. Then $\delta (L)$ and $\Delta (L)$ are polynomials of degree $N$,  their leading coefficient is $a_N = 1$, and 
\[
a_{N-1} = - F + \# \text{orbits using } N-1 \text{ colors,}
\] 
where 
\[
F = \# \text{edges of } G \times P_2 \text{that are not edges of } G \times \{ 1\}.
\]
In particular, the highest two terms of both polynomials agree.
\end{lemma}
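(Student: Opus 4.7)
The plan is to identify each row sum of $L$ with a restricted chromatic polynomial of $G \times P_2$ and then invoke Theorem~\ref{thm:restrictedReciprocity}.

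First, by Definition~\ref{def:DefinitionL} and Remark~\ref{rem:CombInterpretation}, the entry $L_{i,j}(k)$ counts the proper $k$-colorings of $G \times P_2$ whose restriction to $G \times \{1\}$ equals a fixed representative $c_i'$ of $o_i$ and whose restriction to $G \times \{2\}$ lies in $o_j$. Summing over $j$ removes the constraint on the second layer and yields
\[
\sum_{j=1}^{p} L_{i,j}(k) \;=\; \chi_{c_i',\, G \times P_2}(k),
\]
the restricted chromatic polynomial with $V' = V(G \times \{1\})$. Applying Theorem~\ref{thm:restrictedReciprocity} with $\Gamma = G \times P_2$ and this $V'$ immediately gives a polynomial of degree $s = 2N - N = N$ and leading coefficient $1$. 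Since neither the degree nor the leading coefficient depends on the row index $i$, it follows that both $\delta(L)$ and $\Delta(L)$ are polynomials of degree $N$ with $a_N = 1$.

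For the identification of $a_{N-1}$ with $-F + \#\text{orbits using }N-1\text{ colors}$, I would decompose the row sum using the geometric description in Theorem~\ref{thm:BasicGeometricFacts}(1): each $L_{i,j}(k)$ is a polynomial of degree equal to the number of colors used in $o_j$. Consequently only $o_p$ (which uses all $N$ colors) contributes to $k^N$, while $o_p$ together with the orbits $o_j$ using exactly $N-1$ colors contribute to $k^{N-1}$. The coefficient of $k^{N-1}$ therefore splits as
\[
a_{N-1} \;=\; a_{N-1}\bigl(L_{i,p}\bigr) \;+\; \sum_{o_j \text{ uses } N-1 \text{ colors}} \bigl(\text{leading coefficient of } L_{i,j}\bigr).
\]
Applying Theorem~\ref{thm:restrictedReciprocity} at the level of $L_{i,p}$, the second-highest coefficient is controlled by the characteristic polynomial of the induced hyperplane arrangement, whose codimension-one flats correspond to the edges of $G \times P_2$ not contained in $V(G \times \{1\})$; these are the $|E(G)|$ edges inside the second layer plus the $N$ cross-edges, giving $F = |E(G)| + N$ and hence $a_{N-1}(L_{i,p}) = -F$. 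Showing that each degree-$(N-1)$ summand $L_{i,j}$ contributes a leading coefficient of $1$ then produces exactly one copy of $+1$ per orbit using $N-1$ colors, yielding the stated formula. Because none of these contributions depends on $i$, both $\delta(L)$ and $\Delta(L)$ share the same $a_{N-1}$, which proves the final ``highest two terms agree'' assertion.

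The main obstacle I anticipate is justifying the leading-coefficient claim for $L_{i,j}$ when $o_j$ uses $N-1$ colors: this rests on showing that the relevant lattice inside-out polytope cut out by the orbit $o_j$ and the restriction $c_i'$ has unit normalized volume. This requires unpacking the construction of the inside-out polytope from the proof of Theorem~\ref{thm:BasicGeometricFacts}(1), identifying the unique doubleton independent set of $o_j$, and verifying that the corresponding equality-hyperplanes cut $[0,1]^{2N}$ into regions of total normalized volume one. Once this is in hand, the rest is straightforward degree-by-degree bookkeeping.
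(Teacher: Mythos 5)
Your first step is exactly the paper's intended argument: the row sum equals $\chi_{c_i',\,G\times P_2}(k)$, the restricted chromatic polynomial with $\Gamma = G\times P_2$ and $V' = V(G\times\{1\})$, and Theorem~\ref{thm:restrictedReciprocity} supplies the degree $N$ and leading coefficient $1$ directly. But that same theorem also gives the second coefficient with no further work: $a_{N-1} = -\#\{\text{edges of }\Gamma\text{ not inside }V'\} = -F$. You should have stopped there.

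Your attempt to instead recover the printed formula $a_{N-1} = -F + \#\text{orbits using }N-1\text{ colors}$ via the entrywise decomposition fails at the step $a_{N-1}(L_{i,p}) = -F$. The entry $L_{i,p}$ requires $G\times\{2\}$ to lie in $o_p$ (all colors distinct), so the relevant inside-out polytope from the proof of Theorem~\ref{thm:BasicGeometricFacts} carries the additional forbidden hyperplanes $\mathcal{H}_{\tilde{o}_p, I}$. After fixing $G\times\{1\}$, the codimension-one flats are the $\binom{N}{2}$ distinctness constraints on $G\times\{2\}$ together with the $N$ cross-edges, so $a_{N-1}(L_{i,p}) = -\left(\binom{N}{2}+N\right)$, not $-F$. (Check against Example~\ref{ex:CompactifiedTMM2}: $L_{1,2} = k^3 - 6k^2 + 13k - 10$, whose second coefficient is $-6 = -\binom{3}{2}-3$, whereas $F=5$.) Since each $\tilde{o}$-orbit using $N-1$ colors corresponds to choosing one non-edge of $G$ as the unique doubleton block, there are $\binom{N}{2}-|E(G)|$ of them, and carrying your decomposition through correctly gives
\[
a_{N-1} = -\left(\tbinom{N}{2}+N\right) + \left(\tbinom{N}{2}-|E(G)|\right) = -|E(G)| - N = -F,
\]
consistent with the one-step computation. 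This exposes that the formula displayed in the lemma contradicts Example~\ref{ex:CompactifiedTMM2}: the row sums there have $a_2 = -5 = -F$, while the printed expression would give $-5+1 = -4$. Either the ``$+\,\#\text{orbits}$'' term is spurious, or $F$ was meant to denote the count $\binom{N}{2}+N$ of hyperplanes in the $o_p$-arrangement rather than the number of edges it is defined to count. In any case your second derivation carries a compensating error that happens to reproduce the problematic printed formula; the direct application of Theorem~\ref{thm:restrictedReciprocity} to the row sum, which is what the paper's proof invokes, yields $a_{N-1}=-F$.
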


\begin{proof}
This directly follows from Theorem \ref{thm:restrictedReciprocity}.
\end{proof}
To summarize, we get:
\begin{proposition}
\label{prop:AsymptoticsCn}
Let $G$ be a graph and $N = \# V(G)$ and let $\delta (L)$ and $\Delta (L)$ be as above. Then the doubly asymptotic behavior of the number of proper $k$-colorings of $G \times C_n$ is dominated by $\lambda_{\max}^{n-1}$ and
\[
\delta(L) \leq \lambda_{\max} \leq \Delta(L),
\]
where $\delta(L)  = \sum_{i=0}^N a_i k^i $, $\Delta(L)=\sum_{i=0}^N b_i k^i $, $a_N = b_N$, and $a_{N-1} = b_{N-1}$.
\end{proposition}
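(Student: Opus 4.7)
The proposition consolidates three ingredients that were essentially established earlier, so my plan is to identify the three pieces and explain how they combine. The statement has a combinatorial claim (asymptotic dominance by $\lambda_{\max}^{n-1}$), an algebraic claim (the sandwich bound on $\lambda_{\max}$), and a quantitative claim (the two top coefficients of $\delta(L)$ and $\Delta(L)$ agree).

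First, I would recall the bijection from Section~\ref{sec:TMM} between proper $k$-colorings of $G \times C_n$ and closed walks of the appropriate length in $M_G$, so that the number of such colorings equals a trace of a power of $A_{M_G}$. Expanding this trace as $\sum_i \lambda_i^m$, the Perron--Frobenius theorem (applicable because $A_{M_G}$ is a nonnegative matrix and, for $k \geq N$, the graph $M_G$ is connected) tells us that the sum is asymptotically a positive multiple of $\lambda_{\max}^{n-1}$ as $n \to \infty$. By Corollary~\ref{cor:sameeigenvalue}, this $\lambda_{\max}$ is also the largest eigenvalue of the compactified matrix $L$, which is the object we want to analyze.

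Second, the two-sided bound $\delta(L) \leq \lambda_{\max} \leq \Delta(L)$ is literally the content of Lemma~\ref{UpperLowerBounds}, which in turn derives from the classical bound of the largest eigenvalue of a nonnegative matrix by its extremal row sums. No additional work is needed for this step. Third, Lemma~\ref{cor:FastBound} already tells us that $\delta(L)$ and $\Delta(L)$ are polynomials of degree $N$ sharing their leading coefficient $a_N=b_N=1$ and their next coefficient $a_{N-1}=b_{N-1}$, so the statement about agreement of the top two coefficients is immediate. The doubly asymptotic claim then follows by combining the two sandwiches: for fixed $k$ the growth in $n$ is governed by $\lambda_{\max}^{n-1}$, and as $k$ grows $\lambda_{\max}(k)$ itself is pinned between two polynomials in $k$ whose two leading terms coincide, so the doubly asymptotic rate $\lambda_{\max}(k)^{n-1}$ is controlled with quantitative precision.

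The only subtle point, and the one that deserves the most care, is the Perron--Frobenius step: to get an honest asymptotic (not merely a bound) of the form $\mathrm{tr}(A_{M_G}^n) \sim c(k)\,\lambda_{\max}^{n-1}$, one should verify that $\lambda_{\max}$ is \emph{strictly} dominant, i.e.\ strictly larger in absolute value than any other eigenvalue. This holds once $M_G$ is connected and aperiodic, which is the case for $k \geq N$ (in fact often for smaller $k$), and so the argument goes through. Otherwise the proposition is essentially bookkeeping: it packages Lemmas~\ref{UpperLowerBounds} and~\ref{cor:FastBound} together with the basic transfer-matrix asymptotics into a single statement about $G \times C_n$.
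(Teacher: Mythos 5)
Your proof follows exactly the route the paper intends: the proposition is stated without proof precisely because it is a summary of Lemma~\ref{UpperLowerBounds} (the row-sum sandwich for $\lambda_{\max}$), Lemma~\ref{cor:FastBound} (agreement of the top two coefficients), Corollary~\ref{cor:sameeigenvalue} (identifying $\lambda_{\max}$ of $L$ with that of $A_{M_G}$), and the Remark after \cite[Thm. 4.7.1]{Stanley12} that the number of closed walks is $\operatorname{tr}(A_{M_G}^n)$, dominated by $\lambda_{\max}^n$. Your assembly of these ingredients is correct and matches the paper's intent.

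One caveat on the "subtle point" you raise: your assertion that $M_G$ is connected whenever $k \ge N$ is not true in general. For $G = K_3$ and $k = 3$, the graph $M_G$ is the Cayley graph of $S_3$ with connection set the two $3$-cycles, which is a disjoint union of two triangles. This does not harm the proposition, because the claim is only that the count is \emph{dominated} by $\lambda_{\max}^{n-1}$, and Perron--Frobenius for (possibly reducible) nonnegative matrices already gives $|\lambda_i| \le \lambda_{\max}$ for every eigenvalue, so $\operatorname{tr}(A_{M_G}^n) = O(\lambda_{\max}^n)$ holds unconditionally. But you should not claim an honest asymptotic equivalence $\operatorname{tr}(A_{M_G}^n) \sim c\,\lambda_{\max}^{n-1}$ on the grounds of connectivity and aperiodicity of $M_G$ without verifying those properties; the proposition's "dominated by" is the safe formulation and is all the cited lemmas deliver.
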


\subsection{Results on Orbit Counting}
As we have seen in the previous section, we should exploit the symmetry of our problem. However, what we did not address so far is how many orbits we can expect. This subsection only deals with finding the explicit number of orbits after quotienting by permutations of colors. We identify two colorings $c_1$ and $c_2$ if there is a permutation $\pi$ of colors such that $\pi (c_1) = c_2$. We want to count the number of orbits of $G$ after taking the quotient by permutations of colors.

Let 
\[ 
F_k (G) := \# \text{ orbits of $k$-colorings of G} 
\]
be the function that counts the number of orbits. Note that $F_{k} (G) = F_{k+l}(G)$ if $k \geq |V|$ and $l \in \mathbb{Z}_{\geq 0}$. Therefore, we can define $F(G): = F_k (G)$ for a $k\geq |V|$. We didn't find the following lemma in the literature, but it is quite possibly known.

\begin{lemma}[Deletion-Contraction for orbits]
\label{lem:DelCon}
Let $e \in E$ be given and let $G/e$ denote the contraction of $G$ along the edge $e$. Then
\begin{equation}
\label{eq:DelCon}
F(G - e) = F(G) + F(G/e).
\end{equation}
\end{lemma}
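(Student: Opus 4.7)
The plan is to reinterpret $F(G)$ combinatorially and then exhibit an explicit bijection. For $k \geq |V|$ the orbits of $k$-colorings of $G$ under the $\mathfrak{S}_k$-action are in bijection with partitions of $V(G)$ into (non-empty) independent sets of $G$, since an orbit is recorded precisely by which vertices share a color, and the color classes of a proper coloring are independent. So the task is to show that, writing $e = \{u,v\}$, the number of partitions of $V(G)$ into $(G-e)$-independent sets equals the number of partitions of $V(G)$ into $G$-independent sets plus the number of partitions of $V(G/e)$ into $(G/e)$-independent sets.

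First, I would split the $(G-e)$-partitions $\mathcal{P}$ of $V(G)$ into two classes according to whether $u$ and $v$ lie in the same block of $\mathcal{P}$ or in different blocks. If they are in different blocks, then every block of $\mathcal{P}$ avoids the edge $e$ automatically, so $\mathcal{P}$ is also a partition of $V(G)$ into $G$-independent sets, and conversely every $G$-partition is in particular a $(G-e)$-partition with $u,v$ separated. This gives the summand $F(G)$.

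Next, for the class where $u$ and $v$ share a block $B$, I would define the map $\mathcal{P} \mapsto \overline{\mathcal{P}}$ which replaces $B$ by $(B\setminus\{u,v\})\cup\{[uv]\}$ and leaves the remaining blocks unchanged, where $[uv]$ is the merged vertex of $G/e$. The block containing $[uv]$ is independent in $G/e$ because no $w \in B\setminus\{u,v\}$ is adjacent to $u$ or $v$ in $G-e$ (by independence of $B$), and since $G$ and $G-e$ only differ in the edge $e$, the same holds in $G$; thus $[uv]$ has no neighbor in its block in $G/e$. The remaining blocks retain independence since their edge-relations in $G/e$ coincide with those in $G-e$. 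The inverse map splits $[uv]$ back into $u$ and $v$ and puts both into the corresponding block; the check that independence in $G-e$ is preserved is the mirror of the argument just given. This gives the summand $F(G/e)$.

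The main thing to verify carefully is the independence bookkeeping under contraction, in particular that no spurious edge appears in $G/e$ that would violate independence of the block containing $[uv]$, and dually that no edge of $G-e$ is missed when pulling back. Once this routine check is made, summing the two classes yields $F(G-e) = F(G) + F(G/e)$, which is the claimed deletion-contraction identity.
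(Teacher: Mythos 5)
Your proof is correct and takes essentially the same approach as the paper: both rest on the standard deletion-contraction bijection for colorings of $G-e$ split by whether $u$ and $v$ receive the same color. The paper phrases this as the classical bijection between colorings descending to orbits, whereas you work directly with the equivalent reformulation of orbits as partitions into independent sets (which the paper itself records in the remark following Definition~\ref{def:orbits}); your framing is arguably the cleaner way to carry out the same argument.
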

\begin{proof}
Let $k\geq |V|$. This proof is analogous to the proof of deletion-contraction for the chromatic polynomial. This proof establishes a bijection $b$ between the sets
\begin{equation}
\label{eq:bijection}
\stackrel{A:=} {\overbrace{\left\{ k \text{-colorings of }G -e \right\}}} \stackrel{b}{\longleftrightarrow} \stackrel{B:=}{\overbrace{\left\{ k \text{-colorings of }G\right\}}} \sqcup \stackrel{C:=}{\overbrace{\left\{ k \text{-colorings of }G/e \right\}}} \text{,}
\end{equation}
where clearly $B \cap C = \emptyset$. To prove $(\ref{eq:DelCon})$, we need to show that if $c_1$, $c_2 \in A$ are in the same orbit, then the corresponding colorings $c'_1$, $c'_2 \in B \cup C$ are also in the same orbit, and vice versa.

Let $u$ and $v$ be the vertices connected by the edge $e$ and let $c_1$, $c_2 \in A$ be two colorings in the same orbit. Then there are two cases:
\begin{enumerate}
\item If $c_i(u) \neq c_i(v)$ where $i \in \{1,2\}$, then $b(c_i) \in B$, so the associated coloring is a proper coloring of $G$. But this also means that $\pi(b(c_1)) = b(c_2)$, since the bijection does not change the coloring.
\item If $c_i(u) = c_i(v)$, then $b(c_i) \in C$, so the associated coloring is a proper coloring of $G/e$. This again directly implies that $\pi(b(c_1)) = b(c_2)$, since $b$ does not change the coloring of $u$ and $v$.
\end{enumerate}

The other direction follows analogously.

\end{proof}
\begin{remark}
Since we do not use graph symmetry, the following results give a crude upper bound for the dimension of the matrix $L_G$.
\end{remark}
Since we can now use deletion-contraction to count the number of orbits, we can successively remove all edges. This means that the number of orbits of a graph without edges is of special interest. As it turns out, the number of orbits of these graphs are counted by the Bell numbers $\mathcal{B}_n$, where \[\mathcal{B}_n := \# \text{number of partitions of a set of size }n.\]
\begin{lemma}
Let $G_n = ([n], \emptyset)$ be the graph on $n$ nodes without edges. Then
\begin{equation}
F(G_n) = \mathcal{B}_n \text{, }
\end{equation}
 
where $\mathcal{B}_n$ is the $n^{\text{th}}$ Bell number.
\end{lemma}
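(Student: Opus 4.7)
The plan is to establish a natural bijection between orbits of $k$-colorings of $G_n$ (under color permutation) and set partitions of $[n]$, which by definition are counted by the Bell number $\mathcal{B}_n$.

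First, I would observe that since $G_n$ has no edges, every function $c\colon [n]\to[k]$ is a proper $k$-coloring, so the set of $k$-colorings is simply $[k]^{[n]}$. To each such coloring $c$, I associate the \emph{color partition} $\pi(c)$ of $[n]$ whose blocks are the nonempty fibers $c^{-1}(i)$ for $i\in[k]$. This map $c\mapsto\pi(c)$ is well-defined and lands in the set of set partitions of $[n]$.

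Next, I would verify that $\pi(c_1)=\pi(c_2)$ if and only if $c_1$ and $c_2$ lie in the same $\mathfrak{S}_k$-orbit. The forward direction is the main content: if two colorings induce the same partition, one can define a bijection between their used color sets sending each block to its common color, and because $k\geq n$ there are enough unused colors to extend this bijection to a full permutation of $[k]$. The reverse direction is immediate, since permuting colors does not alter which vertices share a color. Hence $\pi$ descends to an injection from orbits to partitions of $[n]$.

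Finally, I would show surjectivity: given any partition $P$ of $[n]$ into blocks $B_1,\ldots,B_m$ (with $m\leq n\leq k$), the coloring assigning color $i$ to every vertex in $B_i$ has $\pi(c)=P$. Combining the three steps gives a bijection between orbits and set partitions of $[n]$, so $F(G_n)=\mathcal{B}_n$. The only subtle point is the need for $k\geq n$ to guarantee enough colors to realize the permutation extending the block-to-color correspondence, but this is precisely the assumption under which $F(G)$ was defined, so there is no real obstacle.
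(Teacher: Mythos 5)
Your proof is correct and takes essentially the same approach as the paper: both identify orbits of colorings of the edgeless graph with set partitions of $[n]$, counted by $\mathcal{B}_n$. You simply spell out the bijection in more detail than the paper's two-sentence argument.
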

\begin{proof}
Since we do not take graph symmetries into account, a coloring of a graph without edges corresponds to a partition of a set with $n$ \emph{labeled} elements, and every such set partition corresponds to a coloring. These set partitions are counted by the Bell numbers.
\end{proof}
Bell numbers do not only appear in counting problems, but $\mathcal{B}_n$ also appears as the $n^{\text{th}}$ moment in the \emph{Poisson distribution} with mean $1$. We want to finish this section by giving some small applications of deletion-contraction to count the number of orbits of path graphs and cycle graphs.
\begin{lemma}
\label{Pathorbits}
Let $P_n$ be the path graph on $n$ nodes. The number of orbits $F(P_n)$ is 
\begin{equation}
\label{eq:pathorbits}
F(P_n) = \mathcal{B}_{n-1} \text{.}
\end{equation}
\end{lemma}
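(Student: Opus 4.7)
My plan is to give a direct bijective proof. An orbit of $P_n$ (for $k\geq n$) is exactly a set partition of $V(P_n)=[n]$ whose blocks are independent sets of $P_n$, i.e.\ a set partition of $[n]$ in which no two consecutive integers share a block. Let $A_n$ denote this set of ``consecutive-free'' partitions; the lemma then reduces to showing $|A_n|=\mathcal{B}_{n-1}$.

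I would construct an explicit bijection $\Phi$ between arbitrary set partitions of $[n-1]$ and elements of $A_n$, using restricted growth functions (RGFs). A set partition of $[m]$ corresponds to an RGF $w=w_1\cdots w_m$ with $w_1=1$ and $w_{i+1}\leq 1+\max(w_1,\ldots,w_i)$, while elements of $A_n$ correspond to such RGFs of length $n$ satisfying the additional constraint $v_i\neq v_{i-1}$. Given $w$ of length $n-1$, I set $v_1:=1$ and, for $i\geq 2$,
\[
v_i := \begin{cases} w_{i-1} & \text{if } w_{i-1}\neq v_{i-1},\\ 1+\max(v_1,\ldots,v_{i-1}) & \text{if } w_{i-1}= v_{i-1}.\end{cases}
\]
In either branch $v_i\leq 1+\max(v_1,\ldots,v_{i-1})$, so $v$ is an RGF; and by construction $v_i\neq v_{i-1}$, so $v\in A_n$.

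For the inverse, at each position $i\geq 2$ I distinguish whether $v_i$ is a new record ($v_i=1+\max(v_1,\ldots,v_{i-1})$) or a repeat of an earlier value: in the former case set $w_{i-1}:=v_{i-1}$ (inverting the second branch), in the latter set $w_{i-1}:=v_i$ (inverting the first). Checking that the resulting $w$ is a valid RGF on $[n-1]$ is routine, since $v_{i-1}$ and $v_i$ are each at most the running maximum of $v$ restricted to $\{v_1,\ldots,v_{i-1}\}$. That $\Phi$ and this inverse compose to the identity on both sides is a short case analysis on the two branches.

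The main obstacle is ensuring the two branches of $\Phi$ are unambiguously detectable from $v$ alone, so that the inverse is well-defined. The key point is that the two branches produce $v_i$ in disjoint ranges: the first yields a value already present in $v_1,\ldots,v_{i-1}$ (but distinct from $v_{i-1}$), while the second yields a strictly new record value. Thus the cases never overlap and are distinguished simply by comparing $v_i$ to $\max(v_1,\ldots,v_{i-1})$. An alternative would be to iterate the deletion-contraction recursion of Lemma~\ref{lem:DelCon} on the edge $\{n-1,n\}$, giving $F(P_n)=F(P_{n-1}\sqcup K_1)-F(P_{n-1})$, but computing $F(P_{n-1}\sqcup K_1)$ in closed form requires essentially the same Bell-number identity, so the bijective route is more direct.
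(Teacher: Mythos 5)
Your reduction to counting set partitions of $[n]$ with no two consecutive integers in a block is correct, and identifying these with restricted growth functions $v_1\cdots v_n$ satisfying $v_i\neq v_{i-1}$ is fine. The problem is in the bijection $\Phi$ itself: the claim that the two branches of $\Phi$ land in ``disjoint ranges'' --- i.e.\ that branch 1 always produces a value already present among $v_1,\ldots,v_{i-1}$, while branch 2 always produces a new record --- is false. Branch 1 sets $v_i := w_{i-1}$, and nothing prevents $w_{i-1}$ from being a new record for $v$: this happens precisely when $w_{i-1}$ is a new maximum of $w$ and also happens to differ from $v_{i-1}$. In that case the inverse map, which decides the branch by comparing $v_i$ to $\max(v_1,\ldots,v_{i-1})$, misclassifies the step and recovers the wrong $w$.

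Concretely, take $n=5$. For $w=1123$ your rule gives $v_1=1$; $v_2=2$ (branch 2, since $w_1=v_1$); $v_3=1$ (branch 1, $w_2=1\neq 2$); $v_4=2$ (branch 1, $w_3=2\neq 1$); $v_5=3$ (branch 1, $w_4=3\neq 2$), so $\Phi(1123)=12123$. For $w'=1122$ one gets the same $v_1,\ldots,v_4$ and then $v_5=1+\max(1,2,1,2)=3$ by branch 2 (since $w'_4=2=v_4$), so $\Phi(1122)=12123$ as well. Thus $\Phi$ is not injective, and your candidate inverse applied to $12123$ returns only $1122$. The culprit is step $i=5$ for $w=1123$: $v_5=3$ is produced by branch 1 but is a new record, so the branches are \emph{not} distinguishable from $v$ alone. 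The needed invariant ``$\max(v_1,\ldots,v_{i-1})=1+\max(w_1,\ldots,w_{i-2})$'' breaks as soon as a new record of $w$ gets consumed by branch 1. A correct bijection exists (this is a classical result; the paper simply cites the $k=1$ case of Theorem 2 in Yang, 1996), but it requires a different encoding --- the naive ``copy $w_{i-1}$ unless it collides with $v_{i-1}$'' rule does not separate the cases cleanly.
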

\begin{proof}

This follows from the special case $k=1$ from Theorem 2, \cite{Yang96}.
\end{proof}

As a corollary, we obtain the following relation that we also didn't find in the literature.
\begin{corollary}
The Bell numbers satisfy the relation
\begin{equation}
\label{eq:rec}
\mathcal{B}_n = \sum_{i=1}^{n+1} \binom{n}{i-1}(-1)^{n+1-i}\mathcal{B}_i\text{.}
\end{equation}
\end{corollary}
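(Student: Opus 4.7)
The plan is to iterate the deletion-contraction relation of Lemma~\ref{lem:DelCon} across every edge of the path $P_{n+1}$ and then evaluate the terms using the earlier formulas for $F$ on paths (Lemma~\ref{Pathorbits}) and on edgeless graphs.

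First, I would rewrite Lemma~\ref{lem:DelCon} in the form $F(G) = F(G-e) - F(G/e)$, so that a single application produces a signed combination with weight $+1$ for deletion and $-1$ for contraction. Labeling the edges of $P_{n+1}$ as $e_1, \dots, e_n$ and applying this relation successively to $e_1, e_2, \dots, e_n$ yields the expansion
\begin{equation*}
F(P_{n+1}) \;=\; \sum_{S \subseteq \{e_1, \ldots, e_n\}} (-1)^{|S|}\, F(H_S),
\end{equation*}
where $H_S$ denotes the graph obtained from $P_{n+1}$ by contracting every edge in $S$ and deleting every edge in the complement.

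Next, I would identify $H_S$ explicitly. Contracting a subset of path edges only fuses consecutive vertices, reducing the vertex count by exactly $|S|$; the subsequent deletions then strip away all remaining edges. Hence $H_S$ is the edgeless graph on $n+1-|S|$ vertices, and the earlier Bell-number lemma on edgeless graphs gives $F(H_S) = \mathcal{B}_{n+1-|S|}$. Grouping subsets by their cardinality $k$, of which there are $\binom{n}{k}$, and invoking $F(P_{n+1}) = \mathcal{B}_n$ from Lemma~\ref{Pathorbits}, I obtain
\begin{equation*}
\mathcal{B}_n \;=\; \sum_{k=0}^n \binom{n}{k} (-1)^k \mathcal{B}_{n+1-k}.
\end{equation*}
The substitution $i = n+1-k$, together with the identity $\binom{n}{n+1-i} = \binom{n}{i-1}$, rewrites this as~(\ref{eq:rec}).

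The only step requiring genuine care is the well-definedness of the above expansion: one must check that $H_S$ depends on $S$ alone and not on the order in which the edges are processed. This reduces to verifying that deletion of one edge and contraction of another commute, which is immediate for disjoint edges and a one-line check for two adjacent edges of the path. With that bookkeeping done, the whole argument amounts to a binomial inversion of the standard Bell-number recurrence, repackaged through the orbit deletion-contraction relation.
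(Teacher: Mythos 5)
Your proof is correct and follows essentially the same route as the paper, which only sketches ``use deletion-contraction repeatedly on the path and induct''; your version makes the induction explicit as the full Tutte-style expansion $F(P_{n+1}) = \sum_{S}(-1)^{|S|}F(H_S)$, using that a path is a tree so contractions never produce loops or parallel edges and each $H_S$ is the edgeless graph on $n+1-|S|$ vertices, counted by $\mathcal{B}_{n+1-|S|}$. The reindexing $i=n+1-k$ together with $F(P_{n+1})=\mathcal{B}_n$ then yields exactly \eqref{eq:rec}, so the argument is sound.
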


\begin{proof}
Use (\ref{eq:DelCon}) repeatedly on $P_n$ and induct on $n$.
\end{proof}

\begin{corollary}
Let $C_n$ denote the cycle graph on $n$ nodes. Then
\begin{equation}
\label{eq:cycleorbit}
F(C_n) = \mathcal{B}_{n-1} - \mathcal{B}_{n-2} + \dots + (-1)^n \mathcal{B}_1
\end{equation}
\end{corollary}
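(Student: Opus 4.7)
The plan is to derive a one-step recursion $F(C_n) = \mathcal{B}_{n-1} - F(C_{n-1})$ by applying the deletion-contraction identity from Lemma~\ref{lem:DelCon} to $C_n$, and then to unravel this recursion by induction on $n$.

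First, fix any edge $e = \{i,i+1\}$ of $C_n$. Deleting $e$ breaks the cycle and yields the path graph $P_n$, so $C_n - e = P_n$. Contracting $e$ identifies its two endpoints and produces the cycle $C_{n-1}$ (with the small caveat that for $n=3$ the contraction gives a multigraph with two parallel edges; since parallel edges impose the same constraint on proper colorings as a single edge, this does not affect the orbit count, so $F(C_n/e) = F(C_{n-1})$ in all cases). Plugging into Lemma~\ref{lem:DelCon} and using Lemma~\ref{Pathorbits} to evaluate $F(P_n) = \mathcal{B}_{n-1}$ gives
\[
\mathcal{B}_{n-1} \;=\; F(P_n) \;=\; F(C_n) + F(C_{n-1}),
\]
which rearranges to $F(C_n) = \mathcal{B}_{n-1} - F(C_{n-1})$.

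With the recursion in hand I proceed by induction on $n$, with base case $n=3$. For $C_3$ the only partition of the vertex set into independent sets of $C_3$ is $\{1\}\,\{2\}\,\{3\}$, so $F(C_3) = 1 = \mathcal{B}_2 - \mathcal{B}_1$, matching the claimed formula. For the inductive step, assuming $F(C_{n-1}) = \mathcal{B}_{n-2} - \mathcal{B}_{n-3} + \dots + (-1)^{n-1}\mathcal{B}_1$, the recursion yields
\[
F(C_n) \;=\; \mathcal{B}_{n-1} - \bigl(\mathcal{B}_{n-2} - \mathcal{B}_{n-3} + \dots + (-1)^{n-1}\mathcal{B}_1\bigr) \;=\; \mathcal{B}_{n-1} - \mathcal{B}_{n-2} + \dots + (-1)^n \mathcal{B}_1,
\]
which is exactly the claimed identity.

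The only conceptual wrinkle is confirming that deletion-contraction applies cleanly at $n=3$, where the contracted graph is strictly speaking a multigraph; as noted above, the bijection in the proof of Lemma~\ref{lem:DelCon} goes through verbatim since parallel edges contribute no additional constraint, so no real obstacle arises. Everything else is a direct two-line induction.
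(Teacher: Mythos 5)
Your proof is correct and follows exactly the approach the paper indicates (apply deletion--contraction to $C_n$ and induct on $n$): delete an edge to get $P_n$, contract to get $C_{n-1}$, combine with $F(P_n)=\mathcal{B}_{n-1}$ from Lemma~\ref{Pathorbits}, and unwind the recursion. The multigraph caveat you raise is actually moot given your choice of base case $n=3$, since the recursion is then only invoked for $n\geq 4$ where $C_n/e = C_{n-1}$ is simple.
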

\begin{proof}
Apply (\ref{eq:DelCon}) to $C_n$ and induct on $n$.
\end{proof}
\begin{remark}
Deletion-contraction does \emph{not} work if we mod out by graph symmetries, as the bijection $b$ in (\ref{eq:bijection}) does no longer hold. On the right-hand side of (\ref{eq:bijection}), we have removed an edge or identified two nodes, which changes the automorphism group. However, one should in principle be able to determine the number of orbits using Burnside's lemma.
\end{remark}

\section{Counting Discrete Markov Processes}
\label{sec:DMP}
\subsection{The General Setup}

In this section, we want to show how we can use a similar approach to obtain results about counting discrete Markov chains. We will first formulate the result in a general form before we apply it to order polytopes of what we call \emph{stacked posets}.

For us, a \emph{discrete Markov chain} is a discrete-time stochastic process with random variables $\{X_1, X_2, \dots, X_m \}$, where the random variables take values in a discrete set of states $\mathcal{S} $. Moreover, we require that a transition from state $s^i$ to state $s^j$ only depends on state $s^i$ and not on any previous states. A \emph{base state} is a labeling $b \in \mathbb{Z}^m$ of $m$ nodes by elements from $[m]$ with $\min_{i \in [m] } \{b_i \}=1$. A \emph{state} is an element in the equivalence class $b + k\mathbf{1}$, where $\mathbf{1}: = (1,\dots, 1)^t$ and $k \in \mathbb{Z}_{>0}$. A \emph{move} $f$ is a vector $f\in \mathbb{Z}^m$ and we say that $f$ is a move from state $s^i$ to $s^j$ if $r^i+f \in s^j$, where $r^i \in s^i$ and we denote it by  $s^i \stackrel{f}{\rightarrow} s^j$ . We assume that $|f_i|\leq m$ for all $i\in [m]$. Now let $\mathcal{S} = \left\{s^{1}, s^{2}, \dots, s^{r}  \right\}$ be a finite set of states with corresponding base states $\left\{b^1, b^2, \dots, b^r \right\}$ and let $\mathcal{F}$ be a finite set of moves. We define $M \in R^{r \times r}$ to be
\begin{equation}
\label{eq:MarkovMatrixDef}
M_{i,j} := M_{i,j}(x_1,x_2,\dots,x_r)\sum_{f\in \mathcal{F} \colon s^i \stackrel{f}{\rightarrow} s^j }x^f:=\sum_{f\in \mathcal{F} \colon s^i \stackrel{f}{\rightarrow} s^j } x_1^{f_1}\dots  x_r^{f_r}\text{,}
\end{equation} 
where $R:=\mathbb{Z}[\![ x_1^{\pm 1},x_2^{\pm 1}, \dots, x_r^{\pm 1}]\!]$ and we call $M$ a \emph{transition matrix}.

Similar to Theorem \ref{thm:L^n_entry}, $M_{i,j}^n$ records all chains of length $n+1$ with starting state $s^i$ and ending state $s^j$. Therefore, $M^n$ records all information on $n+1$ chains, and we thus get the following:

\begin{theorem}
\label{thm:MarkovMatrix}
With the previous notation, we have:

\begin{itemize}
\item the number of chains of length $n+1$ is given by
\begin{equation}
\label{eq:MarkovChain}
\# \text{number of chains of length n+1} = \mathbf{1}^t M^n \left|_{\mathbf x = (1,1,\dots,1)} \right. \mathbf{1}\text{,}
\end{equation}
\item the number $I_k$ of chains so that no element is increased by more than $k$ is given by
\begin{equation}
\label{eq:MarkovLocalChange}
I_k = \left(\deg_{x_1, x_2, \dots, x_r\leq k}(x^{b^1},\dots,x^{b^n}) M^n \right) \left|_{\mathbf x = (1,1,\dots,1)} \right. \mathbf{1}\text{,}
\end{equation}
where no indeterminate $x_i$ can have a degree bigger than $k$.
\end{itemize} 
\end{theorem}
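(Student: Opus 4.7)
The plan is to prove both parts simultaneously by establishing the following strengthened claim by induction on $n$: the $(i,j)$-entry $(M^n)_{i,j}$ is a polynomial in the variables $x_1,\dots,x_r$ whose monomials $x^f$ are in bijection with chains $s^i = s^{i_0} \stackrel{f^1}\to s^{i_1} \stackrel{f^2}\to \cdots \stackrel{f^n}\to s^{i_n} = s^j$ of length $n+1$, where each such chain contributes the monomial $x^{f^1+\cdots+f^n}$ recording the cumulative move. The base case $n=1$ is exactly the definition \eqref{eq:MarkovMatrixDef}. For the inductive step, I would write $(M^n)_{i,j} = \sum_{k=1}^r (M^{n-1})_{i,k} M_{k,j}$; the induction hypothesis identifies $(M^{n-1})_{i,k}$ with the generating function of chains of length $n$ from $s^i$ to $s^k$, weighted by their cumulative move, and multiplying by $M_{k,j}$ appends one further move $s^k \stackrel{f}\to s^j$, contributing the factor $x^f$. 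Since exponents add under multiplication in $R$, the resulting monomial $x^{f^1+\cdots+f^{n-1}+f}$ records precisely the cumulative move along the extended chain, which is exactly what is required.

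Given this strengthened form, part (1) follows immediately: evaluating at $\mathbf x = (1,\dots,1)$ collapses every monomial to $1$, so $(M^n)_{i,j}|_{\mathbf x=\mathbf 1}$ counts chains of length $n+1$ from $s^i$ to $s^j$, and summing over all starting and ending states via $\mathbf{1}^t \,(\,\cdot\,)\,\mathbf{1}$ yields the total number of chains of length $n+1$.

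For part (2), the row vector $(x^{b^1},\dots,x^{b^n})$ acts as a record of starting states: the $i$-th slot of $(x^{b^1},\dots,x^{b^n})\,M^n$ is the generating function $\sum_{j} x^{b^i}\,(M^n)_{i,j} = \sum_{\text{chains from } s^i} x^{b^i + f^1+\cdots+f^n}$, so the exponents now track the full ending position of the chain (starting at the base state $b^i$ and adding successive moves) rather than just the cumulative move. Imposing $\deg_{x_1,\dots,x_r\le k}$ keeps exactly those monomials for which each coordinate of the ending position is bounded by $k$, which is precisely the condition that no element has been increased beyond $k$. Setting $\mathbf x = (1,\dots,1)$ and pairing with $\mathbf 1$ then counts the surviving chains, giving \eqref{eq:MarkovLocalChange}.

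The main obstacle is not a computational one but a bookkeeping one: one must check that the interpretation of the cumulative exponent $f^1+\cdots+f^n$ as the net displacement is compatible with how states are represented modulo the equivalence $b + k\mathbf 1 \sim b$, and that the hypothesis $|f_i|\le m$ together with the bounded degree cap correctly excludes no admissible chain. Once this is verified, both identities drop out of the strengthened inductive statement without further calculation, in direct analogy with the proof of Theorem~\ref{thm:L^n_entry}.
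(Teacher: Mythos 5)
Your argument is correct and matches the paper's approach: the paper justifies the theorem with the single observation (before the statement) that, as in Theorem~\ref{thm:L^n_entry}, $(M^n)_{i,j}$ records all chains of length $n+1$ from $s^i$ to $s^j$, with the proof being the same induction on $n$ via matrix multiplication that you spell out. Your version just makes explicit the bookkeeping that the paper leaves implicit, namely that each chain contributes the monomial $x^{f^1+\cdots+f^n}$ (so ``monomials in bijection with chains'' should really be read with multiplicity, i.e.\ the coefficient of $x^f$ counts chains with cumulative move $f$), and that prepending the row vector $(x^{b^1},\dots,x^{b^r})$ shifts the exponent from the net displacement to the terminal labeling, which is what the degree cap $\deg_{x_i}\le k$ then constrains.
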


\subsection{Application to Order Polytopes} 
In this section, we want to apply Theorem \ref{thm:MarkovMatrix} to a certain class of order polytopes which are geometric objects associated to posets which were famously introduced by Stanley \cite{StanleyOrderPoly}. A \emph{partially ordered set} ---or \emph{poset} for short--- $\mathcal{P}$ is a set together with a binary relation $\leq_{\mathcal{P}}$ that is reflexive, antisymmetric, and transitive, and $\leq_{\mathcal{P}}$ is called a \emph{partial order on $\mathcal{P}$}. If the poset is clear from the context, we will simply write $\leq$. All our posets will be \emph{finite}. To every poset $\mathcal{P}$, one can define the order polytope $\mathcal{O}(\mathcal{P})$:
\begin{definition}[\cite{StanleyOrderPoly}, Def. 1.1]
The order polytope $\mathcal{O}(\mathcal{P})$ of the poset $\mathcal{P}$ is the subset of $\mathbb{R}^{\mathcal{P}} = \{f\colon \mathcal{P} \rightarrow \mathbb{R} \}$ defined by the conditions
\begin{align*}
&0 \leq f(x) \leq 1  &\qquad \text{for all } x\in \mathcal{P}, \\
&f(x)\leq f(y)      &\qquad \text{if } x\leq y \text{ in }\mathcal{P}.\\
\end{align*}
\end{definition}
The geometry of order polytopes encodes combinatorial properties of the underlying poset. For instance, order filters of $\mathcal{P}$ are in bijection with the vertices of $\mathcal{O}(\mathcal{P})$. An \emph{order filter $F$} is a set such that $x\in F$ together with $y \leq x$ implies that $y \in F$. The vertices are then the indicator vectors of order filters. This implies that order polytopes are always lattice polytopes. Furthermore, order polytopes are also closely related to order-preserving maps.
\begin{definition}
A map $\pi \colon \mathcal{P} \longrightarrow \mathcal{Q}$ between two posets $\mathcal{P}$ and $\mathcal{Q}$ is called \emph{order-preserving} if $\pi(x) \leq_{\mathcal{Q}} \pi(y)$ for all $x \leq_{\mathcal{P}} y$.
\end{definition}
The following theorem connects the Ehrhart polynomial of order polytopes to order-preserving maps:
\begin{theorem}\cite[Theorem 4.1]{StanleyOrderPoly}
\label{thm:EhrPolyOrderPolytope}
Let $\mathcal{P}$ be a finite poset. The Ehrhart polynomial $E_{\mathcal{O}(\mathcal{P})}$ of $\mathcal{O}(\mathcal{P})$ is given by
\begin{equation}
\label{eq:EhrPolyOrderPolytope}
E_{\mathcal{O}(\mathcal{P})} (t) = \# \text{ order-preserving maps }\eta \colon \mathcal{P} \rightarrow [t+1].
\end{equation} 
\end{theorem}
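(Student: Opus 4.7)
The plan is to exhibit an explicit bijection between the lattice points of $t \cdot \mathcal{O}(\mathcal{P})$ and the set of order-preserving maps $\pi\colon \mathcal{P} \to [t+1]$, and then invoke the definition of the Ehrhart polynomial. Since $\mathcal{O}(\mathcal{P})$ is a lattice polytope (its vertices are $0/1$-indicator vectors of order filters of $\mathcal{P}$), the counting function $E_{\mathcal{O}(\mathcal{P})}(t) = \#(t\mathcal{O}(\mathcal{P}) \cap \mathbb{Z}^{\mathcal{P}})$ is a polynomial in $t \in \mathbb{Z}_{\geq 1}$, so it suffices to match the integer-point count with the number of order-preserving maps for each positive integer $t$.

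First, I would unravel the defining inequalities of $t \cdot \mathcal{O}(\mathcal{P})$. Dilating by $t$ turns the system $0 \leq f(x) \leq 1$, $f(x) \leq f(y)$ for $x \leq_{\mathcal{P}} y$ into $0 \leq g(x) \leq t$, $g(x) \leq g(y)$ for $x \leq_{\mathcal{P}} y$. A lattice point of $t\mathcal{O}(\mathcal{P})$ is therefore a function $g\colon \mathcal{P} \to \{0,1,\dots,t\}$ that is order-preserving with respect to the natural order on $\mathbb{Z}$.

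Next, I would define the map $\Phi\colon g \mapsto g + 1$, sending $g$ to the function $\pi(x) := g(x) + 1$, which takes values in $\{1,2,\dots,t+1\} = [t+1]$. Since adding a constant preserves the order, $\pi$ is order-preserving. Conversely, given an order-preserving $\pi\colon \mathcal{P} \to [t+1]$, the function $g(x) := \pi(x) - 1$ takes values in $\{0,1,\dots,t\}$ and is order-preserving, hence lies in $t\mathcal{O}(\mathcal{P}) \cap \mathbb{Z}^{\mathcal{P}}$. This produces a two-sided inverse, so $\Phi$ is a bijection and the two sets have the same cardinality, giving equation \eqref{eq:EhrPolyOrderPolytope} for every $t \in \mathbb{Z}_{\geq 1}$.

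There is no real obstacle here: the only subtle point is to remember the shift by $1$ that turns values in $\{0,\dots,t\}$ into values in $[t+1]$, which accounts for the fact that $\mathcal{O}(\mathcal{P})$ is a closed polytope including the hyperplanes $f(x) = 0$ and $f(x) = 1$. Once the bijection is established at every positive integer $t$, the polynomiality of $E_{\mathcal{O}(\mathcal{P})}$ (which follows from Ehrhart's theorem for lattice polytopes) lets us identify $E_{\mathcal{O}(\mathcal{P})}(t)$ with the counting function of order-preserving maps as polynomials in $t$.
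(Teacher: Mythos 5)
Your proof is correct and is essentially the standard argument (and the one used in the cited reference): dilating $\mathcal{O}(\mathcal{P})$ by $t$ identifies its lattice points with integer-valued order-preserving functions $g\colon\mathcal{P}\to\{0,1,\dots,t\}$, and the shift $\pi = g+1$ gives the bijection with order-preserving maps into $[t+1]$. The paper itself does not reprove this theorem but imports it from \cite{StanleyOrderPoly}; your proposal fills in precisely the argument that reference gives, correctly handling the $+1$ shift and invoking Ehrhart's theorem (applicable since $\mathcal{O}(\mathcal{P})$ is a lattice polytope, its vertices being $0/1$-indicator vectors of filters) to conclude polynomiality.
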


We will apply the machinery from the previous subsection to a special class of posets, which we call \emph{stacked posets}. We will then define what the states are and what the set of possible moves is. 
\begin{definition}
Let $\mathcal{P}$ be a graded poset of rank $2$ satisfying 
\[
\# \{x \in \mathcal{P} \colon \, \deg x = 1 \} = \# \{x \in \mathcal{P} \colon \, \deg x = 2 \}. 
\]
We can label the elements of $\mathcal{P}$ by $\{ (1,1),\dots, (1,l),(2,1),\dots, (2,l)\}$, and we assume that $(1,k) \leq_{\mathcal{P}} (2,k)$ for all $k$. We define \emph{the stacked poset $\mathcal{P}_n$ of height $n+1$} to be the poset with elements $ \{(i,j) \colon i\in [n+1], j\in [l] \}$ and with cover relations
\begin{equation}
\label{eq:CoveringCondition}
(i+1, p) \gtrdot_{\mathcal{P}_n} (i,q) \text{ if and only if }(2,p) \gtrdot_{\mathcal{P}} (1,q).
\end{equation}
This directly implies
\begin{equation}
\label{eq:CardinalityCondition}
\# \left\{x \in \mathcal{P}_n \colon \deg x = 1 \right\} = \dots =\# \left\{x \in \mathcal{P}_n \colon \deg x = n+1 \right\} \text{.}
\end{equation}
The subposet $\mathcal{P} : = \left\{x \in \mathcal{P}_n \colon \deg x \in \{1,2 \} \right\}$ is called the \emph{base poset}.
\end{definition}

Since $\mathcal{P}_n$ is completely determined by $\mathcal{P}$, we will normally only give the definition of $\mathcal{P}$.
\begin{example}
\label{ex:StackedPoset}
The poset $\mathcal{P} = \{a,b,c,d,e,f, g, h\}$ with the relations $a,b \leq_{\mathcal{P}} c , d$; $c,d \leq_{\mathcal{P}} e,f$; and   $e,f \leq_{\mathcal{P}} g,h$ is a $3$-stacked poset and with base poset $B = \{a,b,c,d \}$. 
\end{example}
\begin{figure}[H]
\label{fig:stacked_posets}
\center
\begin{tikzpicture}[scale=1]
\node[circle,draw=black,fill=white!80!black,minimum size=20,scale=.5] (1) at (0,0) {(1,1)};
\node[circle,draw=black,fill=white!80!black,minimum size=20,scale=.5]  (2) at (1,0) {(1,2)};
\node[circle,draw=black,fill=white!80!black,minimum size=20,scale=.5]  (3) at (0,1) {(2,1)};
\node[circle,draw=black,fill=white!80!black,minimum size=20,scale=.5]  (4) at (1,1) {(2,2)};
\node[circle,draw=black,fill=white!80!black,minimum size=20,scale=.5]  (5) at (0,2) {(3,1)};
\node[circle,draw=black,fill=white!80!black,minimum size=20,scale=.5]  (6) at (1,2) {(3,2)};
\node[circle,draw=black,fill=white!80!black,minimum size=20,scale=.5]  (7) at (0,3) {(4,1)};
\node[circle,draw=black,fill=white!80!black,minimum size=20,scale=.5]  (8) at (1,3) {(4,2)};
\draw (1) -- (3);
\draw (1) -- (4);
\draw (2) -- (4);
\draw (3) -- (5);
\draw (3) -- (6);
\draw (4) -- (6);
\draw (5) -- (7);
\draw (5) -- (8);
\draw (6) -- (8);

\node[circle,draw=black,fill=white!80!black,minimum size=20,scale=.5] (1) at (2,0) {(1,1)};
\node[circle,draw=black,fill=white!80!black,minimum size=20,scale=.5]  (2) at (3,0) {(1,2)};
\node[circle,draw=black,fill=white!80!black,minimum size=20,scale=.5]  (3) at (2,1) {(2,1)};
\node[circle,draw=black,fill=white!80!black,minimum size=20,scale=.5]  (4) at (3,1) {(2,2)};
\node[circle,draw=black,fill=white!80!black,minimum size=20,scale=.5]  (5) at (2,2) {(3,1)};
\node[circle,draw=black,fill=white!80!black,minimum size=20,scale=.5]  (6) at (3,2) {(3,2)};
\node[circle,draw=black,fill=white!80!black,minimum size=20,scale=.5]  (7) at (2,3) {(4,1)};
\node[circle,draw=black,fill=white!80!black,minimum size=20,scale=.5]  (8) at (3,3) {(4,2)};
\draw (1) -- (3);
\draw (1) -- (4);
\draw (2) -- (3);
\draw (2) -- (4);
\draw (3) -- (5);
\draw (3) -- (6);
\draw (4) -- (5);
\draw (4) -- (6);
\draw (5) -- (7);
\draw (5) -- (8);
\draw (6) -- (7);
\draw (6) -- (8);
\end{tikzpicture}
\caption{Two stacked posets, $\mathcal{P}$ from Example \ref{ex:StackedPoset} is on the right.}
\end{figure}
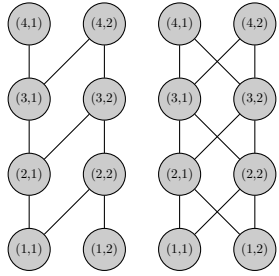
Next, we define the (base) states and moves of a stacked poset. 
\begin{definition}
A \emph{base state $b$} of a stacked poset is given by a labeling of all elements of same degree by numbers from $1$, $2$, $\dots$, $m$ such that if $j$ appears in the labeling, then all $1$, $2$, $\dots$, $j-1$ appear too, where $m = \# \left\{ x \colon \deg x = 1\right\}$. We identify this base state with the vector $b\in \left\{1,2,\dots,m \right\}^m$. A \emph{move from state $s^i$ to $s^j$} is adding a vector $f\in  \left\{0,1,2,\dots,m \right\}^m$ to $b^i$ such that
\begin{enumerate}
\item $b^i$ and $b^i+f$ contain all numbers from $1$ to $\max_{1 \leq k \leq m} (b^i +f)_k $,
\item $b^i +f \in s^j$,
\item and if we label the degree $a$ elements of $\mathcal{P}$ by $b^i$ and the degree $a+1$ elements by $b^i + f$, we get an order-preserving labeling.
\end{enumerate} 
The \emph{set of moves} $\mathcal{F}$ contains all possible moves.
\end{definition}

As we will see, this definition enables us to determine the number of surjective, order-preserving maps of $\mathcal{P}_n$ into $[k]$.
\begin{example}
\label{ex:TransitionMatrix}
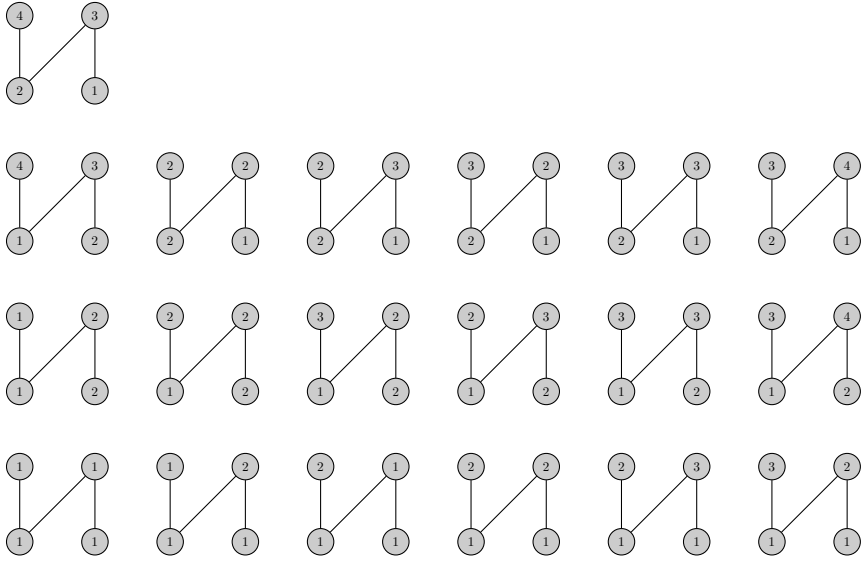
\begin{figure}[H]
\begin{tikzpicture}

\node[circle,draw=black,fill=white!80!black,minimum size=20,scale=.5] (1) at (0,0) {1};
\node[circle,draw=black,fill=white!80!black,minimum size=20,scale=.5]  (2) at (1,0) {1};
\node[circle,draw=black,fill=white!80!black,minimum size=20,scale=.5]  (3) at (0,1) {1};
\node[circle,draw=black,fill=white!80!black,minimum size=20,scale=.5]  (4) at (1,1) {1};
\draw (1) -- (3);
\draw (1) -- (4);
\draw (2) -- (4);

\node[circle,draw=black,fill=white!80!black,minimum size=20,scale=.5] (1) at (2,0) {1};
\node[circle,draw=black,fill=white!80!black,minimum size=20,scale=.5]  (2) at (3,0) {1};
\node[circle,draw=black,fill=white!80!black,minimum size=20,scale=.5]  (3) at (2,1) {1};
\node[circle,draw=black,fill=white!80!black,minimum size=20,scale=.5]  (4) at (3,1) {2};
\draw (1) -- (3);
\draw (1) -- (4);
\draw (2) -- (4);

\node[circle,draw=black,fill=white!80!black,minimum size=20,scale=.5] (1) at (4,0) {1};
\node[circle,draw=black,fill=white!80!black,minimum size=20,scale=.5]  (2) at (5,0) {1};
\node[circle,draw=black,fill=white!80!black,minimum size=20,scale=.5]  (3) at (4,1) {2};
\node[circle,draw=black,fill=white!80!black,minimum size=20,scale=.5]  (4) at (5,1) {1};
\draw (1) -- (3);
\draw (1) -- (4);
\draw (2) -- (4);

\node[circle,draw=black,fill=white!80!black,minimum size=20,scale=.5] (1) at (6,0) {1};
\node[circle,draw=black,fill=white!80!black,minimum size=20,scale=.5]  (2) at (7,0) {1};
\node[circle,draw=black,fill=white!80!black,minimum size=20,scale=.5]  (3) at (6,1) {2};
\node[circle,draw=black,fill=white!80!black,minimum size=20,scale=.5]  (4) at (7,1) {2};
\draw (1) -- (3);
\draw (1) -- (4);
\draw (2) -- (4);

\node[circle,draw=black,fill=white!80!black,minimum size=20,scale=.5] (1) at (8,0) {1};
\node[circle,draw=black,fill=white!80!black,minimum size=20,scale=.5]  (2) at (9,0) {1};
\node[circle,draw=black,fill=white!80!black,minimum size=20,scale=.5]  (3) at (8,1) {2};
\node[circle,draw=black,fill=white!80!black,minimum size=20,scale=.5]  (4) at (9,1) {3};
\draw (1) -- (3);
\draw (1) -- (4);
\draw (2) -- (4);

\node[circle,draw=black,fill=white!80!black,minimum size=20,scale=.5] (1) at (10,0) {1};
\node[circle,draw=black,fill=white!80!black,minimum size=20,scale=.5]  (2) at (11,0) {1};
\node[circle,draw=black,fill=white!80!black,minimum size=20,scale=.5]  (3) at (10,1) {3};
\node[circle,draw=black,fill=white!80!black,minimum size=20,scale=.5]  (4) at (11,1) {2};
\draw (1) -- (3);
\draw (1) -- (4);
\draw (2) -- (4);

\node[circle,draw=black,fill=white!80!black,minimum size=20,scale=.5] (1) at (0,2) {1};
\node[circle,draw=black,fill=white!80!black,minimum size=20,scale=.5]  (2) at (1,2) {2};
\node[circle,draw=black,fill=white!80!black,minimum size=20,scale=.5]  (3) at (0,3) {1};
\node[circle,draw=black,fill=white!80!black,minimum size=20,scale=.5]  (4) at (1,3) {2};
\draw (1) -- (3);
\draw (1) -- (4);
\draw (2) -- (4);

\node[circle,draw=black,fill=white!80!black,minimum size=20,scale=.5] (1) at (2,2) {1};
\node[circle,draw=black,fill=white!80!black,minimum size=20,scale=.5]  (2) at (3,2) {2};
\node[circle,draw=black,fill=white!80!black,minimum size=20,scale=.5]  (3) at (2,3) {2};
\node[circle,draw=black,fill=white!80!black,minimum size=20,scale=.5]  (4) at (3,3) {2};
\draw (1) -- (3);
\draw (1) -- (4);
\draw (2) -- (4);

\node[circle,draw=black,fill=white!80!black,minimum size=20,scale=.5] (1) at (4,2) {1};
\node[circle,draw=black,fill=white!80!black,minimum size=20,scale=.5]  (2) at (5,2) {2};
\node[circle,draw=black,fill=white!80!black,minimum size=20,scale=.5]  (3) at (4,3) {3};
\node[circle,draw=black,fill=white!80!black,minimum size=20,scale=.5]  (4) at (5,3) {2};
\draw (1) -- (3);
\draw (1) -- (4);
\draw (2) -- (4);

\node[circle,draw=black,fill=white!80!black,minimum size=20,scale=.5] (1) at (6,2) {1};
\node[circle,draw=black,fill=white!80!black,minimum size=20,scale=.5]  (2) at (7,2) {2};
\node[circle,draw=black,fill=white!80!black,minimum size=20,scale=.5]  (3) at (6,3) {2};
\node[circle,draw=black,fill=white!80!black,minimum size=20,scale=.5]  (4) at (7,3) {3};
\draw (1) -- (3);
\draw (1) -- (4);
\draw (2) -- (4);

\node[circle,draw=black,fill=white!80!black,minimum size=20,scale=.5] (1) at (8,2) {1};
\node[circle,draw=black,fill=white!80!black,minimum size=20,scale=.5]  (2) at (9,2) {2};
\node[circle,draw=black,fill=white!80!black,minimum size=20,scale=.5]  (3) at (8,3) {3};
\node[circle,draw=black,fill=white!80!black,minimum size=20,scale=.5]  (4) at (9,3) {3};
\draw (1) -- (3);
\draw (1) -- (4);
\draw (2) -- (4);

\node[circle,draw=black,fill=white!80!black,minimum size=20,scale=.5] (1) at (10,2) {1};
\node[circle,draw=black,fill=white!80!black,minimum size=20,scale=.5]  (2) at (11,2) {2};
\node[circle,draw=black,fill=white!80!black,minimum size=20,scale=.5]  (3) at (10,3) {3};
\node[circle,draw=black,fill=white!80!black,minimum size=20,scale=.5]  (4) at (11,3) {4};
\draw (1) -- (3);
\draw (1) -- (4);
\draw (2) -- (4);


\node[circle,draw=black,fill=white!80!black,minimum size=20,scale=.5] (1) at (0,4) {1};
\node[circle,draw=black,fill=white!80!black,minimum size=20,scale=.5]  (2) at (1,4) {2};
\node[circle,draw=black,fill=white!80!black,minimum size=20,scale=.5]  (3) at (0,5) {4};
\node[circle,draw=black,fill=white!80!black,minimum size=20,scale=.5]  (4) at (1,5) {3};
\draw (1) -- (3);
\draw (1) -- (4);
\draw (2) -- (4);

\node[circle,draw=black,fill=white!80!black,minimum size=20,scale=.5] (1) at (2,4) {2};
\node[circle,draw=black,fill=white!80!black,minimum size=20,scale=.5]  (2) at (3,4) {1};
\node[circle,draw=black,fill=white!80!black,minimum size=20,scale=.5]  (3) at (2,5) {2};
\node[circle,draw=black,fill=white!80!black,minimum size=20,scale=.5]  (4) at (3,5) {2};
\draw (1) -- (3);
\draw (1) -- (4);
\draw (2) -- (4);

\node[circle,draw=black,fill=white!80!black,minimum size=20,scale=.5] (1) at (4,4) {2};
\node[circle,draw=black,fill=white!80!black,minimum size=20,scale=.5]  (2) at (5,4) {1};
\node[circle,draw=black,fill=white!80!black,minimum size=20,scale=.5]  (3) at (4,5) {2};
\node[circle,draw=black,fill=white!80!black,minimum size=20,scale=.5]  (4) at (5,5){3};
\draw (1) -- (3);
\draw (1) -- (4);
\draw (2) -- (4);

\node[circle,draw=black,fill=white!80!black,minimum size=20,scale=.5] (1) at (6,4) {2};
\node[circle,draw=black,fill=white!80!black,minimum size=20,scale=.5]  (2) at (7,4) {1};
\node[circle,draw=black,fill=white!80!black,minimum size=20,scale=.5]  (3) at (6,5) {3};
\node[circle,draw=black,fill=white!80!black,minimum size=20,scale=.5]  (4) at (7,5) {2};
\draw (1) -- (3);
\draw (1) -- (4);
\draw (2) -- (4);

\node[circle,draw=black,fill=white!80!black,minimum size=20,scale=.5] (1) at (8,4) {2};
\node[circle,draw=black,fill=white!80!black,minimum size=20,scale=.5]  (2) at (9,4) {1};
\node[circle,draw=black,fill=white!80!black,minimum size=20,scale=.5]  (3) at (8,5) {3};
\node[circle,draw=black,fill=white!80!black,minimum size=20,scale=.5]  (4) at (9,5) {3};
\draw (1) -- (3);
\draw (1) -- (4);
\draw (2) -- (4);

\node[circle,draw=black,fill=white!80!black,minimum size=20,scale=.5] (1) at (10,4) {2};
\node[circle,draw=black,fill=white!80!black,minimum size=20,scale=.5]  (2) at (11,4) {1};
\node[circle,draw=black,fill=white!80!black,minimum size=20,scale=.5]  (3) at (10,5) {3};
\node[circle,draw=black,fill=white!80!black,minimum size=20,scale=.5]  (4) at (11,5) {4};
\draw (1) -- (3);
\draw (1) -- (4);
\draw (2) -- (4);

\node[circle,draw=black,fill=white!80!black,minimum size=20,scale=.5] (1) at (0,6) {2};
\node[circle,draw=black,fill=white!80!black,minimum size=20,scale=.5]  (2) at (1,6) {1};
\node[circle,draw=black,fill=white!80!black,minimum size=20,scale=.5]  (3) at (0,7) {4};
\node[circle,draw=black,fill=white!80!black,minimum size=20,scale=.5]  (4) at (1,7) {3};
\draw (1) -- (3);
\draw (1) -- (4);
\draw (2) -- (4);

\end{tikzpicture}
\caption{A list of all transitions.}
\end{figure}

Let $\mathcal{P} = \{a,b,c,d,e,f\}$ with the relations $a \leq_{\mathcal{P}} c , d$; $b\leq_{\mathcal{P}} d$. The base states of $\mathcal{P}_n$ are $(1,1)$, $(1,2)$, and $(2,1)$. The set of moves contains the elements $(0,0)$, $(0,1)$, $(1,0)$ $(1,1)$, $(1,2)$, $(2,1)$, $(2,2)$, $(1,3)$, and $(3,1)$. Therefore, we get the transition matrix
\begin{equation*}
M = \begin{pmatrix}
 1 + xy   & x + xy^2       & x + x^2 y \\
 x + x^2y & 1+ xy + x^2y^2 & x^2 + x^3y \\
 y + xy^2 & y^2+ x y^3     & xy+x^2y^2 \\
\end{pmatrix}\text{.}
\end{equation*}
\end{example}

\begin{proposition}
\label{SurjOrderPreserving}
Let $\mathcal{P}_n$ be a stacked poset. The number of surjective, order-preserving maps from $\mathcal{P}_n$ into $[k]$ is given by
\begin{equation}
\label{eq:SurjOrderPreserving}
\# \left\{\pi\colon \mathcal{P}_n \twoheadrightarrow [k]  \colon \text{order-pres.} \right\}= \left(\operatorname{tdeg}_{= k} (x^{b^1},\dots, x^{b^r})M^{n}\right)\left|_{\mathbf x = (1,1,\dots,1)}, \right. \mathbf{1}
\end{equation}
where $\mathbf x = (x_1, \dots, x_m)$ and where $\operatorname{tdeg}_{= k}$ denotes the terms whose total degree equals $k$.
\end{proposition}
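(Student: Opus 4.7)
The plan is to adapt the logic used to prove Theorem~\ref{thm:MarkovMatrix}. First I would establish that every order-preserving map $\pi\colon \mathcal{P}_n \to \mathbb{Z}_{>0}$ decomposes uniquely into a sequence of layer labelings $L_1, L_2, \ldots, L_{n+1}$, one per degree class, and that order-preservation (read against the cover relations (\ref{eq:CoveringCondition})) is equivalent to requiring each difference $L_{i+1} - L_i$ to be a valid move from the state of $L_i$ to the state of $L_{i+1}$. The three conditions on a move align with this correspondence: condition (3) is order preservation between consecutive layers, (2) records the state transition, and (1) enforces that the combined labels of two consecutive layers form a contiguous block $\{1, 2, \ldots, \max\}$.

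Next I would argue that for any surjective order-preserving map onto $[k]$, the initial labeling $L_1$ is forced to be a base state. Indeed, the label $1$ must appear in $L_1$ because the minimal elements of $\mathcal{P}_n$ lie in layer one and surjectivity requires that $1$ be attained; the move condition (1) combined with the componentwise monotonicity $L_i \leq L_{i+1}$ (coming from the base relations $(1,k) \leq (2,k)$) then propagates the ``contiguous labels starting from $1$'' property back to $L_1$ itself. Hence starting the chain at a base state $b^{i_0}$ is without loss of generality, and the row vector $(x^{b^1}, \ldots, x^{b^r})$ correctly enumerates all allowed initializations.

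Unpacking the matrix product $(x^{b^1}, \ldots, x^{b^r}) M^n$ exactly as in Theorem~\ref{thm:MarkovMatrix}, each chain of states $s^{i_0} \to s^{i_1} \to \cdots \to s^{i_n}$ with intermediate moves $f^{(0)}, \ldots, f^{(n-1)}$ contributes the monomial
\[
x^{b^{i_0}} \cdot x^{f^{(0)}} \cdots x^{f^{(n-1)}} = x^{L_{n+1}},
\]
where $L_{n+1} = b^{i_0} + \sum_l f^{(l)}$ is the top layer's labeling. Summing produces a row vector whose entries generating-function-enumerate chains by their terminal labelings.

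The main obstacle is the identification of the degree constraint with surjectivity: one must show that extracting $\operatorname{tdeg}_{= k}$ isolates precisely those chains whose associated order-preserving map surjects onto $[k]$. By the non-skipping move condition and the monotonicity of the chain, the labels actually used by $\pi$ form the contiguous block $\{1, 2, \ldots, \max_j L_{n+1,j}\}$, so the combinatorial content of the statement is that the exponent bookkeeping built into $x^{L_{n+1}}$ translates the condition that $k$ is attained in the top layer into a total-degree condition on the monomial. Once that identification is established, setting $\mathbf{x} = (1, \ldots, 1)$ and dotting with $\mathbf{1}$ collapses the generating function to the integer count on the left-hand side of (\ref{eq:SurjOrderPreserving}).
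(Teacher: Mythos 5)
Your overall strategy mirrors the paper's: interpret chains through $M^n$ as sequences of layer labelings $L_1,\ldots,L_{n+1}$, identify each chain's contribution as the monomial $x^{L_{n+1}} = x^{b^{i_0}} x^{f^{(0)}}\cdots x^{f^{(n-1)}}$, and then read off the surjectivity condition from the exponent. That much agrees with how the paper proceeds. However, you explicitly label the crucial step --- that $\operatorname{tdeg}_{=k}$ extracts exactly the chains whose map surjects onto $[k]$ --- as the ``main obstacle,'' and you never actually discharge it: you observe (correctly) that the labels used by $\pi$ form the contiguous block $\{1,\dots,\max_j L_{n+1,j}\}$, and then assert that ``exponent bookkeeping'' turns the condition $\max_j L_{n+1,j}=k$ into a total-degree condition, ending with ``once that identification is established.'' This is precisely the point that must be argued, and it does not go through as stated. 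The total degree of $x^{L_{n+1}}$ is $\sum_j L_{n+1,j}$, whereas the quantity you need is $\max_j L_{n+1,j}$; these agree when $m=1$ (the chain-poset case, which is the only case the paper verifies numerically), but already for $m=2$ they diverge. For instance, in the paper's Example~\ref{ex:TransitionMatrix} the constant map $\pi\equiv 1$ on $\mathcal{P}_1$ corresponds to the chain $L_1=L_2=(1,1)$, contributing the monomial $x_1x_2$ of total degree $2$, yet it surjects onto $[1]$, not $[2]$.

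There is a second gap in your reduction to the matrix product, independent of the degree issue. You argue that surjectivity forces $L_1$ to be a base state because ``the move condition (1) combined with componentwise monotonicity propagates the `contiguous labels starting from $1$' property back to $L_1$.'' But condition (1) controls the \emph{union} of two consecutive layers, not each layer separately, and componentwise monotonicity only gives $\min L_1 = 1$; it does not force $L_1$ to use a contiguous block. Concretely, take the base poset of Example~\ref{ex:TransitionMatrix} with $L_1=(1,3)$ and $L_2=(2,3)$: this is an order-preserving surjection of $\mathcal{P}_1$ onto $[3]$, yet $(1,3)$ is neither a base state nor even in any state of the formalism (no shift $b+c\mathbf{1}$ of a base state produces $(1,3)$ when $m=2$). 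So some surjective order-preserving maps are not represented by any chain in $(x^{b^1},\dots,x^{b^r})M^n$, and the right-hand side cannot count them. Both of these are real holes, not points that routine elaboration would close; any complete proof of the statement would have to resolve them (or restrict the claim to $m=1$, or change the weight assigned to each move).
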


\begin{proof}
The set of moves $\mathcal{F}$ was defined so that we get an order-preserving map (condition (3)) and that the map is surjective (condition (1)). Therefore, \[(x^{b^1},\dots, x^{b^r})M^{n} \mathbf{1}\] contains all possible surjective, order-preserving maps. The terms of total degree $k$ correspond to all surjective, order-preserving maps into $[k]$.
\end{proof}
\begin{corollary}
\label{cor:OrderEhrPoly}
Let $\mathcal{P}_n$ be an $n$-stacked poset with base poset $\mathcal{P}$ and transition matrix $M$, then
\begin{equation}
\label{eq:OrderEhrPoly}
\operatorname{Ehr}_{\mathcal{O}(\mathcal{P}_n)}(k) = \sum_{i=1}^{k+1}\left. \begin{pmatrix}
k+1 \\ i
\end{pmatrix}  \left(\operatorname{tdeg}_{= i} (x^{b^1},\dots, x^{b^r})M^{n} \right)\right|_{\mathbf x = (1,1,\dots,1)}  \mathbf{1},
\end{equation}
where $\mathbf x = (x_1, \dots, x_m)$ .
\end{corollary}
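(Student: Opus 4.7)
The plan is to reduce the Ehrhart polynomial count to a sum over image sizes and then invoke Proposition \ref{SurjOrderPreserving} term by term. By Theorem \ref{thm:EhrPolyOrderPolytope}, the left-hand side equals the number of order-preserving maps $\eta\colon \mathcal{P}_n \rightarrow [k+1]$, so it suffices to partition these maps by the cardinality $i = \#\eta(\mathcal{P}_n)$ of their image.

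First I would observe that every order-preserving map $\eta\colon \mathcal{P}_n \rightarrow [k+1]$ factors uniquely through its image: choose the $i$-element subset $S = \eta(\mathcal{P}_n) \subseteq [k+1]$, and then $\eta$ becomes a \emph{surjective} order-preserving map $\mathcal{P}_n \twoheadrightarrow S$. Since $[k+1]$ is totally ordered, each $i$-element subset $S$ is order-isomorphic (as a chain) to $[i]$, and this order-isomorphism sets up a bijection between surjective order-preserving maps onto $S$ and surjective order-preserving maps onto $[i]$. There are $\binom{k+1}{i}$ such subsets $S$, so
\begin{equation*}
\operatorname{Ehr}_{\mathcal{O}(\mathcal{P}_n)}(k) = \sum_{i=1}^{k+1} \binom{k+1}{i} \cdot \#\{\pi\colon \mathcal{P}_n \twoheadrightarrow [i] \colon \text{order-pres.}\}.
\end{equation*}

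Next, I would apply Proposition \ref{SurjOrderPreserving} directly to each summand, replacing the count of surjective order-preserving maps onto $[i]$ by $\left(\operatorname{tdeg}_{= i}(x^{b^1},\dots,x^{b^r}) M^n\right)\big|_{\mathbf{x}=(1,\dots,1)} \mathbf{1}$. Substituting and keeping the binomial coefficient outside yields exactly the right-hand side of \eqref{eq:OrderEhrPoly}.

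I do not expect any serious obstacle: the only subtlety is the factorization through the image and the observation that any $i$-element subset of a chain is isomorphic as a poset to $[i]$, which is why a plain $\binom{k+1}{i}$ suffices as a multiplicity rather than something more elaborate. Once that combinatorial identity is in place, Proposition \ref{SurjOrderPreserving} does all the remaining work.
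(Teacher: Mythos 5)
Your proof is correct and follows essentially the same route as the paper: both invoke Theorem~\ref{thm:EhrPolyOrderPolytope} to reinterpret the Ehrhart polynomial as counting order-preserving maps into $[k+1]$, partition those maps by the size of their image, identify each $i$-element subset of the chain $[k+1]$ with $[i]$, and then apply Proposition~\ref{SurjOrderPreserving}. You simply spell out the chain-isomorphism step that the paper leaves implicit.
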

\begin{remark}
\label{rem:EhrPositive}
This together with Proposition~\ref{prop:StructureM} shows that Ehrhart polynomials of order polytopes of stacked posets are Ehrhart positive, i. e., the coefficients of the Ehrhart polynomial in the monomial basis are positive.
\end{remark}
\begin{proof}
By Theorem \ref{thm:EhrPolyOrderPolytope}, the Ehrhart polynomial $\operatorname{Ehr}_{\mathcal{O}(\mathcal{P}_n)}(k)$ counts the number of order-preserving maps $\pi \colon \mathcal{P}_n \longrightarrow [k+1]$. However, we can see that
\begin{equation*}
\left \{ \pi  \colon \mathcal{P}_n \rightarrow [k+1] \colon \text{order-pres.}\right\} = \bigsqcup_{i=1}^{k+1}\left\{ \pi  \colon \mathcal{P}_n \twoheadrightarrow I \colon \text{ord-pres. } I\subset [k+1], \# I = i\right\} 
\end{equation*}
The claim now follows using Proposition \ref{SurjOrderPreserving}.
\end{proof}
\begin{example}
Let $\mathcal{P}_n$ be a chain of length $n$. Then $\mathcal{P}_n$ is a stacked poset, and $\mathcal{O}(\mathcal{P}_n)$ is unimodularly equivalent to the standard $n$-simplex. Thus, we have

\[\operatorname{Ehr}_{\mathcal{O}(\mathcal{P}_n)} = \binom{n+k}{k}.\]
On the other hand, applying Corollary \ref{cor:OrderEhrPoly}, we obtain the identity
\[
\operatorname{Ehr}_{\mathcal{O}(\mathcal{P}_n)} = \binom{n+k}{k} = \sum_{i=1}^{k+1} \binom{k+1}{i} \binom{n-1}{i-1}.
\]
\end{example}

We want to finish this section by stating explicit formulas for the entry $M_{i,j}$ where $i$ and $j$ correspond to the states $s^i$ and $s^j$, respectively. Let $S_{<_a}$ be the set of elements that are covered by an element $a$ in the given poset. 
\begin{proposition}
\label{prop:StructureM}
The entry $(i,j)$ of the transition matrix $M$ is given by 
\begin{equation*}
M_{i,j} = \sum_{\mu =0}^{\mu_{\max}} x_1^{a_1-b_1} \cdot \dots \cdot x_m^{a_m-b_m} \cdot (x_1\cdot \dots \cdot x_m)^{\lambda+ \mu}\text{,}
\end{equation*} 
where
\begin{itemize}
\item $a =(a_1, \dots, a_m)$ is the base state of $s^j$ and $b=(b_1, \dots, b_m)$ is the base state of $s^i$,
\item $\lambda = - \min \left(\left\{a_1 - b_p \colon p \in S_{<a_1} \right\}\cup \dots \cup\left\{a_m - b_p\colon p\in S_{<a_m}\right\}\right)$,
\item and $\mu_{\max} = m - \max \{a_1,\dots, a_m\} -\lambda + \max \{b_1, \dots, b_m\}$.
\end{itemize}
\end{proposition}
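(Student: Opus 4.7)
The plan is to enumerate the set of moves from state $s^i$ to state $s^j$ explicitly, showing that this set is parametrized by a single integer $c$, and then sum the monomials $x^f$ to obtain the claimed closed form.

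The first step is to observe that if $b$ and $a$ are the base states of $s^i$ and $s^j$, then every move $f$ taking $b$ into the equivalence class $s^j$ must satisfy $b + f = a + c\mathbf{1}$ for some nonnegative integer $c$, and hence $f = (a - b) + c\mathbf{1}$. Thus the set of valid moves from $s^i$ to $s^j$ is parametrized by those $c$ for which $f = (a-b) + c\mathbf{1}$ obeys conditions (1)--(3) of the move definition.

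The second step is to convert each condition into an inequality on $c$. Condition (3) --- that labeling the degree-$i$ layer by $b$ and the degree-$(i+1)$ layer by $b + f = a + c\mathbf{1}$ is order-preserving --- amounts to $a_k + c \ge b_p$ for every cover $(2,k) \gtrdot_{\mathcal{P}} (1,p)$ in the base poset, i.e., for every $p \in S_{<a_k}$ in the proposition's notation. Taking the tightest such inequality over all $k$ and $p$ gives the lower bound $c \ge \lambda$ with $\lambda$ exactly as stated. Condition (2) is automatic by construction, while condition (1) --- asserting that $b$ together with $b + f$ exhausts the initial segment $\{1, \ldots, \max_k (b+f)_k\}$ --- combined with the entrywise constraint that $f_k \in \{0,1,\dots,m\}$, yields an upper bound which after simplification reads $c \le \lambda + \mu_{\max}$.

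The third step is purely algebraic. For each admissible $c \in \{\lambda, \lambda + 1, \dots, \lambda + \mu_{\max}\}$, the corresponding monomial factors as
\begin{equation*}
x^f \;=\; x_1^{a_1 - b_1 + c} \cdots x_m^{a_m - b_m + c} \;=\; x_1^{a_1 - b_1} \cdots x_m^{a_m - b_m} \cdot (x_1 x_2 \cdots x_m)^c ,
\end{equation*}
and reindexing $c = \lambda + \mu$ before summing over $\mu \in \{0, 1, \dots, \mu_{\max}\}$ recovers the stated expression for $M_{i,j}$.

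The main obstacle is pinning down $\mu_{\max}$. One has to balance two competing upper bounds on $c$: the ``no-gap'' requirement coming from condition (1), which caps $c$ essentially at $\max_k b_k$ because a larger shift leaves an arithmetic gap between the values appearing in $b$ and those appearing in $a + c\mathbf{1}$, and the componentwise constraint $f_k \le m$, which caps $c$ by $m - \max_k (a_k - b_k)$. Showing that the effective bound simplifies uniformly to $m - \max_k a_k - \lambda + \max_k b_k$ requires a small case analysis that exploits the fact that $(1, k) \le_{\mathcal{P}} (2, k)$ for each $k$, tying $\lambda$ to the entrywise differences $b_k - a_k$; this is where the bulk of the bookkeeping lies.
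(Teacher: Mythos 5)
The proposal follows the paper's outline closely: parametrize moves from $s^i$ to $s^j$ by a single shift $c$ via $f=(a-b)+c\mathbf{1}$, extract the lower bound $c\ge\lambda$ from the order-preserving condition (3), and recover the stated monomial sum by reindexing $c=\lambda+\mu$. That much is fine and matches the paper. The genuine gap is in the derivation of the upper bound. You identify, correctly, that there are two competing constraints on $c$: the ``no-gap'' requirement from condition (1), which forces $c\le\max_k b_k$ so that $\{1,\dots,\max_k b_k\}\cup\{1+c,\dots,\max_k a_k+c\}$ has no hole, and the box constraint $f_k\le m$, which forces $c\le m-\max_k(a_k-b_k)$. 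But the claim that these ``simplify uniformly'' to $c\le\lambda+\mu_{\max}=m-\max_k a_k+\max_k b_k$ is asserted, not shown --- you explicitly defer it to ``a small case analysis'' that you describe as ``where the bulk of the bookkeeping lies.'' Since the exact value of $\mu_{\max}$ \emph{is} the content of the proposition, the proof is incomplete at precisely the point where the paper's own argument (counting new colors from the smallest shifted representative $\tilde a=a+\lambda\mathbf{1}$ upward) does its work.

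Worth flagging: the deferred step is not routine bookkeeping. Already for the self-loop entry $a=b$ you get $\lambda=0$ and the claimed $\mu_{\max}=m-\max_k a_k+\max_k b_k=m$, whereas the no-gap constraint caps $c$ at $\max_k b_k$, which is typically smaller. Concretely, with $m=2$ and $a=b=(1,1)$ (the $(1,1)$-entry in Example~\ref{ex:TransitionMatrix}), the formula would yield $1+xy+(xy)^2$ while the listed transitions give $M_{1,1}=1+xy$. So the two bounds you identify do not collapse to the stated $\mu_{\max}$ in general, which means either a further hypothesis on the poset is being used silently or the binding constraint has to be identified case by case. Either way, this is exactly the argument you have left out, so the proposal as written does not establish the proposition.
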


\begin{proof}
Let us first start by determining the entry $(1,i)$ where $1$ corresponds to the state $(1,\dots,1)$. We assume that $(a_1,\dots, a_m)$ is chosen so that $\min \{a_i \}=1$. The number of new colors introduced by $a$ is exactly $$\max \{a_1,\dots, a_m \} - \max \{1,\dots,1\} = \max \{a_1,\dots, a_m \} - 1.$$ The corresponding monomial is $x^{a_1-1}\dots  x_m^{a_m-1}$.  Moreover, every other monomial that can appear in the sum is of the form 
\[
x^{a_1-1}\dots x_m^{a_m-1} \cdot (x_1 \dots x_m)^{\mu}
\]
 
for some $\mu \in \mathbb{Z}_{>0}$. We can introduce up to $m$ new colors while staying surjective, i.e., that the labeling uses all colors from $1$ to the highest used color. Therefore,
\[
M_{1,i} = \sum_{\mu = 0}^{m - \max \{a_1,\dots, a_m\} +1} x_1^{a_1-1} \cdot \dots \cdot x_m^{a_m-1} \cdot (x_1\cdot \dots \cdot x_m)^{\mu}\text{.}
\]
 
Now we need to adjust this setting to an arbitrary base state $s^i$ corresponding to a vector $(b_1,\dots,b_m)$ where $\min \{ b_i\} = 1$. Target space might be represented by a different vector $\tilde{a} = a + (\lambda, \dots, \lambda)$. We want that 
\[
\min \left( \left\{\tilde{a}_1 -b \colon b\in S_{<_{\tilde{a}_1}} \right\}\cup \left\{\tilde{a}_m -b \colon b \in S_{<_{\tilde{a}_m}} \right\}\right) =0.
\]
However, this is equivalent to 
\[
\lambda = - \min \left(\left\{a_1 -b\colon b \in S_{< a_1} \right\}\cup \dots \cup\left\{ a_m - b\colon  b\in S_{< a_m}\right\}\right).
\]
 
This also gives the (by degree) smallest transition monomial, so we only need to figure out how many transitions from $s^i$ to $s^j$ there are. This corresponds to the question of how many colors we introduce from $(b_1,\dots, b_m)$ to $(\tilde a_1, \dots, \tilde a_m)$. However, the number of colors introduced is 
\[
\max \{ \tilde a_1, \dots, \tilde a_m\} - \max \{b_1,\dots, b_m \},
\]
 
which implies that the highest monomial appearing is
\[
x_1^{\tilde a_1 -b_1}\cdot \dots x_m^{\tilde a_m -b_m} \cdot (x_1 \cdot \dots \cdot x_m)^{\mu_{\max}} = x_1^{a_1 -b_1}\cdot \dots x_m^{a_m -b_m} \cdot (x_1 \cdot \dots \cdot x_m)^{\mu_{\max}+ \lambda},
\] 
where
\[
\mu_{\max} = m - \max \{a_1,\dots, a_m\} -\lambda + \max \{b_1, \dots, b_m\}.
\]
\end{proof}

\bibliographystyle{amsalpha}
\bibliography{diss_refs}

\newcommand{\etalchar}[1]{$^{#1}$}
\providecommand{\bysame}{\leavevmode\hbox to3em{\hrulefill}\thinspace}
\providecommand{\MR}{\relax\ifhmode\unskip\space\fi MR }
\providecommand{\MRhref}[2]{%
  \href{http://www.ams.org/mathscinet-getitem?mr=#1}{#2}
}
\providecommand{\href}[2]{#2}
\begin{thebibliography}{BBK{\etalchar{+}}15}

\bibitem[BBK{\etalchar{+}}15]{BeckEtAl-GorensteinLHC}
Matthias Beck, Benjamin Braun, Matthias K{\"o}ppe, Carla~D. Savage, and
  Zafeirakis Zafeirakopoulos, \emph{s-lecture hall partitions, self-reciprocal
  polynomials, and {G}orenstein cones}, Ramanujan J. \textbf{36} (2015),
  no.~1-2, 123--147. \MR{3296715}

\bibitem[BH93]{BrunsHerzog}
Winfried Bruns and J\"urgen Herzog, \emph{Cohen-{M}acaulay rings}, Cambridge
  Studies in Advanced Mathematics, vol.~39, Cambridge University Press,
  Cambridge, 1993. \MR{1251956}

\bibitem[BK14]{BeckKohl}
Matthias Beck and Florian Kohl, \emph{Rademacher-{C}arlitz polynomials}, Acta
  Arith. \textbf{163} (2014), no.~4, 379--393. \MR{3217673}

\bibitem[BP03]{BeckPixton}
Matthias Beck and Dennis Pixton, \emph{The {E}hrhart polynomial of the
  {B}irkhoff polytope}, Discrete Comput. Geom. \textbf{30} (2003), no.~4,
  623--637. \MR{2013976}

\bibitem[BR15]{CCD}
Matthias Beck and Sinai Robins, \emph{Computing the continuous discretely},
  second ed., Undergraduate Texts in Mathematics, Springer, New York, 2015,
  Integer-point enumeration in polyhedra. \MR{3410115}

\bibitem[BZ06]{BeckZaslavsky06}
Matthias Beck and Thomas Zaslavsky, \emph{Inside-out polytopes}, Adv. Math.
  \textbf{205} (2006), no.~1, 134--162. \MR{2254310}

\bibitem[Ciu98]{Ciucu}
Mihai Ciucu, \emph{An improved upper bound for the {$3$}-dimensional dimer
  problem}, Duke Math. J. \textbf{94} (1998), no.~1, 1--11. \MR{1635888}

\bibitem[CLS11]{CLS}
David~A. Cox, John~B. Little, and Henry~K. Schenck, \emph{Toric varieties},
  Graduate Studies in Mathematics, vol. 124, American Mathematical Society,
  Providence, RI, 2011. \MR{2810322}

\bibitem[CR90]{MR1077731}
D.~Cvetkovi\'c and P.~Rowlinson, \emph{The largest eigenvalue of a graph: a
  survey}, Linear and Multilinear Algebra \textbf{28} (1990), no.~1-2, 3--33.
  \MR{1077731}

\bibitem[Ehr62]{Ehrhart}
Eug\`ene Ehrhart, \emph{Sur les poly\`edres rationnels homoth\'etiques \`a
  {$n$}\ dimensions}, C. R. Acad. Sci. Paris \textbf{254} (1962), 616--618.
  \MR{0130860}

\bibitem[FLS07]{Freedman}
Michael Freedman, L\'aszl\'o Lov\'asz, and Alexander Schrijver,
  \emph{Reflection positivity, rank connectivity, and homomorphism of graphs},
  J. Amer. Math. Soc. \textbf{20} (2007), no.~1, 37--51. \MR{2257396}

\bibitem[FP05]{Friedland}
Shmuel Friedland and Uri~N. Peled, \emph{Theory of computation of
  multidimensional entropy with an application to the monomer-dimer problem},
  Adv. in Appl. Math. \textbf{34} (2005), no.~3, 486--522. \MR{2123547}

\bibitem[Huh12]{Huh}
June Huh, \emph{Milnor numbers of projective hypersurfaces and the chromatic
  polynomial of graphs}, J. Amer. Math. Soc. \textbf{25} (2012), no.~3,
  907--927. \MR{2904577}

\bibitem[JVW90]{JaegerVertigan}
F.~Jaeger, D.~L. Vertigan, and D.~J.~A. Welsh, \emph{On the computational
  complexity of the {J}ones and {T}utte polynomials}, Math. Proc. Cambridge
  Philos. Soc. \textbf{108} (1990), no.~1, 35--53. \MR{1049758}

\bibitem[LM08]{LundowMarkstrom}
Per~H{\aa}kan Lundow and Klas Markstr\"om, \emph{Exact and approximate
  compression of transfer matrices for graph homomorphisms}, LMS J. Comput.
  Math. \textbf{11} (2008), 1--14. \MR{2379936}

\bibitem[Mac71]{Macdonald}
I.~G. Macdonald, \emph{Polynomials associated with finite cell-complexes}, J.
  London Math. Soc. (2) \textbf{4} (1971), 181--192. \MR{0298542}

\bibitem[Pom93]{Pommersheim93}
James~E. Pommersheim, \emph{Toric varieties, lattice points and {D}edekind
  sums}, Math. Ann. \textbf{295} (1993), no.~1, 1--24. \MR{1198839}

\bibitem[Sta73]{Stanley18}
Richard~P. Stanley, \emph{Acyclic orientations of graphs}, Discrete Math.
  \textbf{5} (1973), 171--178. \MR{0317988}

\bibitem[Sta86]{StanleyOrderPoly}
\bysame, \emph{Two poset polytopes}, Discrete Comput. Geom. \textbf{1} (1986),
  no.~1, 9--23. \MR{824105}

\bibitem[Sta96]{StanleyGreenBook}
\bysame, \emph{Combinatorics and commutative algebra}, second ed., Progress in
  Mathematics, vol.~41, Birkh\"auser Boston, Inc., Boston, MA, 1996.
  \MR{1453579}

\bibitem[Sta12]{Stanley12}
\bysame, \emph{Enumerative combinatorics. {V}olume 1}, second ed., Cambridge
  Studies in Advanced Mathematics, vol.~49, Cambridge University Press,
  Cambridge, 2012. \MR{2868112}

\bibitem[Stu96]{sturmfels1996}
Bernd Sturmfels, \emph{Gr\"obner bases and convex polytopes}, University
  Lecture Series, vol.~8, American Mathematical Society, Providence, RI, 1996.
  \MR{MR1363949 (97b:13034)}

\bibitem[Tei07]{Teif}
Vladimir~B. Teif, \emph{General transfer matrix formalism to calculate
  dna--protein--drug binding in gene regulation: Application to or operator of
  phage $\lambda$}, Nucleic Acids Research \textbf{35.11} (2007).

\bibitem[Whi32]{Whitneychromatic}
Hassler Whitney, \emph{A logical expansion in mathematics}, Bull. Amer. Math.
  Soc. \textbf{38} (1932), no.~8, 572--579. \MR{1562461}

\bibitem[Yan96]{Yang96}
Winston Yang, \emph{Bell numbers and {$k$}-trees}, Discrete Math. \textbf{156}
  (1996), no.~1-3, 247--252. \MR{1405023}

\bibitem[Zie95]{Ziegler}
G\"unter~M. Ziegler, \emph{Lectures on polytopes}, Graduate Texts in
  Mathematics, vol. 152, Springer-Verlag, New York, 1995. \MR{1311028}

\end{thebibliography}

\end{document}